\documentclass[11pt]{amsart}
\usepackage{latexsym,amscd,amssymb, graphicx, color, amsthm, bm, amsmath, cancel, enumitem, ytableau}  
\usepackage{tikz}
\usepackage[margin=1in]{geometry}
\usepackage{hyperref}
\usepackage[all,cmtip]{xy}
\usepackage{mathtools}
\usepackage{rotating}
\usetikzlibrary{math, arrows.meta}
\usetikzlibrary{calc}
\usepackage[backend=bibtex]{biblatex}
\addbibresource{main.bib}

\numberwithin{equation}{section}

\newtheorem{theorem}{Theorem}[section]
\newtheorem{proposition}[theorem]{Proposition}
\newtheorem{corollary}[theorem]{Corollary}
\newtheorem{lemma}[theorem]{Lemma}
\newtheorem{conjecture}[theorem]{Conjecture}

\newtheorem{problem}[theorem]{Problem}

\newtheorem{remark}[theorem]{Remark}
\newtheorem{definition}[theorem]{Definition}

\theoremstyle{definition}

\newcommand{\Hilb}{{\mathrm{Hilb}}}

\newcommand{\gl}{\mathrm{GL}}
\newcommand{\Frob}{{\mathrm{Frob}}}
\newcommand{\symm}{{\mathfrak{S}}}
\newcommand{\II}{{\mathbf{I}}}
\newcommand{\gr}{{\mathrm {gr}}}

\newcommand{\Res}{\mathrm{Res}}

\newcommand{\grFrob}{{\mathrm{grFrob}}}

\newcommand{\ZZZ}{{\mathcal{Z}}}
\newcommand{\UZ}{{\mathcal{UZ}}}
\newcommand{\SSS}{{\mathcal{S}}}

\newcommand{\MMM}{{\mathcal{M}}}

\newcommand{\RRR}{{\mathcal{R}}}

\newcommand{\CC}{{\mathbb{C}}}

\newcommand{\ZZ}{{\mathbb{Z}}}

\newcommand{\Mat}{{\mathrm{Mat}}}

\newcommand{\SYT}{\mathrm{SYT}}

\newcommand{\spa}{\mathrm{span}}

\newcommand{\xxx}{{\mathbf{x}}}

\newcommand{\mmm}{{\mathfrak{m}}}

\newcommand{\es}{{\mathfrak{es}}}
\newcommand{\ex}{{\mathcal{EX}}}
\newcommand{\eshadow}{{\mathcal{ES}}}

\newcommand{\diag}{<_{\text{Diag}}}
\newcommand{\eqdiag}{\le_{\text{Diag}}}
\newcommand{\ind}{\mathrm{Ind}}

\title{An extension of Viennot's shadow to rook placements via orbit harmonics}
\author{Jasper M. Liu}
\author{Hai Zhu}
\address{Department of Mathematics, UC San Diego, La Jolla, CA, 92093, USA}
\email{(mol008, haz138)@ucsd.edu}
\date{\today}

\begin{document}

\begin{abstract}
    For fixed positive integers $n,m,r$, let $\Mat_{n \times m}(\CC)$ be the affine space of $n \times m$ complex matrices with coordinate ring $\CC[\xxx_{n \times m}]$. We define a homogeneous ideal $I_{n,m,r}$, where the graded quotient $\CC[\xxx_{n \times m}]/I_{n,m,r}$ is obtained from the orbit harmonics deformation of the matrix loci corresponding to all rook placements of size at least $r$. By extending rook placements to elements in $\symm_{n+m-r}$ and applying Viennot's shadow line avatar of the Schensted correspondence, we compute the standard monomial basis of the quotient $\CC[\xxx_{n \times m}]/I_{n,m,r}$ with respect to diagonal monomial orders. We also determine the graded $\symm_n\times\symm_m$-module structure of $\CC[\xxx_{n \times m}]/I_{n,m,r}$.
\end{abstract}

\maketitle

\section{Introduction}\label{sec:intro}
Let $\xxx_N = (x_1,x_2,\dots,x_N)$ be a collection of $N$ variables and let $\CC[\xxx_N] := \CC[x_1,x_2,\dots,x_N]$ be the polynomial ring over these variables. For a finite locus of points $\ZZZ \subseteq \CC^N$, its vanishing ideal is
\begin{equation}
    \II(\ZZZ) :=  \{f \in \CC[\xxx_N]: f(z) = 0 \text{ for all } z \in \ZZZ \}.
\end{equation}
The coordinate ring $\CC[\ZZZ]$ can then be identified with $\CC[\xxx_N]/\II(\ZZZ)$. The method of {\em orbit harmonics} replaces the ideal $\II(\ZZZ)$ with its associated graded ideal $\gr \, \II(\ZZZ)$, extending this identification of (ungraded) vector spaces to
\begin{equation}
    \CC[\ZZZ] \cong \CC[\xxx_N]/\II(\ZZZ) \cong \CC[\xxx_N]/\gr \, \II(\ZZZ) =: R(\ZZZ).
\end{equation}

If $\ZZZ$ is invariant under the action of a finite matrix group $G \subseteq \gl_N(\CC)$, this isomorphism can be treated as an isomorphism of $G$-modules, and $R(\ZZZ)$ has the additional structure as a graded $G$-module.

Geometrically, the method of orbit harmonics linearly deforms the locus $\ZZZ$ to a scheme of multiplicity $|\ZZZ|$ supported at the origin. In the picture below, this deformation is drawn schematically for $\ZZZ$, where $\ZZZ$ is a locus of six points stable under the action of $\symm_3$ generated by reflections across the three lines.

\begin{center}
 \begin{tikzpicture}[scale = 0.2]
\draw (-4,0) -- (4,0);
\draw (-2,-3.46) -- (2,3.46);
\draw (-2,3.46) -- (2,-3.46);

 \fontsize{5pt}{5pt} \selectfont
\node at (0,2) {$\bullet$};
\node at (0,-2) {$\bullet$};

\node at (-1.73,1) {$\bullet$};
\node at (-1.73,-1) {$\bullet$};
\node at (1.73,-1) {$\bullet$};
\node at (1.73,1) {$\bullet$};

\draw[thick, ->] (6,0) -- (8,0);

\draw (10,0) -- (18,0);
\draw (12,-3.46) -- (16,3.46);
\draw (12,3.46) -- (16,-3.46);

\draw (14,0) circle (15pt);
\draw(14,0) circle (25pt);
\node at (14,0) {$\bullet$};

 \end{tikzpicture}
\end{center}

The method of orbit harmonics dates back to Kostant ~\cite{Kostant1963LieGR}. It has seen its application in numerous settings, including cohomology theory ~\cite{GARSIA199282}, Macdonald theory ~\cite{griffin2021orderedsetpartitionsgarsiaprocesi, HAGLUND2018851}, cyclic sieving \cite{Oh-Rhoades}, Donaldson–Thomas invariants ~\cite{Reineke-Rhoades}, and Ehrhart theory ~\cite{reiner2024harmonicsgradedehrharttheory}. In cases where the locus $\ZZZ$ has nice combinatorial structures or symmetries, one can usually expect that the algebraic properties of $R(\ZZZ)$ are governed by the combinatorial properties of $\ZZZ$.

Let $\xxx_{n \times m} = (x_{i,j})_{1 \leq i \leq n , \, 1 \leq j \leq m}$ be an $n \times m$ matrix of variables. Let $\CC[\xxx_{n \times m}]$ be the polynomial ring over these variables and $\Mat_{n\times m} (\CC)$ be the affine space of $n \times m$ complex matrices. The application of orbit harmonics to finite matrix loci $\ZZZ \subseteq \Mat_{n \times m}(\CC)$ was initiated by Rhoades \cite{rhoades2024increasing}. He considered the case $n=m$ with the point locus $\ZZZ = \symm_n \subseteq \Mat_{n \times n}(\CC)$ of $n \times n$ permutation matrices. This permutation matrix locus carries an action of $\symm_n \times \symm_n$ by left and right multiplication. Algebraic properties of $R(\symm_n)$ are governed by the longest increasing subsequences in $\symm_n$ and Viennot's shadow line construction. Liu \cite{Liu} extended this work to the locus $\ZZZ = \ZZ_r \wr \symm_n$ of $r$-colored permutation matrices. Liu, Ma, Rhoades, Zhu ~\cite{Liu-Ma-Rhoades-Zhu} further studied the matrix loci $\ZZZ = \{w \in \symm_n : w^2 = 1 \} \subseteq \Mat_{n \times n} (\CC)$ of involutions and the matrix loci $\ZZZ = \mathcal{PM}_n = \{w \in \MMM_n : w(i) \neq i \text{ for all }i \}$ of fixed-point-free involutions.

In this paper, we consider a matrix locus formed by rook placements. For any positive integer $n$, let $[n]$ be the set of integers $\{1,2,\dots,n\}$. For positive integers $n$ and $m$, a \emph{rook placement} $\RRR$ on the $n\times m$ board is a subset of $[n]\times[m]$ such that $(i,j) \in \RRR$ and $(i',j') \in \RRR$ implies $i \neq i'$ and $j \neq j'$. We call each element $(i,j)\in\RRR$ a \emph{rook} of $\RRR$. Define the \emph{size} $|\RRR|$ of $\RRR$ by
\[|\RRR|\coloneqq \text{the number of rooks of $\RRR$}.\]
Identify each rook placement $\RRR$ on the $n\times m$ board with the $n\times m$ matrix $(a_{i,j})_{1\le i\le n, \,1\le j\le m}$ given by
\[a_{i,j}=\begin{cases}
    1, &\text{if $(i,j)\in\RRR$} \\
    0, &\text{otherwise.}
\end{cases}\]
Write $\ZZZ_{n,m,r}\subseteq\Mat_{n\times m}(\CC)$ for the set of all rook placements on the $n\times m$ board of size $r$, where $r\le\min\{m,n\}$. We are interested in the finite locus of all rook placements with at least $r$ rooks:
\[\UZ_{n,m,r}\coloneqq\bigsqcup_{r^\prime=r}^{\min\{m,n\}}\ZZZ_{n,m,r^\prime}\subseteq\Mat_{n\times m}(\CC).\]

The locus $\UZ_{n,m,r}$ is closed under the action of $\symm_n \times \symm_m$ by row and column permutation, and the quotient $R(\UZ_{n,m,r})$ is a graded $\symm_n \times \symm_m$-module. Our results on this quotient include:
\begin{itemize}
    \item We give an explicit generating set of $\gr \, \II(\UZ_{n,m,r})$. See Definition~\ref{def:ideal} and Theorem~\ref{thm:basis}.
    \item We show (Theorem~\ref{thm:basis}) that $R(\UZ_{n,m,r})$ admits a monomial basis $\es(\RRR)$ indexed by rook placements in $\UZ_{n,m,r}$ (see Definition~\ref{def:shadow-mono}), which is defined by extending each rook placement to a permutation in $\symm_{n+m-r}$ and applying Viennot's shadow construction. We further show that this is the standard monomial basis with respect to diagonal monomial orders (Definition~\ref{def:diag-order}).
    \item We describe the graded $\symm_n \times \symm_m$-module structure of $R(\UZ_{n,m,r})$.
\end{itemize}

The rest of the paper is structured as follows. In Section~\ref{sec:background} we give background material on Gröbner theory, orbit harmonics, and representations of $\symm_n$. In Section~\ref{sec:ideal} we define the ideal $I_{n,m,r}$, and study its algebraic structure. In Section~\ref{sec:basis} we define Viennot's shadow line construction on $\symm_n$ and our extension of this construction to $\UZ_{n,m,r}$. We show that this construction gives the standard monomial basis of $R(\UZ_{n,m,r})$ and prove that the ideal we define in Section~\ref{sec:ideal} is exactly $\gr \, \II(\UZ_{n,m,r})$. In Section~\ref{sec:module} we discuss the structure of the graded $\symm_n \times \symm_m$-module $R(\UZ_{n,m,r})$. We close in Section~\ref{sec:conclusion} with conjectures and possible directions of future research.

\section{Background}\label{sec:background}
\subsection{Gröbner Theory}\label{subsec:grobner}
Let $\xxx_N = (x_1, x_2, \dots x_N)$ be a sequence of $N$ variables, and let $\CC[\xxx_N]$ be the polynomial ring over these variables. A total order $<$ on the set of monomials in $\CC[\xxx_N]$ is called a {\em monomial order} if
\begin{itemize}
    \item $1 \leq m$ for every monomial $m \in \CC[\xxx_N]$
    \item for monomials $m, \, m_1, \, m_2$, we have that $m_1 < m_2$ implies $mm_1 < mm_2$.
\end{itemize}

Let $<$ be a monomial order. For any nonzero polynomial $f \in \CC[\xxx_N]$, the {\em initial monomial} $\text{in}_{<} f$ of $f$ is the largest monomial in $f$ with respect to $<$. Given an ideal $I \subseteq \CC[\xxx_N]$, its {\em initial ideal} is defined to be
\begin{equation}
    \text{in}_{<} I = \langle \text{in}_{<}f : f \in I, f\neq 0 \rangle.
\end{equation}
A monomial $m \in \CC[\xxx_N]$ is called a {\em standard monomial} of $I$ if it is not an element of $\text{in}_{<} I$. It is well known that
\begin{equation}
    \{ m + I : m \text{ is a standard monomial of } I \}
\end{equation}
forms a basis for the vector space $\CC[\xxx_N]/I$. This is called the {\em standard monomial basis}. See \cite{Cox-Little-O'shea} for further details on Gröbner theory.

\subsection{Orbit Harmonics}
Let $\ZZZ \subset \CC^N$ be a finite locus of points in affine $N$-space. The {\em vanishing ideal} $\II(\ZZZ) \subseteq \CC[\xxx_N]$ is the ideal
\begin{equation}
    \II(\ZZZ) = \{ f \in \CC[\xxx_N]: f(z) = 0 \text{ for all } z \in \ZZZ \}. 
\end{equation}

Identifying $\CC[\ZZZ]$ with the vector space of all functions from $\ZZZ$ to $\CC$, we obtain
\begin{equation}\label{eq: locus-vanishing isomorphism}
    \CC[\ZZZ] \cong \CC[\xxx_N]/\II(\ZZZ).
\end{equation}

Given a nonzero polynomial $f \in \CC[\xxx_N]$, write $f = f_0 + f_1 + \dots + f_d$, where $f_i$ is homogeneous of degree $i$ and $f_d \neq 0$. Denote by $\tau(f)$ the top degree homogeneous part of $f$, that is, $\tau(f) = f_d$. For any ideal $I \subseteq \CC[\xxx_N]$, its {\em associated graded ideal} is
\begin{equation}
    \gr \, I = \langle \tau(f):f \in I \rangle.
\end{equation}
\begin{remark}
    Given a set of generators $I = \langle f_1, f_2, \dots, f_r \rangle$, we have the containment of ideals $\langle \tau(f_1),\tau(f_2), \dots, \tau(f_r) \rangle \subseteq \gr \, I$. This containment is strict in general. On the other hand, if $<$ is a monomial order such that $f \leq g$ if and only if $\deg(f) \leq deg(g)$ and $\{ f_1, f_2, \dots, f_r \}$ forms a Gröbner basis of $I$ with respect to $<$, we have the equality $\langle \tau(f_1), \tau(f_2), \dots, \tau(f_r) \rangle = \gr \, I$.
\end{remark}
The isomorphism of ungraded vector spaces in Equation~\eqref{eq: locus-vanishing isomorphism} can be extended to
\begin{equation}\label{eq: orbit-harmmonics}
    \CC[\ZZZ] \cong \CC[\xxx_N]/\II(\ZZZ) \cong \CC[\xxx_N]/\gr \, \II(\ZZZ) =: R(\ZZZ).
\end{equation}
Since the generators of $\gr \, \II(\ZZZ)$ are all homogeneous, $R(\ZZZ)$ has the additional structure of a graded vector space. Furthermore, if $\ZZZ$ is stable under the action of a matrix group $G \subseteq GL_N(\CC)$, Equation ~\eqref{eq: orbit-harmmonics} is also an isomorphism of ungraded $G$-modules, and $R(\ZZZ)$ has the additional structure of a graded $G$-module.

\subsection{Schensted Correspondence}
Let $n$ be any positive integer. A {\em partition} $\lambda$ of $n$ is a sequence of non-increasing positive integers $\lambda = (\lambda_1, \lambda_2, \dots, \lambda_k)$ such that $\sum_{i=1}^k \lambda_i = n$. We write $\lambda \vdash n$ to indicate that $\lambda$ is a partition of $n$.

For a partition $\lambda \vdash n$, its {\em Young diagram} is a diagram with $n$ left-justified boxes, with $\lambda_i$ boxes on the $i$-th row. A {\em standard Young tableau} of shape $\lambda$ is a filling of $[n]$ into the boxes in the Young diagram of $\lambda$, so that the numbers increase across each row and down each column. Below on the left is the Young diagram of the partition $\lambda = (4,2,1) \vdash 7$ and on the right is an example of a standard Young tableau of shape $\lambda$.
\begin{center}   
\begin{ytableau}
    \: & \: & \: & \: \cr
       & \cr
    \:
\end{ytableau} \quad \quad
\begin{ytableau}
    1 & 3 & 5 & 7 \cr
    2 & 4 \cr
    6
\end{ytableau}

\end{center}
Denote by $\SYT(\lambda)$ the set of standard Young tableaux of shape $\lambda$. The {\em Schensted correspondence} \cite{Schensted_1961} is a bijection
\begin{equation}
    \symm_n \xrightarrow{\quad \sim \quad} \bigsqcup_{\lambda \vdash n} \{ (P,Q): P,Q \in \SYT (\lambda)\} 
\end{equation}
that takes a permutation in $\symm_n$ to a pair of standard Young tableaux of the same shape. This bijection is usually achieved through an insertion algorithm, details of which can be found in ~\cite{sagan2013symmetric}. In Section~\ref{subsec:Viennot-shadow}, we will introduce an alternate description of the Schensted correspondence discovered by Viennot~\cite{Viennot}.

\subsection{Representation Theory}


Let $\Lambda = \bigoplus_{n \, \geq \, 0} \Lambda_n$ be the graded algebra of symmetric functions in an infinite set of variables $\xxx = (x_1, x_2, \dots )$ over the ground field $\CC(q)$. For example, the {\em complete homogeneous symmetric function} of degree $n$ is $h_n := \sum_{i_1 \leq \dots \leq i_n} x_{i_1} \dots x_{i_n} \in \Lambda$ and the {\em elementary symmetric function} of degree $n$ is $e_n = \sum_{i_1 < \dots <i_n} x_{i_1} \dots x_{i_n}$. Both  $\{h_1, h_2, \dots \}$ and $\{e_1, e_2, \dots \}$ are algebraically independent generating sets of $\Lambda$.

The degree-$n$ component $\Lambda_n$ of $\Lambda$ has bases indexed by partitions $\lambda \vdash n$. For example, the {\em complete homogeneous basis} of $\Lambda_n$ is $\{h_{\lambda} \, : \, \lambda \vdash n\}$ where $h_{\lambda} = h_{\lambda_1}h_{\lambda_2}\dots$ for $\lambda = (\lambda_1 \geq \lambda_2 \geq \dots)$. We will also need the {\em Schur basis} $\{ s_\lambda \,:\, \lambda \vdash n\}$ (see \cite{Macdonald} for its definition). If $\lambda = (n)$ one has $s_{(n)} = h_n$.

The algebra of {\em doubly symmetric functions} is the tensor product $\Lambda \otimes_{\CC(q)} \Lambda$, which is naturally doubly graded. The Schur basis of the degree $(n,m)$ piece is $\{s_\lambda \otimes s_\mu \, : \, \lambda \vdash n \, , \, \mu \vdash m \}$.

A symmetric function $F\in\Lambda$ is \emph{Schur-positive} if it can be written as
$F = \sum_{\lambda}c_\lambda(q)\cdot s_\lambda$ with coefficients $c_\lambda(q)\in\mathbb{R}_{\ge 0}[q]$. Similarly, a doubly symmetric function $F^\prime\in\Lambda\otimes_{\CC(q)}\Lambda$ is \emph{Schur-positive} if it can be written as $F^\prime = \sum_{\lambda,\mu}c_{\lambda,\mu}(q)\cdot s_\lambda\otimes s_\mu$ with coefficients $c_{\lambda,\mu}\in\mathbb{R}_{\ge 0}[q]$.
For $F,G\in\Lambda$ (resp. $F,G\in\Lambda\otimes_{\CC(q)}\Lambda$), we write $F\le G$ if and only if $G-F$ is Schur-positive.

We introduce the \emph{truncation operator} $\{-\}_P$ where $P$ is a condition that partitions may or may not satisfy. Given $F=\sum_{\lambda}c_{\lambda} s_\lambda \in \Lambda$ with $c_\lambda\in\CC(q)$, we define $\{F\}_P$ by
\[\{F\}_P\coloneqq\sum_{\text{$\lambda$ satisfies $P$}}c_\lambda s_\lambda.\]
Given $G=\sum_{\lambda,\mu}c_{\lambda,\mu}s_\lambda\otimes s_\mu\in\Lambda\otimes_{\CC(q)}\Lambda$ with $c_{\lambda,\mu}\in\CC(q)$ for all partitions $\lambda,\mu$, we similarly define $\{G\}_P$ by
\[\{G\}_P\coloneqq\sum_{\text{$\lambda,\mu$ both satisfy $P$}}c_{\lambda,\mu}s_\lambda\otimes s_\mu.\]
For instance, we have \[\{G\}_{\lambda_1\le L}=\sum_{\substack{\lambda_1\le L \\\mu_1\le L}}c_{\lambda,\mu}s_\lambda\otimes s_\mu.\]

Irreducible representations of $\symm_n$ are indexed by partitions $\lambda \vdash n$. We write $V^{\lambda}$ for the irreducible $\symm_n$-module indexed by $\lambda$. Given an $\symm_n$-module $V$, there exists a unique decomposition $V \cong \bigoplus_{\lambda \vdash n} c_\lambda V^{\lambda}$ with $c_\lambda \geq 0$. The {\em Frobenius image} of $V$ is defined to be the symmetric function
\begin{equation}
    \Frob(V) := \sum_{\lambda \vdash n} c_\lambda s_\lambda,
\end{equation}
obtained by replacing irreducible representation $V^\lambda$ with the Schur polynomial $s_\lambda$. If $V = \bigoplus_{d \geq 0} V _d$ is a graded $\symm_n$-module, the {\em graded Frobenius image} of $V$ is
\begin{equation}
    \grFrob(V,q) := \sum_{d \geq 0} \Frob(V_d) \cdot q^d.
\end{equation}

For integers $n, m \geq 0$, irreducible representations of $\symm_n \times \symm_m$ are of the form $V^\lambda \otimes V^\mu$, where $\lambda \vdash n$ and $\mu \vdash m$. Each $\symm_n \times \symm_m$-module $V$ has a unique decomposition $V \cong \bigoplus c_{\lambda,\mu} V^\lambda \otimes V^{\mu}$ with $c_{\lambda,\mu} \geq 0$, and we extend the definition of Frobenius images to $\symm_n \times \symm_m$-modules by letting
\begin{equation}
    \Frob(V) := \sum_{\substack{\lambda \vdash n \\ \mu \vdash m}} c_{\lambda,\mu} s_\lambda \otimes s_\mu,
\end{equation}
and define the graded Frobenius images for graded $\symm_n \times \symm_m$-modules analogously.

We need the following result about symmetrizers acting on irreducible representations of $\symm_n$. Let $j < n$ be a positive integer and embed $\symm_j \subseteq \symm_n$ by letting it act on the first $j$ letters. This induces an embedding of group algebras $\CC[\symm_j] \subseteq \CC[\symm_n]$. Let $\eta_j \in \CC[\symm_j] \subseteq \CC[\symm_n]$ be the element that symmetrizes over $\symm_j$, i.e.
\begin{equation}\label{eq:eta_j}
    \eta_j = \sum_{w \in \symm_j}w.
\end{equation}
If $V$ is an $\symm_n$-module, $\eta_j$ acts as an operator on $V$, and we have the following lemma which characterizes when $\eta_j$ annihilates irreducible $\symm_n$-modules.

\begin{lemma}\label{lem:ann-length}
    For $j \leq n$ and $\lambda \vdash n$, we have that $\eta_j \cdot V^\lambda \neq 0$ if and only if $\lambda_1 \geq j$.
\end{lemma}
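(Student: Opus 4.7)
The plan is to recognize $\eta_j$ as a scalar multiple of the projector onto $\symm_j$-invariants, so that the question reduces to whether the invariant subspace $(V^\lambda)^{\symm_j}$ is nonzero, and then to evaluate this using the branching rule for $\symm_n \downarrow \symm_j \times \symm_{n-j}$ together with Pieri's rule.

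First, for any representation $W$ of $\symm_j$, the operator $\tfrac{1}{|\symm_j|}\eta_j$ acts as the projection onto $W^{\symm_j}$: if $v \in W^{\symm_j}$ then $\eta_j v = |\symm_j|\cdot v$, and $\eta_j$ annihilates every non-trivial irreducible $\symm_j$-module (the trivial character sums to $|\symm_j|$ on $\eta_j$ while every other irreducible character sums to $0$). Specializing to $W = V^\lambda$ viewed as a $\symm_j$-module via the inclusion $\symm_j \hookrightarrow \symm_n$ on the first $j$ letters, we get $\eta_j \cdot V^\lambda = (V^\lambda)^{\symm_j}$, and it remains to show $(V^\lambda)^{\symm_j}\neq 0$ iff $\lambda_1 \geq j$.

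Next I would enlarge to the Young subgroup $\symm_j \times \symm_{n-j} \subseteq \symm_n$, with $\symm_{n-j}$ acting on $\{j+1,\ldots,n\}$. The branching rule gives
$$\Res^{\symm_n}_{\symm_j \times \symm_{n-j}} V^\lambda \;\cong\; \bigoplus_{\mu \vdash j,\;\nu \vdash n-j} c^{\lambda}_{\mu,\nu}\, V^\mu \otimes V^\nu,$$
where $c^{\lambda}_{\mu,\nu}$ is a Littlewood–Richardson coefficient. Only summands with $\mu=(j)$ (the trivial $\symm_j$-module) contribute to the $\symm_j$-fixed subspace, so $(V^\lambda)^{\symm_j} \neq 0$ iff $c^{\lambda}_{(j),\nu} > 0$ for some $\nu \vdash n-j$. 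By Pieri's rule, $c^{\lambda}_{(j),\nu}$ equals $1$ when $\lambda/\nu$ is a horizontal strip of size $j$ and $0$ otherwise. A horizontal strip contains at most one cell per column of $\lambda$, so its size is bounded by $\lambda_1$, which yields the $\Rightarrow$ direction. For $\Leftarrow$, assuming $\lambda_1 \geq j$, removing the bottom cell of each of the $j$ rightmost columns of $\lambda$ produces a suitable $\nu$.

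The main obstacle is modest and lies entirely in the final combinatorial verification: one must check that excising the bottom cells of the rightmost $j$ columns of $\lambda$ yields a genuine partition $\nu \subseteq \lambda$, which reduces to the observation that the resulting row lengths remain weakly decreasing (the lengths of the rightmost $j$ columns are nonincreasing from left to right). Apart from this small shape check, the argument is a routine assembly of Frobenius reciprocity, the branching rule for $\symm_n \downarrow \symm_j \times \symm_{n-j}$, and Pieri's rule.
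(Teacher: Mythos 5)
Your proof is correct and follows essentially the same route as the paper: both reduce the question to whether the trivial $\symm_j$-isotypic component of $\Res^{\symm_n}_{\symm_j} V^\lambda$ is nonzero and then settle it with a branching rule. The only (cosmetic) difference is that you invoke the Young-subgroup branching rule plus Pieri's rule in one step, while the paper iterates the single-box branching rule $\Res^{\symm_n}_{\symm_{n-1}}$; both correctly identify the criterion $\lambda_1 \geq j$.
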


\begin{proof}
    For any $\symm_n$-module $V^{\lambda}$, we have
    \begin{equation}
        \eta_j \cdot V^{\lambda} = (V^{\lambda})^{\symm_j} = \{v \in V^{\lambda} \, : \, w \cdot v = v  \text{ for all } w \in \symm_j\}.
    \end{equation}
    Thus, $\eta_j \cdot V^{\lambda} \neq 0$ if and only if $(\Res^{\symm_n}_{\symm_j} \,V^{\lambda})^{\symm_j} \neq 0$. This is true if and only if the trivial representation $V^{(j)}$ appears with positive multiplicity in the decomposition of $\Res^{\symm_n}_{\symm_j} \,V^{\lambda}$. The branching rule of $\symm_n$-modules (see~\cite{Macdonald,sagan2013symmetric}) states that
    \begin{equation}\label{eq:branching-rule}
        \Res^{\symm_n}_{\symm_{n-1}} \, V^{\lambda} = \bigoplus_{\substack{\mu \vdash n-1, \\ \mu_i \leq \lambda_i \text{ for all } i}}  V^{\mu},
    \end{equation}
    iterating which implies that $V^{(j)}$ has positive multiplicity in $\Res^{\symm_n}_{\symm_j} \,V^{\lambda}$ if and only if $\lambda_1 \geq j$, completing the proof.
\end{proof}

\section{The ideal $I_{n,m,r}$ and its structure}\label{sec:ideal}

Throughout the remainder of the paper, let $n,m$ be positive integers and $r$ a nonnegative integer with $r \leq \min\{n,m\}$. As in the introduction, $\ZZZ_{n,m,r}$ is the finite locus of rook placements of size $r$ on the $n \times m$ board. We can view $\ZZZ_{n,m,r} \subseteq \Mat_{n \times m}(\CC)$ as the locus of $n \times m$ complex matrices defined by the following conditions:
\begin{itemize}
    \item every entry in the matrix is $0$ or $1$,
    \item there is at most one nonzero entry in each row or column,
    \item there are $r$ ones in total.
\end{itemize}
The locus $\ZZZ_{n,m,r}$ itself is hard to analyze using orbit harmonics. However, it turns out that the locus of {\em upper rook placements}, defined as
\begin{equation}
    \UZ_{n,m,r}\coloneqq\bigsqcup_{r^\prime=r}^{\min\{m,n\}}\ZZZ_{n,m,r^\prime}\subseteq\Mat_{n\times m}(\CC)
\end{equation}
the union of all rook placements of size at least $r$, behaves nicely with the method of orbit harmonics. To study the orbit harmonics quotient $R(\ZZZ_{n,m,r})$, we define the following ideal, which we later prove to be $\gr \, \II(\UZ_{n,m,r})$.
\begin{definition}\label{def:ideal}
    Let $\xxx_{n\times m}$ be an $n\times m$ matrix of variables $(x_{i,j})_{1\le i\le n, 1\le j\le m}$ and consider the polynomial ring $\CC[\xxx_{n\times m}]$ over these variables. Let $I_{n,m,r}\subseteq\CC[\xxx_{n\times m}]$ be the ideal generated by
    \begin{itemize}
        \item any product $x_{i,j}\cdot x_{i,j^\prime} \, (1\le i\le n ; \, 1\le j,j^\prime\le m)$ of variables in the same row,
        \item any product $x_{i,j}\cdot x_{i^\prime,j}\, (1\le i,i'\le n ; \, 1\le j\le m)$ of variables in the same column,
        \item any product $\prod_{k=1}^{n-r+1}\big(\sum_{j=1}^m x_{i_k,j}\big)$ of $n-r+1$ distinct row sums for $1\le i_1<\dots<i_{n-r+1}\le n$, and
        \item any product $\prod_{k=1}^{m-r+1}\big(\sum_{i=1}^n x_{i,j_k}\big)$ of $m-r+1$ distinct column sums for $1\le j_1<\dots<j_{m-r+1}\le m$.
    \end{itemize}
\end{definition}

To study this ideal, we will also need the following definition, which associates a monomial with any rook placement:
\begin{definition}\label{def:monomial}
    For any rook placement $\RRR$ on the $n\times m$ board, the monomial $\mmm(\RRR)\in\CC[\xxx_{n\times m}]$ is defined to be
    \[\mmm(\RRR)\coloneqq\prod_{(i,j)\in\RRR}x_{i,j}.\]
\end{definition}

In the following subsection, we first handle the special case $r=0$, which prepares the reader for the more technical Section~\ref{subsec:ideal-str}.

\subsection{$r=0$ case}\label{subsec: r=0}
When $r=0$, write $\ZZZ_{n,m} := \UZ_{n,m,0}$ for the locus of all rook placements on the $n\times m$ board. The locus $\ZZZ_{n,m}$ is a finite {\em shifted locus}, which was defined and studied by Reiner and Rhoades~\cite[Sec. 3.4]{reiner2024harmonicsgradedehrharttheory}. Lemma~\ref{lem:toy-span} and Proposition ~\ref{prop:toy-basis} below can be derived directly from the results by Reiner and Rhoades, but we still include their proofs as they will promote understanding of later sections. 

Write $I_{n,m} := I_{n,m,0}$ for the ideal generated by
\begin{itemize}
    \item any product $x_{i,j}\cdot x_{i,j^\prime}$ for $1\le i\le n$ and $1\le j,j^\prime\le m$ of variables in the same row, 
    \item any product $x_{i,j}\cdot x_{i^\prime,j}$ for $1\le i, i^\prime \le n$ and $1\le j\le m$ of variables in the same column.
\end{itemize}
Note that the products of row sums and column sums are no longer present as it's impossible to have $n+1$ distinct rows or $m+1$ distinct columns. We start by giving a spanning set of the vector space $\CC[\xxx_{n\times m}]/I_{n,m}$:
\begin{lemma}\label{lem:toy-span}
    The family of monomials $\{\mmm(\RRR)\,:\,\RRR\in\ZZZ_{n,m}\}$ descends to a spanning set of the vector space $\CC[\xxx_{n\times m}]/I_{n,m}$.
\end{lemma}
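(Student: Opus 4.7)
The plan is to reduce the claim to a direct monomial-by-monomial verification, using only the structure of the generators of $I_{n,m}$. Since $\CC[\xxx_{n\times m}]$ is spanned as a vector space by monomials, it suffices to prove that every monomial $m \in \CC[\xxx_{n \times m}]$ is congruent modulo $I_{n,m}$ either to $0$ or to $\mmm(\RRR)$ for some rook placement $\RRR \in \ZZZ_{n,m}$.

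First, I would point out a key feature of the generating set: the row-product generators $x_{i,j} \cdot x_{i,j^\prime}$ are allowed to have $j = j^\prime$ (and similarly for columns), so the squares $x_{i,j}^2$ all lie in $I_{n,m}$. Consequently, any non-squarefree monomial already lies in $I_{n,m}$, and we may restrict attention to squarefree monomials.

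Next, for a squarefree monomial $m = \prod_{(i,j) \in S} x_{i,j}$ with support $S \subseteq [n] \times [m]$, I would split into two cases. If $S$ contains two distinct cells $(i,j)$ and $(i,j^\prime)$ in the same row, then $x_{i,j} \cdot x_{i,j^\prime}$ divides $m$, and since this factor is one of the listed generators, $m \in I_{n,m}$. The same conclusion holds if $S$ contains two distinct cells in the same column. If neither situation occurs, then $S$ has at most one cell in each row and each column, i.e.\ $S$ itself is a rook placement in $\ZZZ_{n,m}$, and $m = \mmm(S)$ by Definition~\ref{def:monomial}. Combining the three cases shows every monomial reduces modulo $I_{n,m}$ either to $0$ or to some $\mmm(\RRR)$.

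There is no real obstacle here; the statement is essentially a combinatorial reading of the generators. The only subtle point worth flagging explicitly in the write-up is the inclusion of the diagonal cases $j = j^\prime$ and $i = i^\prime$ in Definition~\ref{def:ideal}, which is what forces the non-squarefree reduction step and keeps the argument genuinely short.
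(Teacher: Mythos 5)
Your proof is correct and follows essentially the same route as the paper's: observe that every monomial with two variables (counted with multiplicity) in a common row or column lies in $I_{n,m}$, so the surviving monomials are exactly the $\mmm(\RRR)$ for $\RRR\in\ZZZ_{n,m}$. Your explicit note that the generators with $j=j'$ kill the squares $x_{i,j}^2$ is a worthwhile clarification, but it is the same argument.
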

\begin{proof}
    Note that the generators $x_{i,j}\cdot x_{i,j^\prime}$ and $x_{i,j}\cdot x_{i^\prime ,j}$ of $I_{n,m}$ above imply that all the monomials containing variables from the same column or row of $\xxx_{n\times m}$ vanish in the quotient ring $\CC[\xxx_{n\times m}]/I_{n,m}$. Consequently, the monomials with no two variables from the same row or column span $\CC[\xxx_{n\times m}]/I_{n,m}$.
\end{proof}

We next strengthen Lemma~\ref{lem:toy-span} to yield a basis of $\CC[\xxx_{n\times m}]/I_{n,m}$.
\begin{proposition}\label{prop:toy-basis}
    The family of monomials $\{\mmm(\RRR)\,:\,\RRR\in\ZZZ_{n,m}\}$ descends to a  $\CC$-linear basis of $\CC[\xxx_{n\times m}]/I_{n,m}$. Furthermore, we have $\gr \, \II(\ZZZ_{n,m}) = I_{n,m}$.
\end{proposition}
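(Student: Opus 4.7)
The plan is to run the classical orbit-harmonics dimension-squeeze argument. Since $\{\mmm(\RRR):\RRR\in\ZZZ_{n,m}\}$ already spans $\CC[\xxx_{n\times m}]/I_{n,m}$ by Lemma~\ref{lem:toy-span}, it suffices to show that $\dim_\CC \CC[\xxx_{n\times m}]/I_{n,m}\le|\ZZZ_{n,m}|$, because such an inequality together with the spanning statement forces the set to be a basis (and in particular forces $|\ZZZ_{n,m}|$-many monomials to be linearly independent in the quotient).

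To obtain the upper bound on the dimension, I would sandwich $I_{n,m}$ inside $\gr\,\II(\ZZZ_{n,m})$. The key observation is that each of the defining generators of $I_{n,m}$ actually vanishes on $\ZZZ_{n,m}$: a product $x_{i,j}\cdot x_{i,j'}$ is zero on every rook placement because no placement has two rooks in the same row, and similarly for $x_{i,j}\cdot x_{i',j}$. Hence these quadratic generators lie in $\II(\ZZZ_{n,m})$, and because they are already homogeneous their top-degree parts coincide with themselves, giving the containment $I_{n,m}\subseteq\gr\,\II(\ZZZ_{n,m})$. Therefore
\[\dim_\CC \CC[\xxx_{n\times m}]/I_{n,m}\;\ge\;\dim_\CC \CC[\xxx_{n\times m}]/\gr\,\II(\ZZZ_{n,m}).\]
The right-hand side equals $|\ZZZ_{n,m}|$ by the general orbit-harmonics isomorphism~\eqref{eq: orbit-harmmonics}, which identifies $\CC[\xxx_{n\times m}]/\gr\,\II(\ZZZ_{n,m})$ with $\CC[\ZZZ_{n,m}]$ as ungraded vector spaces.

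Combining these gives the chain
\[|\ZZZ_{n,m}|\;=\;\dim_\CC \CC[\xxx_{n\times m}]/\gr\,\II(\ZZZ_{n,m})\;\le\;\dim_\CC \CC[\xxx_{n\times m}]/I_{n,m}\;\le\;|\ZZZ_{n,m}|,\]
where the final inequality is Lemma~\ref{lem:toy-span}. All terms are therefore equal. Equality of the two dimensions on the left forces the containment $I_{n,m}\subseteq\gr\,\II(\ZZZ_{n,m})$ to be an equality (both ideals are homogeneous with the same finite-dimensional quotient), yielding $\gr\,\II(\ZZZ_{n,m})=I_{n,m}$. Equality of the dimensions on the right upgrades the spanning set from Lemma~\ref{lem:toy-span} to a basis.

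There is no real obstacle in this proof beyond checking the generator-vanishing and quoting the orbit-harmonics dimension identity; the only thing to be careful about is that $I_{n,m}$ is manifestly homogeneous, so the equality of $\CC$-dimensions of the graded quotients by $I_{n,m}$ and by $\gr\,\II(\ZZZ_{n,m})$ is enough to conclude the ideals coincide. The more substantial work of identifying a shadow-based standard monomial basis and handling the row/column-sum generators is postponed to the general case $r\ge 0$ in Section~\ref{subsec:ideal-str}.
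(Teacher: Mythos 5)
Your proposal is correct and follows essentially the same dimension-squeeze argument as the paper: verify the quadratic generators vanish on the locus to get $I_{n,m}\subseteq\gr\,\II(\ZZZ_{n,m})$, then combine the resulting surjection with the spanning set from Lemma~\ref{lem:toy-span} to force all inequalities into equalities. The only cosmetic difference is that you phrase the final step via equality of dimensions of quotients by nested ideals rather than via the induced surjection being an isomorphism; both are valid and equivalent.
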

\begin{proof}
Since a rook placement contains no rooks in the same row or column, it is clear that $x_{i,j}\cdot x_{i,j^\prime}\in\II(\ZZZ_{n,m})$ and $x_{i,j}\cdot x_{i^\prime,j}\in\II(\ZZZ_{n,m})$. We thus have $x_{i,j}\cdot x_{i,j^\prime}\in\gr \, \II(\ZZZ_{n,m})$ and $x_{i,j}\cdot x_{i^\prime,j}\in\gr \, \II(\ZZZ_{n,m})$. This implies that $I_{n,m}\subseteq\gr \, \II(\ZZZ_{n,m})$, which naturally induces a linear surjection \[\CC[\xxx_{n\times m}]/I_{n,m}\twoheadrightarrow R(\ZZZ_{n,m}).\]

We now compare dimensions. Lemma~\ref{lem:toy-span} implies that \[\dim_\CC (\CC[\xxx_{n\times m}]/I_{n,m})\le|\ZZZ_{n,m}| = \dim_\CC (R(\ZZZ_{n,m})),\] while the surjection above implies $\dim_\CC (\CC[\xxx_{n\times m}]/I_{n,m})\ge \dim_\CC (R(\ZZZ_{n,m}))$. Hence, all inequalities are equalities, and the surjection above is a linear isomorphism. This forces $I_{n,m} = \gr \, \II(\ZZZ_{n,m})$ and ensures that the spanning set $\{\mmm(\RRR)\,:\,\RRR\in\ZZZ_{n,m}\}$ in Lemma~\ref{lem:toy-span} is a basis.
\end{proof}

The monomial basis from Proposition~\ref{prop:toy-basis} inherits the $\symm_n\times\symm_m$-action on $\ZZZ_{n,m}$, allowing us to compute the graded module structure of $R(\ZZZ_{n,m}) = \CC[\xxx_{n\times m}]/I_{n,m}$.
\begin{proposition}\label{prop:toy-module-str}
    The graded Frobenius image of $\CC[\xxx_{n\times m}]/I_{n,m}$ is given by
    \[\grFrob(\CC[\xxx_{n\times m}]/I_{n,m};q) = \sum_{d=0}^{\min\{m,n\}} q^d \cdot \bigg(\sum_{\mu\vdash d}(s_{\mu}\cdot h_{n-d})\otimes(s_\mu\cdot h_{m-d})\bigg).\]
\end{proposition}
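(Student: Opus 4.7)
The plan is to exploit the monomial basis from Proposition~\ref{prop:toy-basis} to realize each graded piece of $\CC[\xxx_{n\times m}]/I_{n,m}$ as a permutation representation of $\symm_n\times\symm_m$, and then compute its Frobenius image via induction in stages. Since $\mmm(\RRR)$ is homogeneous of degree $|\RRR|$, the degree-$d$ component has basis $\{\mmm(\RRR):\RRR\in\ZZZ_{n,m,d}\}$, and the row/column action of $\symm_n\times\symm_m$ simply permutes these monomials. Hence
\[(\CC[\xxx_{n\times m}]/I_{n,m})_d\;\cong\;\CC[\ZZZ_{n,m,d}]\]
as $\symm_n\times\symm_m$-modules, reducing the problem to computing $\Frob(\CC[\ZZZ_{n,m,d}])$ for each $d$.

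Next I would identify the stabilizer of the base rook placement $\RRR_0=\{(1,1),(2,2),\ldots,(d,d)\}$. A direct check shows that $(\sigma,\tau)\in\symm_n\times\symm_m$ fixes $\RRR_0$ if and only if $\sigma|_{[d]}=\tau|_{[d]}$ (a common element of $\symm_d$), while $\sigma$ and $\tau$ separately permute the complements $\{d+1,\ldots,n\}$ and $\{d+1,\ldots,m\}$. Thus the stabilizer $H_d$ is isomorphic to $\symm_d\times\symm_{n-d}\times\symm_{m-d}$, with the first factor embedded diagonally in $\symm_d\times\symm_d\subseteq\symm_n\times\symm_m$, so that
\[\CC[\ZZZ_{n,m,d}]\;\cong\;\ind_{H_d}^{\symm_n\times\symm_m}\mathbf{1}.\]

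To evaluate this induced module, I would factor it through the larger subgroup $K_d:=(\symm_d\times\symm_{n-d})\times(\symm_d\times\symm_{m-d})$. The inner step $\ind_{\Delta\symm_d}^{\symm_d\times\symm_d}\mathbf{1}$ is the regular representation of $\symm_d$ viewed as a $(\symm_d\times\symm_d)$-bimodule, which by Peter–Weyl decomposes as $\bigoplus_{\mu\vdash d}V^\mu\otimes V^\mu$ (using that irreducibles of $\symm_d$ are self-dual). Tensoring with trivial factors on $\symm_{n-d}$ and $\symm_{m-d}$ gives
\[\ind_{H_d}^{K_d}\mathbf{1}\;\cong\;\bigoplus_{\mu\vdash d}(V^\mu\otimes\mathbf{1})\otimes(V^\mu\otimes\mathbf{1}),\]
and inducing each tensor factor from $\symm_d\times\symm_{n-d}$ up to $\symm_n$ (resp.\ from $\symm_d\times\symm_{m-d}$ up to $\symm_m$) corresponds on the symmetric function side to multiplication in $\Lambda$, with Pieri's rule giving $\Frob(\ind_{\symm_d\times\symm_{n-d}}^{\symm_n}V^\mu\otimes\mathbf{1})=s_\mu\cdot h_{n-d}$. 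Multiplying by $q^d$ and summing over $d=0,\ldots,\min\{n,m\}$ yields the claimed formula. The main conceptual point is recognizing the diagonal $\symm_d$ inside the stabilizer, since that is precisely what produces the factor $\sum_{\mu}s_\mu\otimes s_\mu$; the remaining steps are routine applications of Pieri's rule.
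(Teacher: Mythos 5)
Your proposal is correct and follows essentially the same route as the paper: both identify the degree-$d$ piece with the permutation module $\CC[\ZZZ_{n,m,d}]$, recognize it as $\ind_{(\symm_d\times\symm_{n-d})\times(\symm_d\times\symm_{m-d})}^{\symm_n\times\symm_m}\CC[\symm_d]$ with $\CC[\symm_d]\cong\bigoplus_{\mu\vdash d}V^\mu\otimes V^\mu$ (Artin--Wedderburn in the paper, your Peter--Weyl/diagonal-stabilizer phrasing being equivalent), and finish with Pieri's rule. Your extra step of explicitly computing the stabilizer of $\RRR_0$ and inducing in stages is just a more detailed presentation of the same argument.
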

\begin{proof}
    Note that the monomial basis in Proposition~\ref{prop:toy-basis} is compatible with the $\symm_n\times\symm_m$-action on $\ZZZ_{n,m}$. Therefore, we have an isomorphism of $\symm_n\times\symm_m$-modules:
    
    \[(\CC[\xxx_{n\times m}]/I_{n,m})_d\cong\CC[\ZZZ_{n,m,d}]\]
    for $0\le d\le\min\{m,n\}$, and
    \[(\CC[\xxx_{n\times m}]/I_{n,m})_d = \{0\}\]
    for $d>\min\{m,n\}$.

    It suffices to study the module structure of $\CC[\ZZZ_{n,m,d}]$ for $0\le d \le\min\{m,n\}$. Consider $\symm_{d}\times\symm_{n-d}$ (resp. $\symm_d\times\symm_{m-d}$) as a subgroup of $\symm_n$ (resp. $\symm_m$) by letting $\symm_d$ and $\symm_{n-d}$ (resp. $\symm_{m-d}$) separately act on $\{1,\dots,d\}$ and $\{d+1,\dots,n\}$ (resp. $\{d+1,\dots,m\}$). Thus, we embed $(\symm_d\times\symm_{n-d})\times(\symm_{d}\times\symm_{m-d})$ into $\symm_n\times\symm_m$ as a subgroup. Note that $(\symm_d\times\symm_{n-d})\times(\symm_{d}\times\symm_{m-d})$ acts on $\CC[\symm_d]$ by
    \[((g_1,h_1),(g_2,h_2))\cdot g = g_1 g g_2^{-1}\]
    which restricts to the $\symm_d\times\symm_d$-action. Artin-Wedderburn Theorem gives this $\symm_d\times\symm_d$-module structure by
    \begin{align}\label{eq:artin-wed}
    \CC[\symm_d]\cong\bigoplus_{\mu\vdash d}V^\mu\otimes V^\mu
    \end{align}
    where $\symm_d\times\symm_d$ acts on each $V^\mu\otimes V^\mu$ by
    $(g_1,g_2)\cdot (v_1\otimes v_2) = (g_1\cdot v_1)\otimes(g_2\cdot v_2)$. Then we have that, as $\symm_n\times\symm_m$-modules, 
    \begin{align*}
        \CC[\ZZZ_{n,m,d}] &\cong \ind_{(\symm_d\times\symm_{n-d})\times(\symm_{d}\times\symm_{m-d})}^{\symm_n\times\symm_m}\CC[\symm_d] \\ 
        &\cong \ind_{(\symm_d\times\symm_{n-d})\times(\symm_{d}\times\symm_{m-d})}^{\symm_n\times\symm_m}\bigg(\bigoplus_{\mu\vdash d}(V^\mu\otimes V^{(n-d)})\otimes( V^\mu \otimes V^{(m-d)})\bigg) \\
        &\cong \bigoplus_{\mu\vdash d}\ind_{(\symm_d\times\symm_{n-d})\times(\symm_{d}\times\symm_{m-d})}^{\symm_n\times\symm_m}\big((V^\mu\otimes V^{(n-d)})\otimes( V^\mu \otimes V^{(m-d)})\big) \\
        &\cong \bigoplus_{\mu\vdash d}\Big(\ind_{\symm_d\times\symm_{n-d}}^{\symm_n}(V^\mu\otimes V^{(n-d)})\Big)\otimes\Big(\ind_{\symm_d\times\symm_{m-d}}^{\symm_m}(V^\mu\otimes V^{(m-d)})\Big)
    \end{align*}
    where the second isomorphism arises from Equation~\eqref{eq:artin-wed}. Therefore, the Frobenius image of $\CC[\ZZZ_{n,m,d}]$ as an $\symm_n\times\symm_m$-module is given by
    \begin{equation}\label{eq:graded-component-module-str=rook}
    \Frob(\CC[\ZZZ_{n,m,d}]) = \sum_{\mu\vdash d}(s_\mu\cdot h_{n-d})\otimes(s_{\mu}\cdot h_{m-d})
    \end{equation}
    and hence
    \[\Frob\big((\CC[\xxx_{n\times m}])_d\big) = \sum_{\mu\vdash d}(s_\mu\cdot h_{n-d})\otimes(s_{\mu}\cdot h_{m-d}),\]
    completing our proof.
\end{proof}

\subsection{Structure of $I_{n,m,r}$}\label{subsec:ideal-str}

In order to analyze the standard monomial basis of $\CC[\xxx_{n\times m}]/I_{n,m,r}$ in Section~\ref{sec:basis} and the graded module structure of $\CC[\xxx_{n\times m}]/I_{n,m,r}$ in Section~\ref{sec:module}, we start by studying strategic elements of the ideal $I_{n,m,r}$. On a first reading, it may be better to skip all the technical proofs in this section and accept  Corollary~\ref{cor:ann-of-I_{n,m,r}-extend} on faith.

For induction convenience, we introduce a new ideal $J_{n,m,r}\subseteq I_{n,m,r}$ by removing some generators of $I_{n,m,r}$ in Definition~\ref{def:ideal}:
\begin{definition}\label{def:intermidiate-ideal}
    Let $J_{n,m,r}\subseteq\CC[\xxx_{n\times m}]$ be the ideal generated by
    \begin{itemize}
        \item all products $x_{i,j}\cdot x_{i,j^{\prime}}$ for $1\le i\le n$ and $1\le j,j^\prime\le m$ of variables in the same row, and
        \item all products $\prod_{k=1}^{m-r+1}\big(\sum_{i=1}^n x_{i,j_k}\big)$ of $m-r+1$ distinct column sums for $1\le j_1<\dots<j_{m-r+1}\le m$.
    \end{itemize}
\end{definition}

We start by modifying the products of column sums in Definition~\ref{def:intermidiate-ideal} so they are easier to use:
\begin{lemma}\label{lem:generator-slight-modify}
    If $m-r<t\le m$ and $b_1,\dots,b_t\in[m]$ are distinct integers, then
    \[\sum_{\substack{i_1,\dots,i_t\in[n]\\\text{distinct}}}\prod_{j=1}^t x_{i_j,b_j} \in J_{n,m,r}\]
    summing over all ordered sequences of distinct integers in $[n]$ of length $t$. 
\end{lemma}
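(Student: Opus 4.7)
The plan is to realize the desired expression as a product of column sums, modulo the row-square generators of $J_{n,m,r}$.

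First I would note that by the hypothesis $t > m-r$, we have $t \ge m-r+1$, so the product of the column sums indexed by $b_1,\dots,b_{m-r+1}$ is a generator of $J_{n,m,r}$. Multiplying by the remaining factors $\sum_{i=1}^n x_{i,b_k}$ for $k = m-r+2,\dots,t$ shows that the entire product
\[
\prod_{j=1}^t \Bigl(\sum_{i=1}^n x_{i,b_j}\Bigr)
\]
lies in $J_{n,m,r}$.

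Next I would expand this product: it equals $\sum_{f\colon [t]\to [n]} \prod_{j=1}^t x_{f(j),b_j}$, summed over all functions $f$. The terms with $f$ injective contribute exactly the desired sum $\sum_{i_1,\dots,i_t \text{ distinct}} \prod_j x_{i_j,b_j}$. For each non-injective $f$, there exist indices $j \neq j'$ with $f(j) = f(j') =: i$, and since the $b_j$'s are distinct we have $b_j \neq b_{j'}$; hence $\prod_j x_{f(j),b_j}$ contains the factor $x_{i,b_j}\cdot x_{i,b_{j'}}$, which is a same-row product generator of $J_{n,m,r}$. So every non-injective term lies in $J_{n,m,r}$.

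Subtracting the non-injective contribution from the expanded product, which is already in $J_{n,m,r}$, yields the claim. There is no real obstacle here; the one point to articulate carefully is that the distinctness of the $b_j$'s is what forces collisions of $f$-values to produce genuine same-row products, which is what activates the row-square generators.
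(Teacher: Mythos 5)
Your proof is correct and is essentially the paper's own argument read in the opposite direction: the paper reduces the distinct-index sum modulo the row-product generators to the full product of column sums $\prod_{j=1}^t\bigl(\sum_{i=1}^n x_{i,b_j}\bigr)$ and then invokes $t>m-r$, exactly as you do. No gaps.
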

\begin{proof}
    Since any product of the form $x_{i,j}\cdot x_{i,j^\prime}$ belongs to $J_{n,m,r}$, we have that
    \[\sum_{\substack{i_1,\dots,i_t\in[n]\\\text{distinct}}}\prod_{j=1}^t x_{i_j,b_j} \equiv \sum_{i_1,\dots,i_t \in [n]}\prod_{j=1}^t x_{i_j,b_j} =\prod_{j=1}^t \sum_{i=1}^n x_{i,b_j} \mod{J_{n,m,r}}.\]
    Note that $t>m-r$. Then the generators of $J_{n,m,r}$ of the form $\prod_{k=1}^{m-r+1}\big(\sum_{i=1}^n x_{i,j_k}\big)$ imply that $\prod_{j=1}^t \sum_{i=1}^n x_{i,b_j} \in J_{n,m,r}$, completing the proof.
\end{proof}

We will then prove the most technical result in this section. As we have the $\CC$-algebra identification $\CC[\symm_n\times\symm_m]\cong\CC[\symm_n]\otimes_\CC\CC[\symm_m]$, we write $\eta_p\otimes 1$ for $\sum_{w\in\symm_p}(w,1)$.

\begin{lemma}\label{lem:ideal-induction}
    Let $p$ and $d$ be two integers such that $0\le d\le\min\{m,n\}$ and $n+m-d-r<p\le n$. Then, for any rook placement $\RRR\in\ZZZ_{n,m,d}$, we have
    \[(\eta_p \otimes 1)\cdot\mmm(\RRR)\in J_{n,m,r}.\]
\end{lemma}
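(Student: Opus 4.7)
The plan is to first rewrite $(\eta_p \otimes 1)\cdot \mmm(\RRR)$ as a product of a monomial and a symmetrized factor, then establish a strengthened statement by downward induction on $|B|$, where $B$ is the portion of $\RRR$ in rows $\{p+1,\dots,n\}$.

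First I split $\RRR = A\sqcup B$, where $A$ collects the rooks in rows $\le p$ and $B$ the rooks in rows $> p$. Let $u := |A|$. Since $|B| \le n-p$, the hypothesis $p > n+m-d-r$ gives $u \ge d-(n-p) > m-r$. If $c_1,\dots,c_u$ are the columns of $A$ (listed in some order), then a direct expansion of the symmetrizer yields
\[(\eta_p \otimes 1)\cdot \mmm(\RRR) \;=\; (p-u)!\cdot\mmm(B)\cdot S(A), \qquad S(A) := \sum_{\substack{(j_1,\dots,j_u)\in [p]^u \\ \text{all distinct}}}\, \prod_{k=1}^u x_{j_k, c_k},\]
because each ordered $u$-tuple of distinct elements of $[p]$ is hit by exactly $(p-u)!$ permutations $w\in \symm_p$. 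It therefore suffices to show $\mmm(B)\cdot S(A)\in J_{n,m,r}$.

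I then prove the following strengthening by downward induction on $|B|$: for any rook placement $B$ in rows $\{p+1,\dots,n\}$ with column set $C_B$ and any tuple $A = (c_1,\dots,c_u)$ of distinct columns in $[m]\setminus C_B$ with $u + |B| = d$, we have $\mmm(B)\cdot S(A)\in J_{n,m,r}$. Throughout the induction, $u = d-|B| > m-r$ (because $|B| \le n-p$), so Lemma~\ref{lem:generator-slight-modify} gives $\mmm(B)\cdot T(A)\in J_{n,m,r}$, where $T(A)$ is the analog of $S(A)$ summed over distinct tuples in $[n]^u$. Decomposing $T(A) - S(A)$ by the set $K = \{k : j_k > p\}\ne\emptyset$ and factoring, one gets
\[T(A) - S(A) \;=\; \sum_{\emptyset\ne K\subseteq [u]} \Bigl(\,\sum_{B'} \mmm(B')\,\Bigr)\cdot S(A_{K^c}),\]
where $B'$ ranges over rook placements in rows $\{p+1,\dots,n\}$ with column set $\{c_k : k\in K\}$ and $A_{K^c}$ is the sub-tuple indexed by $K^c$. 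Multiplying by $\mmm(B)$, any $B'$ sharing a row with $B$ contributes a same-row product $x_{i,j}x_{i,j'}$ and lies in $J_{n,m,r}$; the remaining terms are $\mmm(B\cup B')\cdot S(A_{K^c})$ with $|B\cup B'| = |B|+|K| > |B|$ and $|B\cup B'| + |A_{K^c}| = d$, which lie in $J_{n,m,r}$ by the inductive hypothesis. Hence $\mmm(B)\bigl(T(A)-S(A)\bigr)\in J_{n,m,r}$, and combining with $\mmm(B)\cdot T(A)\in J_{n,m,r}$ gives $\mmm(B)\cdot S(A)\in J_{n,m,r}$. The base case $|B|=n-p$ is immediate: $B$ occupies every row in $\{p+1,\dots,n\}$, so no nonempty $B'$ has rows disjoint from $B$, and every term of the above decomposition is a same-row product times something.

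The main obstacle is arranging the induction correctly: the inductive step naturally produces rook placements $B\cup B'$ strictly \emph{larger} than $B$ in the bottom rows, so the induction must run downward in $|B|$ rather than upward. The numerical hypothesis $p > n+m-d-r$ is precisely what guarantees $u > m-r$ at every stage (via $|B|\le n-p$), which in turn keeps Lemma~\ref{lem:generator-slight-modify} applicable throughout.
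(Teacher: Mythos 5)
Your proof is correct. It shares the paper's opening moves---the same expansion of the symmetrizer into $(p-u)!\cdot\mmm(B)\cdot S(A)$, the same appeal to Lemma~\ref{lem:generator-slight-modify} (valid because $u=d-|B|\ge p+d-n>m-r$), and essentially the same decomposition of $T(A)-S(A)$ according to which indices escape $[p]$---but it closes the recursion differently. The paper inducts on $n$: after fixing the out-of-range row values it recognizes each resulting piece as a constant times $(\eta_p\otimes 1)\cdot\mmm(\RRR^\prime)$ on a smaller $(n-s)\times(m-s)$ board with parameter $r-s$, which forces it to introduce the restricted ideals $J^\prime_{n^\prime,m^\prime,r^\prime}$ and to verify separately that $J^\prime_{n^\prime,m^\prime,r^\prime}\cdot\prod_k x_{c_k,b_k}\subseteq J_{n,m,r}$. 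You instead keep the board fixed and run a downward induction on $|B|$, absorbing the newly fixed rooks $B^\prime$ into $B$; the invariants $u+|B|=d$ and $|B|\le n-p$ keep $u>m-r$ at every stage so that Lemma~\ref{lem:generator-slight-modify} always applies, and the base case $|B|=n-p$ is immediate since no nonempty $B^\prime$ can be row-disjoint from $B$. The one point worth stating explicitly in your write-up is that the same-row generators produced when $B^\prime$ meets a row of $B$ really involve two distinct columns, which holds because the columns of $B^\prime$ lie in $[m]\setminus C_B$ by the hypotheses of your strengthened claim. Your scheme trades the paper's board-restriction bookkeeping for a cleaner termination argument on a fixed board, and is arguably the more economical of the two.
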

\begin{proof}
    We prove the lemma by inducting on $n$. The base case $n=1$ is trivial. Assume that Lemma~\ref{lem:ideal-induction} holds for any positive integer $n^\prime<n$, we prove it for $n$.

    Write $\mmm(\RRR) = \prod_{k=1}^d x_{a_k,b_k}$ where $b_1,\dots,b_d\in[m]$ are distinct and $1\le a_1<\dots< a_d\le n$. Since $p+d>n+m-r\ge n$, it follows that $[p]\cap\{a_1,\dots,a_d\}\neq\varnothing$. Therefore, we can choose the largest $t\in[d]$ such that $a_t\in[p]$.

    \textbf{Claim:} $t>m-r$.

    In fact, the maximality of $t$ implies that $[p]\cap\{a_{t+1},\dots,a_d\} = \varnothing$, so we have that $[p]\sqcup\{a_{t+1},\dots,a_d\}\subseteq[n]$ and hence $p+d-t\le n$. Consequently, $t \ge p+d-n > m-r$, which completes the proof of the claim.

    Back to the proof of Lemma~\ref{lem:ideal-induction}. We have that
    \begin{align*}
        &(\eta_p\otimes 1)\cdot\mmm(\RRR) = (\eta_p\otimes 1)\cdot\prod_{k=1}^d x_{a_k,b_k} = (p-t)!\cdot\Bigg(\sum_{\substack{i_1,\dots,i_t\in[p]\\\text{distinct}}}x_{i_1,b_1}\dots x_{i_t,b_t}\Bigg)\cdot\prod_{k=t+1}^d x_{a_k,b_k} \\
        \equiv & (p-t)!\cdot \Bigg(-\sum_{\substack{\text{distinct $i_1,\dots,i_t\in[n]$,}\\ \{i_1,\dots,i_t\}\not\subseteq[p]}}x_{i_1,b_1}\dots x_{i_t,b_t}\Bigg)\cdot\prod_{k=t+1}^d x_{a_k,b_k} \\
        \equiv & (p-t)!\cdot \Bigg(-\sum_{\substack{\text{distinct $i_1,\dots,i_t\in[n]\setminus\{a_{t+1},\dots,a_d\}$,}\\ \{i_1,\dots,i_t\}\not\subseteq[p]}}x_{i_1,b_1}\dots x_{i_t,b_t}\Bigg)\cdot\prod_{k=t+1}^d x_{a_k,b_k} \mod{J_{n,m,r}}
    \end{align*}
    where the first congruence follows from the claim above, and the second from the relation $x_{i,j}\cdot x_{i,j^\prime}\in J_{n,m,r}$. Now it remains to show that
    \begin{align}\label{eq:ideal-induction}
        \Bigg(\sum_{\substack{\text{distinct $i_1,\dots,i_t\in[n]\setminus\{a_{t+1},\dots,a_d\}$,}\\ \{i_1,\dots,i_t\}\not\subseteq[p]}}x_{i_1,b_1}\dots x_{i_t,b_t}\Bigg)\cdot\prod_{k=t+1}^d x_{a_k,b_k} \in J_{n,m,r}.
    \end{align}
    If the summation in \eqref{eq:ideal-induction} is empty, there is nothing to prove. Otherwise, we decompose the summation into sub-summations with smaller index sets as follows. Note that the summation in \eqref{eq:ideal-induction} requires that $\{i_1,\dots,i_t\}\not\subseteq[p]$, so we assume, without loss of generality, that $\{i_1,\dots,i_t\}\setminus[p]=\{i_{t-s+1},\dots,i_t\}$ where $0<s<t$. (Here, we have $s<t$ because $\{i_1,\dots,i_t\}\cap[p]\neq\varnothing$. Otherwise, $\{i_1,\dots,i_t\}\sqcup[p]\sqcup\{a_{t+1},\dots,a_d\}\subseteq[n]$ implies that $t+p+d-t\le n$, i.e. $p\le n-d$. However, $p>n+m-d-r\ge n-d$, a contradiction.) Summing over such $i_1,\dots,i_t$ for fixed $i_{t-s+1}=c_{t-s+1},\dots,i_t=c_t$, we obtain a sub-summation of the summation in \eqref{eq:ideal-induction}. It suffices to show \eqref{eq:ideal-induction} after replacing its summation with these sub-summations, since adding them up yields \eqref{eq:ideal-induction} itself. Consequently, it remains to show that
    \[\Bigg(\sum_{\substack{i_1,\dots,i_{t-s}\in[p]\setminus\{a_{t+1},\dots,a_d\}\\ \text{distinct}}}x_{i_1,b_1}\dots x_{i_{t-s},b_{t-s}}\Bigg)\cdot\prod_{k=t-s+1}^t x_{c_k,b_k}\cdot\prod_{k=t+1}^d x_{a_k,b_k} \in J_{n,m,r}.\] Recall that the choice of $t$ implies that $[p]\cap\{a_{t+1},\dots,a_d\}=\varnothing$, so it remains to show that
    \begin{equation}\label{eq:ideal-induction-final}
        \Bigg(\sum_{\substack{i_1,\dots,i_{t-s}\in[p]\\ \text{distinct}}}x_{i_1,b_1}\dots x_{i_{t-s},b_{t-s}}\Bigg)\cdot\prod_{k=t-s+1}^t x_{c_k,b_k}\cdot\prod_{k=t+1}^d x_{a_k,b_k} \in J_{n,m,r}.
    \end{equation}
    Note that this expression possesses the following form:
    \begin{equation}\label{eq:rewrite}c\cdot((\eta_p\otimes 1)\cdot\mmm(\RRR^\prime))\cdot\prod_{k=t-s+1}^t x_{c_k,b_k}\end{equation} where $c\in\CC$, and $\RRR^\prime$ is a rook placement of size $d^\prime = d-s$ on the sub-board $([n]\setminus\{c_{t-s+1},\dots,c_t\})\times([m]\setminus\{b_{t-s+1},\dots,b_t\})$ of the original board $[n]\times[m]$. For the sake of induction, consider restricting Definition~\ref{def:intermidiate-ideal} to a smaller variable set:
    \begin{itemize}
        \item $n^\prime=n-s$, $m^\prime=m-s$, and $r^{\prime}=r-s$;
        \item the restricted variable matrix given by \[\xxx_{n^\prime,m^\prime}\coloneqq(x_{i,j})_{i\in[n]\setminus\{c_{t-s+1},\dots,c_t\},j\in[m]\setminus\{b_{t-s+1},\dots,b_t\}};\]
        \item the restricted counterpart $J^\prime_{n^\prime,m^\prime,r^\prime}\subseteq\CC[\xxx]$ of $J_{n,m,r}$ generated by
        \begin{itemize}
            \item any product $x_{i,j}\cdot x_{i,j^{\prime}}$ for $i\in[n]\setminus\{c_{t-s+1},\dots,c_t\}$ and $j,j^\prime\in[m]\setminus\{b_{t-s+1},\dots,x_t\}$, and
            \item any product $\prod_{k=1}^{m^\prime-r^\prime+1}(\sum_{i\in[n]\setminus\{c_{t-s+1},\dots,c_t\}}x_{i,j_k})$ for distinct $j_1,\dots,j_{m^\prime-r^\prime+1}\in[m]\setminus\{b_{t-s+1},\dots,b_t\}$.
        \end{itemize}
    \end{itemize}
    Note that $p>n+m-d-r=(n-s)+(m-s)-(d-s)-(r-s)=n^\prime+m^\prime-d^\prime-r^\prime$, and hence the induction assumption reveals that $(\eta_p\otimes 1)\cdot\mmm(\RRR^\prime)\in J_{n^\prime,m^\prime,r^\prime}^\prime$. Therefore, in order to show \eqref{eq:ideal-induction-final} using Expression~\eqref{eq:rewrite}, it suffices to show that
    \[J_{n^\prime,m^\prime,r^\prime}^\prime\cdot\prod_{k=t-s+1}^t x_{c_k,b_k} \subseteq J_{n,m,r}.\] 
    We verify this containment using the generators of $J_{n^\prime,m^\prime,r^\prime}^\prime$ above. For any product $x_{i,j}\cdot x_{i,j^{\prime}}$ for $i\in[n]\setminus\{c_{t-s+1},\dots,c_t\}$ and $j,j^\prime\in[m]\setminus\{b_{t-s+1},\dots,x_t\}$, it is clear that $x_{i,j}\cdot x_{i,j^{\prime}}\cdot\prod_{k=t-s+1}^t x_{c_k,b_k}\in J_{n,m,r}$. For any product $\prod_{k=1}^{m^\prime-r^\prime+1}(\sum_{i\in[n]\setminus\{c_{t-s+1},\dots,c_t\}}x_{i,j_k})$ for distinct $j_1,\dots,j_{m^\prime-r^\prime+1}\in[m]\setminus\{b_{t-s+1},\dots,b_t\}$, $m^\prime-r^\prime+1=(m-s)-(r-s)+1=m-r+1$ implies that
    \begin{align*}
        &\prod_{k=1}^{m^\prime-r^\prime+1}\Bigg(\sum_{i\in[n]\setminus\{c_{t-s+1},\dots,c_t\}}x_{i,j_k}\Bigg)\cdot\prod_{k=t-s+1}^t x_{c_k,b_k} \\ = &\prod_{k=1}^{m-r+1}\Bigg(\sum_{i\in[n]\setminus\{c_{t-s+1},\dots,c_t\}}x_{i,j_k}\Bigg)\cdot\prod_{k=t-s+1}^t x_{c_k,b_k} \\
        \equiv & \prod_{k=1}^{m-r+1}\Bigg(\sum_{i\in[n]}x_{i,j_k}\Bigg)\cdot\prod_{k=t-s+1}^t x_{c_k,b_k} \equiv 0 \mod{J_{n,m,r}}
    \end{align*}
    which completes the proof.
\end{proof}

Let $0\le d\le\min\{m,n\}$ be an integer. For any set $S\subseteq[n]$ and $T\subseteq[m]$ such that $|S|,|T|>n+m-d-r$, let $\symm_S\subseteq\symm_n$ (resp. $\symm_T\subseteq\symm_m$) be the symmetric group of bijections $S \to S$ (resp. $T \to T$). We have that the symmetrizers of $\symm_S$ and $\symm_T$ annihilate $\CC[\xxx_{n \times m}] / I_{n,m,r}$.
\begin{corollary}\label{cor:ann-of-I_{n,m,r}-extend}
     For any rook placement $\RRR\in\ZZZ_{n,m,d}$, we have
    \[\sum_{w\in\symm_S}(w,1)\cdot\mmm(\RRR) \in I_{n,m,r}\]
    and
    \[\sum_{w\in\symm_T}(1,w)\cdot\mmm(\RRR) \in I_{n,m,r}.\]
\end{corollary}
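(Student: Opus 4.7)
The plan is to bootstrap Lemma~\ref{lem:ideal-induction} from the standard symmetrizer $\eta_p\otimes 1$ over rows $\{1,\dots,p\}$ to a general row subset $S\subseteq[n]$ by conjugating with a single row permutation, and then handle the column case $\symm_T$ by a symmetric argument. The key structural input is that the ideal $I_{n,m,r}$ is stable under the $\symm_n\times\symm_m$ action; this is immediate from inspecting the four generator families in Definition~\ref{def:ideal}, since row products and column products are each permuted among themselves under row and column permutations, and $\symm_n\times\symm_m$ permutes the set of row sums and the set of column sums, hence permutes any product of $n-r+1$ distinct row sums or of $m-r+1$ distinct column sums.

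Given $S\subseteq[n]$ with $p\coloneqq|S|$ satisfying $n+m-d-r<p$, I would choose $\sigma\in\symm_n$ with $\sigma([p])=S$, so that conjugation in $\CC[\symm_n]$ yields $\sum_{w\in\symm_S}w=\sigma\,\eta_p\,\sigma^{-1}$. Letting $\RRR^\prime\in\ZZZ_{n,m,d}$ be the rook placement determined by $\mmm(\RRR^\prime)=(\sigma^{-1},1)\cdot\mmm(\RRR)$, this rewriting gives
\[\sum_{w\in\symm_S}(w,1)\cdot\mmm(\RRR)=(\sigma,1)\cdot\bigl((\eta_p\otimes 1)\cdot\mmm(\RRR^\prime)\bigr).\]
Lemma~\ref{lem:ideal-induction} places the inner factor in $J_{n,m,r}\subseteq I_{n,m,r}$, and the stability of $I_{n,m,r}$ under $(\sigma,1)$ established above completes the first claim.

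For the statement about $\symm_T$, the cleanest route is to transpose: the substitution $x_{i,j}\mapsto x_{j,i}$ induces a $\CC$-algebra isomorphism $\CC[\xxx_{n\times m}]\to\CC[\xxx_{m\times n}]$ that sends $I_{n,m,r}$ to $I_{m,n,r}$ generator-for-generator, swapping row products with column products and row sums with column sums (with the bounds $n-r+1$ and $m-r+1$ exchanged), and sends a rook placement to its transpose. Applying Lemma~\ref{lem:ideal-induction} in the transposed setting yields $(1\otimes\eta_p)\cdot\mmm(\RRR)\in I_{n,m,r}$ for every $p$ with $n+m-d-r<p\le m$, after which the same conjugation argument as in the first case extends this to $\symm_T$ for an arbitrary $T\subseteq[m]$ of size $p$. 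The genuine technical work was already completed in Lemma~\ref{lem:ideal-induction}, so the only sticking point here is the bookkeeping of the $\symm_n\times\symm_m$-stability of $I_{n,m,r}$ together with its transposition symmetry; both are immediate from the generating set in Definition~\ref{def:ideal}.
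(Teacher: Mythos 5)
Your proposal is correct and follows essentially the same route as the paper: the paper likewise invokes Lemma~\ref{lem:ideal-induction} for $\eta_p\otimes 1$, appeals to the "analogous" column version, and finishes by noting that $I_{n,m,r}$ is closed under row and column permutations. You have simply made explicit the two steps the paper leaves implicit, namely the conjugation identity $\sum_{w\in\symm_S}w=\sigma\,\eta_p\,\sigma^{-1}$ and the transposition symmetry justifying the column case.
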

\begin{proof}
    write $p = |S|$ and $q=|t|$. Since $J_{n,m,r} \subseteq i_{n,m,r}$, Lemma~\ref{lem:ideal-induction} implies that $(\eta_p \otimes 1)\cdot\mmm(\RRR) \in I_{n,m,r}$. Analogously, we have $(1\otimes\eta_q)\cdot\mmm(\RRR) \in I_{n,m,r}$. 
    The proof is thus complete as $I_{n,m,r}$ is closed under row and column permutations.
\end{proof}

\section{Hilbert series and standard monomial basis}\label{sec:basis}
\subsection{Viennot Shadow}\label{subsec:Viennot-shadow}
We begin this section by describing Viennot's shadow line construction~\cite{Viennot}. Let $w = [w(1),w(2), \dots, w(n)] \in \symm_n$ be a permutation. We draw its diagram on an $n \times n$ grid by putting a point at $(i, w(i))$ for $i \in [n]$. For example, the permutation $w = [6,3,5,7,1,2,8,4] \in \symm_8$ has the digram shown below.
\begin{center}
    \begin{tikzpicture}[x=1.5em,y=1.5em]
    \draw[step=1,black,thin] (0,0) grid (7,7);
    \filldraw [blue] (0,5) circle (2pt);
    \filldraw [blue] (1,2) circle (2pt);
    \filldraw [blue] (2,4) circle (2pt);
    \filldraw [blue] (3,6) circle (2pt);
    \filldraw [blue] (4,0) circle (2pt);
    \filldraw [blue] (5,1) circle (2pt);
    \filldraw [blue] (6,7) circle (2pt);
    \filldraw [blue] (7,3) circle (2pt);
    \end{tikzpicture} 
\end{center}

Place an imaginary light source at the bottom left corner of the diagram, shining northeast. Each point $(i,w(i))$ in the diagram of $w$ blocks the regions to its north and to its east. We take the boundary of the shaded region and call it {\em the first shadow line} $L_1$ (see the left diagram below). Removing all points on the first shadow line and iterating this process yields the {\em second shadow line} $L_2$, {\em third shadow line} $L_3$, and so on (see the center diagram below).
\begin{center}
    \begin{tikzpicture}[x=1.4em,y=1.4em, very thick,color = blue]
    \draw[step=1,black,thin] (0,0) grid (7,7);
    \filldraw [black] (0,5) circle (2pt);
    \filldraw [black] (1,2) circle (2pt);
    \filldraw [black] (2,4) circle (2pt);
    \filldraw [black] (3,6) circle (2pt);
    \filldraw [black] (4,0) circle (2pt);
    \filldraw [black] (5,1) circle (2pt);
    \filldraw [black] (6,7) circle (2pt);
    \filldraw [black] (7,3) circle (2pt);
    \draw(0,7.5)--(0,5)--(1,5)--(1,2)--(4,2)--(4,0)--(7.5,0);
    \node [black] at (8,0) {$L_1$};
    \end{tikzpicture} \quad \quad
    \begin{tikzpicture}[x=1.4em,y=1.4em, very thick,color = blue]
    \draw[step=1,black,thin] (0,0) grid (7,7);
    \filldraw [black] (0,5) circle (2pt);
    \filldraw [black] (1,2) circle (2pt);
    \filldraw [black] (2,4) circle (2pt);
    \filldraw [black] (3,6) circle (2pt);
    \filldraw [black] (4,0) circle (2pt);
    \filldraw [black] (5,1) circle (2pt);
    \filldraw [black] (6,7) circle (2pt);
    \filldraw [black] (7,3) circle (2pt);
    \draw(0,7.5)--(0,5)--(1,5)--(1,2)--(4,2)--(4,0)--(7.5,0);
    \draw(2,7.5)--(2,4)--(5,4)--(5,1)--(7.5,1);
    \draw(3,7.5)--(3,6)--(7,6)--(7,3)--(7.5,3);
    \draw(6,7.5)--(6,7)--(7.5,7);
    \node [black] at (8,0) {$L_1$};
    \node [black] at (8,1) {$L_2$};
    \node [black] at (8,3) {$L_3$};
    \node [black] at (8,7) {$L_4$};
    \end{tikzpicture}
    \quad \quad
    \begin{tikzpicture}[x=1.4em,y=1.4em, very thick,color = blue]
    \draw[step=1,black,thin] (0,0) grid (7,7);
    \filldraw [black] (0,5) circle (2pt);
    \filldraw [black] (1,2) circle (2pt);
    \filldraw [black] (2,4) circle (2pt);
    \filldraw [black] (3,6) circle (2pt);
    \filldraw [black] (4,0) circle (2pt);
    \filldraw [black] (5,1) circle (2pt);
    \filldraw [black] (6,7) circle (2pt);
    \filldraw [black] (7,3) circle (2pt);
    \draw(0,7.5)--(0,5)--(1,5)--(1,2)--(4,2)--(4,0)--(7.5,0);
    \draw(2,7.5)--(2,4)--(5,4)--(5,1)--(7.5,1);
    \draw(3,7.5)--(3,6)--(7,6)--(7,3)--(7.5,3);
    \draw(6,7.5)--(6,7)--(7.5,7);
    \node [black] at (8,0) {$L_1$};
    \node [black] at (8,1) {$L_2$};
    \node [black] at (8,3) {$L_3$};
    \node [black] at (8,7) {$L_4$};
    \filldraw[red] (1,5) circle (2pt)
    (4,2) circle (2pt)
    (5,4) circle (2pt)
    (7,6) circle (2pt);
    \end{tikzpicture}
\end{center}

Suppose that $w \rightarrow (P(w),Q(w))$ under the Schensted correspondence, and that the shadow lines of $w$ are $L_1,L_2,\dots ,L_k$. Viennot proved ~\cite{Viennot} that the entries in the first row of $P(w)$ are the coordinates of the infinite horizontal rays of the shadow lines, and the entries in the first row of $Q(w)$ are the coordinates of the infinite vertical rays. In the example above, we have that the first row of $P(w)$ is $\begin{array}{|c|c|c|c|} 
\hline
 1& 2 & 4 & 8\\
\hline
\end{array}$ and the first row of $Q(w)$ is $\begin{array}{|c|c|c|c|} 
\hline
 1& 3 & 4 & 7\\
\hline
\end{array}$. 
\begin{definition}
     Given a permutation $w \in \symm_n$, the {\em shadow set $\SSS(w)$} of $w$ is the collection of points that lie on the northeast corners of the shadow lines.
\end{definition}
In the above example, $\SSS(w) = \{(2,6),(5,3),(6,5),(8,7)\}$, as depicted in red in the right diagram above. We can iterate the shadow line construction on $\SSS(w)$, as depicted in the diagram below. Viennot proved~\cite{Viennot} that the coordinates of the infinite horizontal rays of the new shadow lines correspond to the second row of $P(w)$, and the $x$ coordinates of the infinite vertical rays correspond to the second row of $Q(w)$. In our example, the second row of $P(w)$ is $\begin{array}{|c|c|c|} 
\hline
 3& 5 & 7 \\
\hline
\end{array}$ and the second row of $Q(w)$ is $\begin{array}{|c|c|c|}
\hline
 2& 6 & 8 \\
\hline
\end{array}$.
\begin{center}
    \begin{tikzpicture}[x=1.4em,y=1.4em, very thick,color = blue]
    \draw[step=1,black,thin] (0,0) grid (7,7);
    \filldraw [black] (1,5) circle (2pt);
    \filldraw [black] (4,2) circle (2pt);
    \filldraw [black] (5,4) circle (2pt);
    \filldraw [black] (7,6) circle (2pt);
    \filldraw [red] (4,5) circle (2pt);
    \draw(1,7.5)--(1,5)--(4,5)--(4,2)--(7.5,2);
    \draw(5,7.5)--(5,4)--(7.5,4);
    \draw(7,7.5)--(7,6)--(7.5,6);
    \end{tikzpicture} \quad \quad
\end{center}

This iterated process produces the iterated shadow set $\SSS(\SSS(w))$, which consists of a single point as drawn in red above. Repeating this process, we obtain a single shadow line and an empty shadow set,
\begin{center}
    
    \begin{tikzpicture}[x=1.4em,y=1.4em, very thick,color = blue]
    \draw[step=1,black,thin] (0,0) grid (7,7);
    \filldraw [black] (4,5) circle (2pt);
    \draw(4,7.5)--(4,5)--(7.5,5);
    \end{tikzpicture}
\end{center}
from which we conclude that $P(w)$ and $Q(w)$ are respectively given by
\\
\begin{center}   
\begin{ytableau}
    1 & 2 & 4 & 8 \cr
    3 & 5 & 7 \cr
    6
\end{ytableau} \quad  \text{and} \quad
\begin{ytableau}
    1 & 3 & 4 & 7 \cr
    2 & 6 & 8\cr
    5
\end{ytableau}

\end{center}
    
From the construction, it is immediate that the shadow set $\SSS(w)$ of any permutation $w$ is a rook placement. However, not all rook placements are shadow sets of permutations. In ~\cite{rhoades2024increasing}, Rhoades described the following algorithm to determine whether a rook placement is the shadow set of some permutation. In the next section, we present a generalized version of this algorithm.
\begin{lemma}\label{lem: shadow-set-dertermine}
    Let $\RRR$ be a rook placement on the $n\times n$ board and apply the Viennot shadow construction to $\RRR$, yielding shadow lines $L_1,\dots,L_p$. Define two sequences $(x_i)_{i=1}^n$ and $(y_j)_{j=1}^m$ over the alphabet $\{-1,0,1\}$ by
    \[x_i=\begin{cases}
        1, &\text{if one of the shadow lines $L_1,\dots,L_p$ has a vertical ray at $x=i$,} \\
        -1, &\text{if the vertical line $x=i$ does not meet $\RRR$,} \\
        0, &\text{otherwise.}
    \end{cases}\]
    and
    \[y_j=\begin{cases}
        1, &\text{if one of the shadow lines $L_1,\dots,L_p$ has a horizontal ray at $y=j$,} \\
        -1, &\text{if the horizontal line $y=j$ does not meet $\RRR$,} \\
        0, &\text{otherwise.}
    \end{cases}\]
    Then $\RRR=\SSS(w)$ is the extended shadow set of some permutation $w \in \symm_n$ if and only if for all $1\le i\le n$ $x_1+x_2+\dots+x_i\le 0$ and $y_1+y_2+\dots+y_j\le 0$.
\end{lemma}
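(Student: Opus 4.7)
The plan is to prove the two directions of the biconditional separately. For the forward direction, I will identify the sequences $(x_i)$ and $(y_j)$ with the first two rows of the Schensted tableaux $Q(w)$ and $P(w)$ via Viennot's theorem, and derive the ballot inequalities from the column-strictness of standard Young tableaux. For the backward direction, I will explicitly construct a permutation $w$ from $\RRR$ whose shadow set is $\RRR$, using the ballot conditions to match $\RRR$'s shadow-line endpoints to its empty rows and columns.

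Forward direction: suppose $\RRR = \SSS(w)$ for some $w \in \symm_n$. First I claim that a column $c$ is a vertical-ray column of some shadow line of $w$ if and only if $c$ is empty in $\RRR$. Indeed, if $c$ is a vertical-ray column of shadow line $L$ of $w$, then the unique rook of $w$ in column $c$ is the leftmost rook of $L$ and produces no NE corner in column $c$; since shadow lines partition the rooks of $w$, no NE corner of any shadow line lies in column $c$, so $c$ is empty in $\RRR$. Conversely, if column $c$ is empty in $\RRR$, the rook of $w$ in column $c$ cannot be a non-leftmost rook of its shadow line (else it would contribute an NE corner in its own column), forcing $c$ to be a vertical-ray column. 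Hence $\{i : x_i = -1\}$ equals the set of vertical-ray columns of $w$, which by Viennot is the first row of $Q(w)$. Applying Viennot one level up to $\SSS(w) = \RRR$, the vertical-ray columns of the shadow lines of $\RRR$, namely $\{i : x_i = +1\}$, are the entries of the second row of $Q(w)$. Column-strictness of the SYT $Q(w)$ between rows $1$ and $2$ is equivalent to $\#\{j \le i : x_j = +1\} \le \#\{j \le i : x_j = -1\}$ for every $i$, which rearranges to $x_1 + \cdots + x_i \le 0$. The argument for $(y_j)$, using $P(w)$, is symmetric.

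Backward direction: assume both ballot inequalities. Enumerate the shadow lines of $\RRR$ as $M_1, \ldots, M_k$, writing the rooks of $M_\ell$ as $(\alpha_{\ell, 1}, \beta_{\ell, 1}), \ldots, (\alpha_{\ell, m_\ell}, \beta_{\ell, m_\ell})$ with $\alpha_{\ell, 1} < \cdots < \alpha_{\ell, m_\ell}$ and $\beta_{\ell, 1} > \cdots > \beta_{\ell, m_\ell}$. Set $a_\ell := \alpha_{\ell, 1}$ and $b_\ell := \beta_{\ell, m_\ell}$; the nested Viennot structure yields $a_1 < \cdots < a_k$ and $b_1 < \cdots < b_k$. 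Let $v_1 < \cdots < v_K$ and $h_1 < \cdots < h_K$ enumerate the empty columns and empty rows of $\RRR$, where $K = n - |\RRR|$. Applying the column ballot inequality at $i = a_\ell$ gives $\ell \le \#\{j \le a_\ell : x_j = -1\}$, so at least $\ell$ empty columns lie in $[1, a_\ell]$; since $a_\ell$ itself is not empty, $v_\ell < a_\ell$, and symmetrically $h_\ell < b_\ell$, for every $\ell \le k$. Now define $w \in \symm_n$ by its rook set: for each $\ell \le k$ include the $m_\ell + 1$ rooks $(v_\ell, \beta_{\ell, 1}), (\alpha_{\ell, 1}, \beta_{\ell, 2}), \ldots, (\alpha_{\ell, m_\ell - 1}, \beta_{\ell, m_\ell}), (\alpha_{\ell, m_\ell}, h_\ell)$, and for each $\ell \in \{k+1, \ldots, K\}$ include the singleton rook $(v_\ell, h_\ell)$. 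A direct count shows each column and each row of $[n]$ appears exactly once, so $w$ is a well-defined permutation.

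The hard part will be verifying that Viennot's shadow construction applied to this $w$ recovers the intended shadow lines $L_1, \ldots, L_K$: that for $\ell \le k$, the $\ell$-th shadow line of $w$ consists of the rooks listed above and has NE corners exactly $\{(\alpha_{\ell, i}, \beta_{\ell, i})\}_i = M_\ell$, and that for $\ell > k$ the $\ell$-th shadow line is the singleton $(v_\ell, h_\ell)$. Granted this, $\SSS(w) = \bigsqcup_\ell M_\ell = \RRR$. I plan to prove it by induction on $\ell$: at each stage, I will show that the prescribed $L_\ell$ is the SW boundary of the rooks of $w$ that lie on $L_\ell \cup L_{\ell+1} \cup \cdots \cup L_K$. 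This reduces to showing that no rook on $L_{\ell'}$ with $\ell' > \ell$ lies strictly SW of any rook on $L_\ell$, which will follow from the inequalities $v_\ell < a_\ell$ and $h_\ell < b_\ell$, the sortedness $v_1 < v_2 < \cdots$ and $h_1 < h_2 < \cdots$, and the nesting $a_1 < a_2 < \cdots$, $b_1 < b_2 < \cdots$ of $\RRR$'s own shadow lines.
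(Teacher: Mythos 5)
Your forward direction is correct and is essentially the paper's own argument (see the proof of Proposition~\ref{prop:equivalent-description-of-shadow-set}, of which this lemma is the case $n=m=r$): the $-1$ positions are the first row of $Q(w)$, the $+1$ positions are the second row, and the ballot inequality is exactly column-strictness between these two rows; likewise for $P(w)$.

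The backward direction, however, rests on a false structural premise: that $w$ can be built so that each shadow line $M_\ell$ of $\RRR$ is the complete set of NE corners of a \emph{single} shadow line of $w$, obtained by prepending the $\ell$-th empty column and appending the $\ell$-th empty row. Counterexample: $n=5$, $\RRR=\{(3,5),(4,2),(5,4)\}$, with shadow lines $M_1=\{(3,5),(4,2)\}$, $M_2=\{(5,4)\}$, empty columns $\{1,2\}$, empty rows $\{1,3\}$; the ballot conditions hold (the prefix sums of $(x_i)$ are $-1,-2,-1,-1,0$ and of $(y_j)$ are $-1,0,-1,0,0$). Your recipe produces $w'=\{(1,5),(3,2),(4,1)\}\cup\{(2,4),(5,3)\}=[5,4,2,1,3]$. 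But $(2,4)$ has no point of $w'$ strictly to its southwest, so it lies on the \emph{first} shadow line of $w'$, which is $\{(1,5),(2,4),(3,2),(4,1)\}$, and one computes $\SSS(w')=\{(2,5),(3,4),(4,2)\}\neq\RRR$. The inequalities $v_\ell<a_\ell$, $h_\ell<b_\ell$ together with the sortedness and nesting you cite do not imply the domination condition your induction needs. Worse, no permutation of your prescribed shape exists for this $\RRR$: the correct permutation is $w=[2,5,4,1,3]$, whose shadow lines are $\{(1,2),(4,1)\}$ and $\{(2,5),(3,4),(5,3)\}$, so a single shadow line of $w$ contributes NE corners to both $M_1$ and $M_2$ --- in general the corners of one shadow line of $w$ are scattered across several shadow lines of $\SSS(w)$. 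The paper's proof of the generalization sidesteps this entirely: it assembles the tableaux $P$ and $Q$ directly (first row from the $-1$ positions, deeper rows by iterating Viennot on $\RRR$ itself), uses the ballot conditions only to check standardness, and then invokes the inverse Schensted correspondence to produce $w$. Your backward direction should be repaired along those lines rather than by an explicit rook-by-rook construction.
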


\subsection{Extended shadow sets}\label{subsec:shadow-set}

Recall that $\UZ_{n,m,r}$ is the set of rook placements with at least $r$ rooks. Given a rook placement $\RRR \in \UZ_{n,m,r}$, we define an algorithm that extends $\RRR$ to a permutation $\ex(\RRR) \in \symm_{n+m-r}$. The shadow set of this permutation, denoted by $\eshadow(R)$, plays a key role in analyzing the orbit harmonics quotient $R(\UZ_{n,m,r})$.

To start, let $\RRR \in \UZ_{n,m,r}$ with $r' \geq r$ rooks. Draw the diagram of the rook placement on the $[n] \times [m]$ grid, with a point at $(i,j)$ if $(i,j) \in R$. Below is the diagram of $\RRR = \{(2,3),(3,4),(5,2),(8,5)\} \in \UZ_{8,6,2}$.  

\begin{center}
    \begin{tikzpicture}[x=1.5em,y=1.5em]
    \draw[step=1,black,thin] (0,0) grid (7,5);
    \filldraw [blue] (1,2) circle (2pt);
    \filldraw [blue] (2,3) circle (2pt);
    \filldraw [blue] (4,1) circle (2pt);
    \filldraw [blue] (7,4) circle (2pt);
    \node [black] at (0,-.4) {\tiny $1$};
    \node [black] at (1,-.4) {\tiny $2$};
    \node [black] at (2,-.4) {\tiny $3$};
    \node [black] at (3,-.4) {\tiny $4$};
    \node [black] at (4,-.4) {\tiny $5$};
    \node [black] at (5,-.4) {\tiny $6$};
    \node [black] at (6,-.4) {\tiny $7$};
    \node [black] at (7,-.4) {\tiny $8$};
    \node [black] at (-.4,0) {\tiny $1$};
    \node [black] at (-.4,1) {\tiny $2$};
    \node [black] at (-.4,2) {\tiny $3$};
    \node [black] at (-.4,3) {\tiny $4$};
    \node [black] at (-.4,4) {\tiny $5$};
    \node [black] at (-.4,5) {\tiny $6$};
    \end{tikzpicture}
\end{center}

We extend the diagram to an $(n+m-r) \times (n+m-r)$ grid by adding $(m-r)$ columns on the left and $(n-r)$ rows on the bottom. The newly added rows and columns are drawn in red in the diagram below. Label the added rows as $\{\overline 1, \overline{2}, \dots , \overline{n-r}\}$ from bottom to top and label the added columns as $\{\overline{1}, \overline{2}, \dots , \overline{m-r}\}$ from left to right, as shown in the diagram.
\begin{center}
    \begin{tikzpicture}[x=1.5em,y=1.5em]
    \draw[step=1,black,thin] (-4,-6) grid (7,5);
    \filldraw [blue] (1,2) circle (2pt);
    \filldraw [blue] (2,3) circle (2pt);
    \filldraw [blue] (4,1) circle (2pt);
    \filldraw [blue] (7,4) circle (2pt);
    \draw[red] (-4,-6)--(7,-6)
    (-4,-5)--(7,-5)
    (-4,-4)--(7,-4)
    (-4,-3)--(7,-3)
    (-4,-2)--(7,-2)
    (-4,-1)--(7,-1)
    (-4,5)--(-4,-6)
    (-3,5)--(-3,-6)
    (-2,5)--(-2,-6)
    (-1,5)--(-1,-6)
    (-4,5)--(-1,5)
    (-4,4)--(-1,4)
    (-4,3)--(-1,3)
    (-4,2)--(-1,2)
    (-4,1)--(-1,1)
    (-4,0)--(-1,0)
    (-4,-1)--(-1,-1)
    (-4,-2)--(-1,-2)
    (-4,-3)--(-1,-3)
    (-4,-4)--(-1,-4)
    (-4,-5)--(-1,-5)
    (-4,-6)--(-1,-6)
    (0,-1)--(0,-6)
    (1,-1)--(1,-6)
    (2,-1)--(2,-6)
    (3,-1)--(3,-6)
    (4,-1)--(4,-6)
    (5,-1)--(5,-6)
    (6,-1)--(6,-6)
    (7,-1)--(7,-6);
    \node [black] at (-4.4,-6) {\tiny $\overline{1}$};
    \node [black] at (-4.4,-5) {\tiny $\overline{2}$};
    \node [black] at (-4.4,-4) {\tiny $\overline{3}$};
    \node [black] at (-4.4,-3) {\tiny $\overline{4}$};
    \node [black] at (-4.4,-2) {\tiny $\overline{5}$};
    \node [black] at (-4.4,-1) {\tiny $\overline{6}$};
    \node [black] at (-4,-6.4) {\tiny $\overline{1}$};
    \node [black] at (-3,-6.4) {\tiny $\overline{2}$};
    \node [black] at (-2,-6.4) {\tiny $\overline{3}$};
    \node [black] at (-1,-6.4) {\tiny $\overline{4}$};
    \node [black] at (0,-6.4) {\tiny $1$};
    \node [black] at (1,-6.4) {\tiny $2$};
    \node [black] at (2,-6.4) {\tiny $3$};
    \node [black] at (3,-6.4) {\tiny $4$};
    \node [black] at (4,-6.4) {\tiny $5$};
    \node [black] at (5,-6.4) {\tiny $6$};
    \node [black] at (6,-6.4) {\tiny $7$};
    \node [black] at (7,-6.4) {\tiny $8$};
    \node [black] at (-4.4,0) {\tiny $1$};
    \node [black] at (-4.4,1) {\tiny $2$};
    \node [black] at (-4.4,2) {\tiny $3$};
    \node [black] at (-4.4,3) {\tiny $4$};
    \node [black] at (-4.4,4) {\tiny $5$};
    \node [black] at (-4.4,5) {\tiny $6$};
    
    \end{tikzpicture} 
\end{center}

For $i \leq r'-r$, we put a point at $(\overline{i},\overline{i})$, as drawn in black in the diagram below. Finally, let $i_1 < i_2 < \dots < i_{n-r'}$ be the indices of columns that do not contain a rook, and let $j_1 < j_2 < \dots < j_{m-r'}$ be the indices of rows that do not contain a rook. For $1 \leq l \leq m-r'$, add a point at $(\overline{r'-r+l}, j_l)$; for $1 \leq k \leq n-r'$, add a point at $(i_k,\overline{r'-r+k})$. These added points are drawn in red in the diagram below.

\begin{center}
    \begin{tikzpicture}[x=1.5em,y=1.5em]
    \draw[step=1,black,thin] (-4,-6) grid (7,5);
    \filldraw [blue] (1,2) circle (2pt);
    \filldraw [blue] (2,3) circle (2pt);
    \filldraw [blue] (4,1) circle (2pt);
    \filldraw [blue] (7,4) circle (2pt);
    \filldraw [black](-4,-6) circle (2pt);
    \filldraw [black](-3,-5) circle (2pt);
    \filldraw [red](-2,0) circle (2pt);
    \filldraw [red](-1,5) circle (2pt);    \filldraw [red](0,-4) circle (2pt);
    \filldraw [red](3,-3) circle (2pt);
    \filldraw [red](5,-2) circle (2pt);
    \filldraw [red](6,-1) circle (2pt);
    \node [black] at (-4.4,-6) {\tiny $\overline{1}$};
    \node [black] at (-4.4,-5) {\tiny $\overline{2}$};
    \node [black] at (-4.4,-4) {\tiny $\overline{3}$};
    \node [black] at (-4.4,-3) {\tiny $\overline{4}$};
    \node [black] at (-4.4,-2) {\tiny $\overline{5}$};
    \node [black] at (-4.4,-1) {\tiny $\overline{6}$};
    \node [black] at (-4,-6.4) {\tiny $\overline{1}$};
    \node [black] at (-3,-6.4) {\tiny $\overline{2}$};
    \node [black] at (-2,-6.4) {\tiny $\overline{3}$};
    \node [black] at (-1,-6.4) {\tiny $\overline{4}$};
    \node [black] at (0,-6.4) {\tiny $1$};
    \node [black] at (1,-6.4) {\tiny $2$};
    \node [black] at (2,-6.4) {\tiny $3$};
    \node [black] at (3,-6.4) {\tiny $4$};
    \node [black] at (4,-6.4) {\tiny $5$};
    \node [black] at (5,-6.4) {\tiny $6$};
    \node [black] at (6,-6.4) {\tiny $7$};
    \node [black] at (7,-6.4) {\tiny $8$};
    \node [black] at (-4.4,0) {\tiny $1$};
    \node [black] at (-4.4,1) {\tiny $2$};
    \node [black] at (-4.4,2) {\tiny $3$};
    \node [black] at (-4.4,3) {\tiny $4$};
    \node [black] at (-4.4,4) {\tiny $5$};
    \node [black] at (-4.4,5) {\tiny $6$};
    \end{tikzpicture} 
\end{center}

It can be easily checked that the diagram obtained this way has exactly one point in each row and each column, hence forming a permutation in $\symm_{n+m-r}$. We write $\ex(\RRR)$ for this {\em extended permutation} arising from $\RRR$, identified with the diagram on the extended board. 
\begin{remark}\label{rmk:extended-increasing}
    There is another way to define the extended permutation $\ex(\RRR)$. Define an ordering of row and column indices by
    \begin{itemize}
        \item if $i<j$ then $\overline{i} < \overline{j}$
        \item $\overline{i} < j$ for all $i,j$.
    \end{itemize}
    $\ex(\RRR)$ is the unique extension of $\RRR$ to a permutation in the $(n+m-r) \times (n+m-r)$ board such that the vertical coordinates of the points in the first $m-r$ columns form an increasing pattern and the horizontal coordinates of the points in the first $n-r$ rows form an increasing pattern.
\end{remark}

 Apply Viennot's shadow line construction to $\ex(R)$. From the construction (and by Remark~\ref{rmk:extended-increasing}), we have that the shadow lines $L_1,L_2,\dots,L_{n-r}$ will have infinite horizontal coordinates $\overline{1}, \overline{2}, \dots, \overline{n-r}$, and the shadow lines $L_1, L_2, \dots, L_{m-r}$ will have infinite vertical coordinates $\overline{1}, \overline{2}, \dots , \overline{m-r}$. Let $\eshadow(\RRR)$ be the shadow set of $\ex(\RRR)$, we call it the {\em extended shadow set} of $\RRR$. See the diagram for an example.

 \begin{center}
    \begin{tikzpicture}[x=1.5em,y=1.5em, very thick,color = blue]
    \draw[step=1,black,thin] (-4,-6) grid (7,5);
    \filldraw [black] (1,2) circle (2pt);
    \filldraw [black] (2,3) circle (2pt);
    \filldraw [black] (4,1) circle (2pt);
    \filldraw [black] (7,4) circle (2pt);
    \filldraw [black](-4,-6) circle (2pt);
    \filldraw [black](-3,-5) circle (2pt);
    \filldraw [black](-2,0) circle (2pt);
    \filldraw [black](-1,5) circle (2pt);
    \filldraw [black](0,-4) circle (2pt);
    \filldraw [black](3,-3) circle (2pt);
    \filldraw [black](5,-2) circle (2pt);
    \filldraw [black](6,-1) circle (2pt);
    \filldraw [red](0,0) circle (2pt);
    \filldraw [red](1,5) circle (2pt);
    \filldraw [red](3,2) circle (2pt);
    \filldraw [red](4,3) circle (2pt);
    \filldraw [red](5,1) circle (2pt);
    \draw  (-4,5.5)--(-4,-6)--(7.5,-6)
    (-3,5.5)--(-3,-5)--(7.5,-5)
    (-2,5.5)--(-2,0)--(0,0)--(0,-4)--(7.5,-4)
    (-1,5.5)--(-1,5)--(1,5)--(1,2)--(3,2)--(3,-3)--(7.5,-3)
    (2,5.5)--(2,3)--(4,3)--(4,1)--(5,1)--(5,-2)--(7.5,-2)
    (6,5.5)--(6,-1)--(7.5,-1)
    (7,5.5)--(7,4)--(7.5,4);
    \node [black] at (-4.4,-6) {\tiny $\overline{1}$};
    \node [black] at (-4.4,-5) {\tiny $\overline{2}$};
    \node [black] at (-4.4,-4) {\tiny $\overline{3}$};
    \node [black] at (-4.4,-3) {\tiny $\overline{4}$};
    \node [black] at (-4.4,-2) {\tiny $\overline{5}$};
    \node [black] at (-4.4,-1) {\tiny $\overline{6}$};
    \node [black] at (-4,-6.4) {\tiny $\overline{1}$};
    \node [black] at (-3,-6.4) {\tiny $\overline{2}$};
    \node [black] at (-2,-6.4) {\tiny $\overline{3}$};
    \node [black] at (-1,-6.4) {\tiny $\overline{4}$};
    \node [black] at (0,-6.4) {\tiny $1$};
    \node [black] at (1,-6.4) {\tiny $2$};
    \node [black] at (2,-6.4) {\tiny $3$};
    \node [black] at (3,-6.4) {\tiny $4$};
    \node [black] at (4,-6.4) {\tiny $5$};
    \node [black] at (5,-6.4) {\tiny $6$};
    \node [black] at (6,-6.4) {\tiny $7$};
    \node [black] at (7,-6.4) {\tiny $8$};
    \node [black] at (-4.4,0) {\tiny $1$};
    \node [black] at (-4.4,1) {\tiny $2$};
    \node [black] at (-4.4,2) {\tiny $3$};
    \node [black] at (-4.4,3) {\tiny $4$};
    \node [black] at (-4.4,4) {\tiny $5$};
    \node [black] at (-4.4,5) {\tiny $6$};
    \node [black] at (8,-6) {$L_1$};
    \node [black] at (8,-5) {$L_2$};
    \node [black] at (8,-4) {$L_3$};
    \node [black] at (8,-3) {$L_4$};
    \node [black] at (8,-2) {$L_5$};
    \node [black] at (8,-1) {$L_6$};
    \node [black] at (8,4) {$L_7$};
    \end{tikzpicture} 
\end{center}
The following lemma is an important fact about $\eshadow(\RRR)$.
\begin{lemma}\label{lem:extended-shadow-upper-right}
    Let $\RRR \in \UZ_{n,m,r}$, we have that $\eshadow(\RRR)$ is also a rook placement in $[n] \times [m]$. That is, the shadow set of $\ex(\RRR)$ is contained in the $n \times m$ box in the upper right corner of the diagram, with column indices in $[n]$ and row indices in $[m]$.
\end{lemma}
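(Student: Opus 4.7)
The plan is to directly analyze the structure of the northeast corners of the shadow lines of $\ex(\RRR)$. Recall that a northeast corner of a shadow line arises between two consecutive diagram points $p_i = (x_i, y_i)$ and $p_{i+1} = (x_{i+1}, y_{i+1})$ on that shadow line, where $x_i < x_{i+1}$ and $y_i > y_{i+1}$; the corner sits at the coordinates $(x_{i+1}, y_i)$. To prove that $\eshadow(\RRR) \subseteq [n] \times [m]$, it suffices to show that $x_{i+1} \in [n]$ (i.e.\ not in an added column $\overline{1}, \ldots, \overline{m-r}$) and $y_i \in [m]$ (i.e.\ not in an added row $\overline{1}, \ldots, \overline{n-r}$) for every such pair.

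Suppose first, toward a contradiction, that $x_{i+1} \in \{\overline{1}, \ldots, \overline{m-r}\}$. Since all added columns precede every original column in the ordering $\overline{1} < \cdots < \overline{m-r} < 1 < \cdots < m$, the inequality $x_i < x_{i+1}$ forces $x_i$ to be an added column of smaller index. Thus both $p_i$ and $p_{i+1}$ are among the diagram points of $\ex(\RRR)$ lying in the first $m-r$ columns. By Remark~\ref{rmk:extended-increasing}, the vertical coordinates of the points in these columns strictly increase from left to right, so $y_i < y_{i+1}$, contradicting $y_i > y_{i+1}$.

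The case in which $y_i \in \{\overline{1}, \ldots, \overline{n-r}\}$ is symmetric. There, $y_{i+1} < y_i$ together with the analogous ordering of rows forces $y_{i+1}$ to be an added row of smaller index, so both $p_i$ and $p_{i+1}$ lie in the first $n-r$ rows of $\ex(\RRR)$. Remark~\ref{rmk:extended-increasing} now says the horizontal coordinates of these points strictly increase as we move upward, which gives $x_i > x_{i+1}$, contradicting $x_i < x_{i+1}$.

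I do not expect a serious obstacle here; the proof is essentially a bookkeeping argument built on the monotonicity in Remark~\ref{rmk:extended-increasing}. The only place one must be careful is tracking the order conventions $\overline{i} < \overline{j}$ for $i < j$ and $\overline{i} < j$ for all $i,j$, which determines that any predecessor of an added-column point (in the $x$-ordering) is again an added-column point, and similarly for added rows. Once this is in hand, each case collapses immediately to a direct comparison of two diagram points in the added region.
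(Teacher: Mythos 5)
Your proof is correct, but it takes a genuinely different route from the paper's. The paper argues through the Schensted correspondence: it first observes (via Remark~\ref{rmk:extended-increasing}) that the shadow lines of $\ex(\RRR)$ have horizontal rays at $\overline{1},\dots,\overline{n-r}$ and vertical rays at $\overline{1},\dots,\overline{m-r}$, so by Viennot's theorem all the barred labels are absorbed into the first rows of $P$ and $Q$; since the coordinates of the shadow set are exactly the entries of $P$ and $Q$ beyond their first rows, the shadow set can only involve unbarred coordinates. You instead work locally on the diagram: every shadow-set point is a northeast corner $(x_{i+1},y_i)$ between consecutive points of a single shadow line, and the ordering convention (barred indices precede unbarred) plus the monotonicity in Remark~\ref{rmk:extended-increasing} shows that such a corner can never have a barred coordinate, since that would force two consecutive shadow-line points to lie in the added strip and hence to increase in both coordinates, contradicting the staircase shape of a shadow line. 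Both arguments ultimately rest on the same monotonicity of the added points, but yours is more elementary and self-contained (no appeal to RSK or to Viennot's theorem on the rows of $P$ and $Q$), while the paper's sets up machinery it reuses immediately afterward in the proof of Proposition~\ref{prop:equivalent-description-of-shadow-set}. Either proof is acceptable; your local corner analysis is complete because a shadow line's northeast corners are exactly the points $(x_{i+1},y_i)$ for consecutive diagram points, and lines with a single point contribute nothing to the shadow set.
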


\begin{proof}
    Let $(P,Q)$ be the pair of tableaux corresponding to $\ex(\RRR)$ in the Schensted correspondence. We adopt the notion in Remark~\ref{rmk:extended-increasing} so the content of $P$ is $\{\overline{1},\overline{2}, \dots,\overline{n-r}, 1,2,\dots m\}$ and the content of $Q$ is $\{ \overline{1}, \overline{2}, \dots, \overline{m-1}, 1,2, \dots n\}$. According to Viennot's construction in ~\cite{Viennot}, we have that $\overline{1},\overline{2},\dots,\overline{n-r}$ are in the first row of $P$ as they are the coordinates of the infinite horizontal rays of shadow lines $L_1,L_2,\dots,L_{n-r}$. Similarly, $\overline{1},\overline{2},\dots,\overline{m-r}$ are in the first row of $Q$. As the horizontal coordinates of points in the shadow set are entries in $P$ beyond the first row, and the vertical coordinates of points in the shadow set are entries in $Q$ beyond the shadow set, we conclude that $\eshadow(\RRR) = \SSS(\ex(\RRR)) \subseteq [n] \times [m]$.
\end{proof}

Given a rook placement $\RRR \subseteq [n] \times [m]$, the following proposition generalizes~\ref{lem: shadow-set-dertermine}. It characterizes whether $\RRR$ is the extended shadow set of some other rook placement $\tilde{\RRR}$ with size at least $r$.

\begin{proposition}\label{prop:equivalent-description-of-shadow-set}
    Let $\RRR$ be a rook placement on the $n\times m$ board and apply the Viennot shadow construction to $\RRR$, yielding shadow lines $L_1,\dots,L_p$. Define two sequences $(x_i)_{i=1}^n$ and $(y_j)_{j=1}^m$ over the alphabet $\{-1,0,1\}$ by
    \[x_i=\begin{cases}
        1, &\text{if one of the shadow lines $L_1,\dots,L_p$ has a vertical ray at $x=i$,} \\
        -1, &\text{if the vertical line $x=i$ does not meet $\RRR$,} \\
        0, &\text{otherwise.}
    \end{cases}\]
    and
    \[y_j=\begin{cases}
        1, &\text{if one of the shadow lines $L_1,\dots,L_p$ has a horizontal ray at $y=j$,} \\
        -1, &\text{if the horizontal line $y=j$ does not meet $\RRR$,} \\
        0, &\text{otherwise.}
    \end{cases}\]
    Then $\RRR=\eshadow(\tilde{\RRR})$ is the extended shadow set of some rook placement $\tilde{\RRR}\in\UZ_{n,m,r}$ if and only if for all $1\le i\le n$ and all $1\le j\le m$ we have $x_1+x_2+\dots+x_i\le m-r$ and $y_1+y_2+\dots+y_j\le n-r$.
\end{proposition}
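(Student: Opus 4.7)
The plan is to reduce the proposition to Lemma~\ref{lem: shadow-set-dertermine} by embedding the rook placement $\RRR$ into the extended $(n+m-r)\times(n+m-r)$ board and comparing partial sums. Since Viennot's shadow construction depends only on the relative positions of points with respect to the southwest light source, the shadow lines $L_1,\ldots,L_p$ of $\RRR$ are identical whether we view $\RRR$ inside the original $n\times m$ board or inside the extended board. On the extended board, each added column $\overline{1},\ldots,\overline{m-r}$ and added row $\overline{1},\ldots,\overline{n-r}$ is empty of rooks and carries no shadow ray (the rays have coordinates in $[n]$ or $[m]$), so each of these indices contributes $-1$ to the corresponding Lemma~\ref{lem: shadow-set-dertermine} partial sums. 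Those sums are therefore automatic through the last added index, and through original column $i$ (respectively row $j$) reduce to $x_1+\cdots+x_i\le m-r$ (respectively $y_1+\cdots+y_j\le n-r$). Thus our condition holds iff $\RRR=\SSS(w)$ for some $w\in\symm_{n+m-r}$.

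The forward direction is then immediate by taking $w=\ex(\tilde{\RRR})$. For the converse, given $w\in\symm_{n+m-r}$ with $\SSS(w)=\RRR\subseteq[n]\times[m]$, I set $\tilde{\RRR}:=w\cap([n]\times[m])$. A count using the one-point-per-row/column property of $w$ gives $|\tilde{\RRR}|=r+c$, where $c$ is the number of points of $w$ in the corner region $\{\overline{1},\ldots,\overline{m-r}\}\times\{\overline{1},\ldots,\overline{n-r}\}$; since $c\ge 0$, we obtain $\tilde{\RRR}\in\UZ_{n,m,r}$. By Remark~\ref{rmk:extended-increasing}, to conclude $w=\ex(\tilde{\RRR})$ it suffices to prove (a) the $y$-coordinates of $w$'s points in columns $\overline{1},\ldots,\overline{m-r}$ increase with $x$-coordinate, and (b) symmetrically, the $x$-coordinates of $w$'s points in rows $\overline{1},\ldots,\overline{n-r}$ increase with $y$-coordinate.

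The main obstacle is proving (a); claim (b) then follows by transposition. I argue by contradiction. Suppose there is a first consecutive descent $w(\overline{a})>w(\overline{a+1})$, and set $p_l:=(\overline{l},w(\overline{l}))$. Minimality of $a$ forces $w(\overline{1})<\cdots<w(\overline{a})$, and an induction on $l\le a$ shows each $p_l$ is the leftmost point of $L_l$: after $L_1,\ldots,L_{l-1}$ are removed, $p_l$ has the smallest remaining $x$-coordinate, and it was blocked from joining any earlier $L_k$ by the still-present $p_k,\ldots,p_{l-1}$. Let $s$ be the largest index with $w(\overline{s})<w(\overline{a+1})$, setting $s=0$ if none exists. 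The SW-blockers of $p_{a+1}$ are then precisely $p_1,\ldots,p_s$ (original-column points have $x\ge 1>\overline{a+1}$ and cannot be SW of $p_{a+1}$), so after removing $L_1,\ldots,L_s$ the point $p_{a+1}$ is minimal and therefore lies on $L_{s+1}$. The intermediate points $p_{s+2},\ldots,p_a$ remain blocked by $p_{s+1}$ (since $w(\overline{l})>w(\overline{s+1})$ for $s+1<l\le a$), and no original column has $x$-coordinate in the interval $(\overline{s+1},\overline{a+1})$, so $p_{s+1}$ and $p_{a+1}$ are consecutive on $L_{s+1}$ in the $x$-direction. Consequently, the NE corner $(\overline{a+1},w(\overline{s+1}))$ belongs to $\SSS(w)$, contradicting $\overline{a+1}\notin[n]$ and the hypothesis $\SSS(w)\subseteq[n]\times[m]$.
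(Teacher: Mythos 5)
Your proof is correct, and it takes a genuinely different route from the paper's. The paper argues through the Schensted tableaux: for the forward direction it reads the prefix-sum bounds off the standardness of the first two rows of $P$ and $Q$ for $\ex(\tilde{\RRR})$, and for the converse it builds a pair of tableaux from Viennot's construction on $\RRR$, checks standardness via the prefix sums, and inverts the correspondence to recover $w=\ex(\tilde{\RRR})$. You instead reduce both directions to Lemma~\ref{lem: shadow-set-dertermine} by embedding $\RRR$ in the extended board and noting that each added (empty, ray-free) row or column contributes $-1$, which shifts the ballot bound from $0$ to $m-r$ (resp.\ $n-r$); this cleanly converts the proposition into the statement that $\RRR=\SSS(w)$ for some $w\in\symm_{n+m-r}$. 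The real content you then supply is the geometric claim that \emph{any} such $w$ with $\SSS(w)\subseteq[n]\times[m]$ automatically satisfies the increasing conditions of Remark~\ref{rmk:extended-increasing}: your descent argument correctly identifies $p_1,\dots,p_a$ as the leftmost points of $L_1,\dots,L_a$, places $p_{a+1}$ on $L_{s+1}$ adjacent to $p_{s+1}$, and produces a northeast corner at $(\overline{a+1},w(\overline{s+1}))$ outside the top-right region, a contradiction. What your approach buys is a reuse of the already-quoted square-board lemma plus a slightly stronger byproduct (every permutation of the extended board whose shadow set lies in $[n]\times[m]$ is of the form $\ex(\tilde{\RRR})$ with $\tilde{\RRR}=w\cap([n]\times[m])$, of size $r+c\ge r$); what the paper's tableau argument buys is that the ballot condition and the reconstruction of $w$ come packaged together from the Schensted correspondence, with no separate geometric lemma needed.
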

\begin{proof}
    Suppose $\RRR = \eshadow(\tilde{\RRR})$ for some $\tilde{\RRR} \in \UZ_{n,m,r}$. Let $\ex(\tilde{\RRR})$ be the extended permutation of $\tilde{\RRR}$. Using the same notion as in the proof of Lemma~\ref{lem:extended-shadow-upper-right}, if $\ex(\tilde{\RRR}) \mapsto (P,Q)$ under the Schensted correspondence, the horizontal rays of $L_1,L_2,\dots,L_p$ give the second row of $P$ and the vertical rays of $L_1,L_2,\dots,L_p$ give the second row of $Q$. The $y$-coordinates that do not appear in $\RRR$ together with $\overline{1},\overline{2},\dots,\overline{n-r}$ form the first row of $P$, where the first $n-r$ entries are $\overline{1},\overline{2},\dots,\overline{n-r}$. Similarly, the $x$-coordinates that do not appear in $\RRR$ together with $\overline{1},\overline{2}, \dots, \overline{m-r}$ form the first row of $Q$, and the first $m-r$ entries are $\overline{1},\overline{2}, \dots, \overline{m-r}$. Since $P$ and $Q$ are standard, we must have all prefix sums of the sequence $x_1x_2 \dots x_n$ are at most $m-r$ and all prefix sums of the sequence $y_1y_2 \dots y_m$ are at most $n-r$.

    Now assume that all prefix sums of $x_1 x_2 \dots x_n$ are at most $m-r$ and all prefix sums of $y_1y_2 \dots y_m$ are at most $n-r$. Embed $\RRR$ in top right corner of the $(n+m-r) \times (n+m-r)$ board, and apply Viennot's shadow construction to the set $R$ to get a pair $(P',Q')$ of partial standard Young tableaux where entries of $P'$ are the $x$ coordinates of points in $\RRR$ and entries in $Q'$ are the $y$ coordinates of points in $\RRR$. Add a first row consisting of entries $\overline{1},\overline{2},\dots,\overline{n-r}$ together with $y$ coordinates not appearing in $\RRR$ to $P'$, and add a first row consisting of entries $\overline{1},\overline{2},\dots,\overline{n-r}$ together with $x$ coordinates not appearing in $\RRR$ to $Q'$. The pair of tableaux $(P,Q)$ obtained this way is standard by the conditions on prefix sums. Consider the diagram of the permutation $w \in \symm_{n+m-r}$ on the $(n+m-r) \times (n+m-r)$ board. From our construction of the first row of $P$ and $Q$, it must be the case that the vertical coordinates of the points in the first $m-r$ columns form an increasing pattern and the horizontal coordinates of the points in the first $n-r$ rows form an increasing pattern. Hence, let $\tilde{\RRR}$ be the rook placement in the upper-right $n \times m$ board in the diagram of $w$, it must be the case that $w = \ex(\tilde{\RRR})$ and that $\RRR = \eshadow (\tilde{\RRR})$ by Remark~\ref{rmk:extended-increasing}.
\end{proof}

\subsection{Extended shadow monomials and spanning}\label{subsec:shadow-mono-span}
Our next goal is to obtain a basis for $\CC[\xxx_{n\times m}]/I_{n,m,r}$ using the combinatorics of the previous section.

\begin{lemma}\label{lem:rook-span}
    The family of monomials $\{\mmm(\RRR)\,:\,\RRR\in\bigsqcup_{d=0}^{\min\{m,n\}}\ZZZ_{n,m,d}\}$ descends to a spanning set of $\CC[\xxx_{n\times m}]/I_{n,m,r}$.
\end{lemma}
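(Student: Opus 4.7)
The plan is to observe that this statement is essentially immediate from the $r=0$ case treated in Section~\ref{subsec: r=0}. The key point is that the ideal $I_{n,m,r}$ contains every generator of the smaller ideal $I_{n,m}$: indeed, both the row-repetition products $x_{i,j}\cdot x_{i,j^\prime}$ and the column-repetition products $x_{i,j}\cdot x_{i^\prime,j}$ appear in the generating set from Definition~\ref{def:ideal}. Hence $I_{n,m}\subseteq I_{n,m,r}$, and the identity on $\CC[\xxx_{n\times m}]$ induces a canonical surjection of quotient rings
\[
\CC[\xxx_{n\times m}]/I_{n,m}\twoheadrightarrow\CC[\xxx_{n\times m}]/I_{n,m,r}.
\]

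Next I would invoke Lemma~\ref{lem:toy-span}, which already established that the monomials $\{\mmm(\RRR)\,:\,\RRR\in\ZZZ_{n,m}\}$ span the domain of the displayed surjection. Since $\ZZZ_{n,m}=\UZ_{n,m,0}=\bigsqcup_{d=0}^{\min\{m,n\}}\ZZZ_{n,m,d}$, the image of this spanning set under the surjection is precisely the family indexed in the statement of Lemma~\ref{lem:rook-span}. Because a surjective linear map carries a spanning set to a spanning set, this finishes the proof.

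If one prefers a self-contained argument that does not formally invoke the $r=0$ case, the same reasoning can be run directly: any monomial in $\CC[\xxx_{n\times m}]$ that contains two variables from a common row or common column is divisible by one of the first two families of generators of $I_{n,m,r}$, hence vanishes in the quotient; the surviving monomials are exactly those of the form $\mmm(\RRR)$ where no two variables share a row or column, i.e.\ the rook monomials. There is no real obstacle here, as the products of row sums and column sums from Definition~\ref{def:ideal} play no role at this stage, they will only be needed when we later refine this spanning set to a basis indexed by extended shadow sets via the combinatorics of Section~\ref{subsec:shadow-set}.
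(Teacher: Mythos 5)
Your proposal is correct and, in its self-contained form, is exactly the paper's proof: the quadratic generators $x_{i,j}x_{i,j'}$ and $x_{i,j}x_{i',j}$ kill every monomial with two variables in a common row or column, so the rook monomials span the quotient. The extra framing via the surjection $\CC[\xxx_{n\times m}]/I_{n,m}\twoheadrightarrow\CC[\xxx_{n\times m}]/I_{n,m,r}$ and Lemma~\ref{lem:toy-span} is a harmless repackaging of the same observation.
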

\begin{proof}
    This follows immediately since $I_{n,m,r}$ contains all the products $x_{i,j}\cdot x_{i,j^\prime}$ and $x_{i,j}\cdot x_{i^\prime,j}$ of variables in the same row or column.
\end{proof}

However, the spanning set given by Lemma~\ref{lem:rook-span} is far from a basis. To extract a basis from this spanning set, we introduce a class of special monomial orders, namely the \emph{diagonal monomial orders}. Recall that the \emph{lexicographical order} in an ordered set of variables $y_1>y_2>\dots>y_N$ is given by $y_1^{a_1}\dots y_N^{a_N} < y_1^{b_1}\dots y_N^{b_N}$ if and only if there exists an integer $1\le M\le N$ such that $a_i=b_i$ for $1\le i <M$ and $a_M<b_M$.

\begin{definition}\label{def:diag-order}
    Observe that any total order $<$ on the variable set $\xxx_{n\times m}$ induces a lexicographical order on the monomials in $\CC[\xxx_{n \times m}]$. Such a monomial order is called {\em diagonal} if the total order $<$ satisfies $x_{i,j} < x_{i^\prime,j^\prime}$ whenever $i+j<i^\prime+j^\prime$.
\end{definition}

It is easy to construct a diagonal monomial order. For instance, the variable order
\[x_{1,1}>x_{2,1}>x_{1,2}>x_{3,1}>x_{2,2}>x_{1,3}>\dots>x_{n,m-1}>x_{n-1,m}>x_{n,m}\]
induces a diagonal monomial order.

From now on, we fix a diagonal monomial order $\diag$ on the set of monomials in $\CC[\xxx_{n\times m}]$.

\begin{definition}\label{def:shadow-mono}
    Let $\RRR\in\UZ_{n,m,r}$. The \emph{extended shadow monomial} $\es(\RRR)\in\CC[\xxx_{n\times m}]$ is defined as $\es(\RRR)\coloneqq\mmm(\eshadow(\RRR))$.
\end{definition}

We now prove the last technical lemma to obtain a smaller spanning set. 
\begin{lemma}\label{lem:linear-relation-induction}
    For $0\le d\le\min\{m,n\}$ and a rook placement $\RRR\in\ZZZ_{n,m,d}$ that is not the extended shadow set of any rook placement in $\UZ_{n,m,r}$, we have
    \[\mmm(\RRR)\in\spa\{\mmm(\RRR^{\prime})\,:\,\RRR^{\prime}\in\ZZZ_{n,m,d},\,\mmm(\RRR^{\prime})\diag\mmm(\RRR)\} + I_{n,m,r}.\]
\end{lemma}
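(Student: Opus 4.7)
My plan is to produce, via Corollary~\ref{cor:ann-of-I_{n,m,r}-extend}, an element of $I_{n,m,r}$ whose unique $\diag$-leading monomial is $\mmm(\RRR)$, and then solve for $\mmm(\RRR)$ in that relation. Since $\RRR$ is not an extended shadow set of any element of $\UZ_{n,m,r}$, Proposition~\ref{prop:equivalent-description-of-shadow-set} asserts that either some prefix $x_1+\cdots+x_i$ exceeds $m-r$ or some prefix $y_1+\cdots+y_j$ exceeds $n-r$. By swapping the roles of rows and columns, I may assume the former (the other case is entirely analogous, using the row symmetrizer of Corollary~\ref{cor:ann-of-I_{n,m,r}-extend} instead of the column symmetrizer below).

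The main step is to construct a subset $T\subseteq[m]$ satisfying
\begin{itemize}
    \item[(i)] $|T|>n+m-d-r$;
    \item[(ii)] the rooks of $\RRR$ lying in columns of $T$, say $(i_1,j_1),\dots,(i_v,j_v)$ with $j_1<\dots<j_v$, also satisfy $i_1<\dots<i_v$; and
    \item[(iii)] the columns $\{j_1,\dots,j_v\}$ are precisely the $v$ smallest columns of $T$---equivalently, every column of $T$ lying to the right of $j_v$ is empty in $\RRR$.
\end{itemize}
The existence of such a $T$ is the combinatorial heart of the proof: the failure of the prefix sum at index $i$ gives structural information about the Viennot shadow lines meeting the first $i$ columns, which should be used to extract an increasing configuration of rook columns ``packed to the left'' of enough non-rook columns, after which $T$ is taken to be this configuration's columns together with sufficiently many non-rook columns strictly to its right. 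This is the step I expect to be the main obstacle, since one must leverage the precise quantitative excess $x_1+\cdots+x_i-(m-r)\geq 1$ to simultaneously ensure (i) and (ii)--(iii).

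Given such $T$, Corollary~\ref{cor:ann-of-I_{n,m,r}-extend} yields $\sum_{w\in\symm_T}(1,w)\cdot\mmm(\RRR)\in I_{n,m,r}$. Each summand equals $\mmm(\tilde{\RRR})$ for some rook placement $\tilde{\RRR}$ obtained from $\RRR$ by reassigning its $v$ rooks in columns of $T$ to an ordered $v$-tuple of columns drawn from $T$. An exchange argument shows that in any diagonal order, the ``sorted matching''---smallest row matched to smallest column, second-smallest to second-smallest, and so on---produces the unique $\diag$-maximum of this orbit, because sorted matching strictly minimizes $i+j$ at the earliest (largest) variables appearing in the monomial. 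Conditions (ii) and (iii) guarantee that this sorted matching is precisely $\RRR$, so $\mmm(\RRR)$ is the unique $\diag$-leading term of the sum, appearing with the nonzero coefficient $(|T|-v)!$. Solving for $\mmm(\RRR)$ then writes it as a $\CC$-linear combination of strictly $\diag$-smaller monomials $\mmm(\RRR')$ with $\RRR'\in\ZZZ_{n,m,d}$, modulo $I_{n,m,r}$, which is exactly the claim.
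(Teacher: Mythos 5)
Your overall architecture matches the paper's proof: use Corollary~\ref{cor:ann-of-I_{n,m,r}-extend} to obtain a relation $\sum_{w\in\symm_S}(w,1)\cdot\mmm(\RRR)\in I_{n,m,r}$ (or its column analogue) for a symmetrizer over a set of more than $n+m-d-r$ indices, arrange that the rooks in those indices form a configuration increasing in both coordinates and packed to the left of the empty indices so that $\mmm(\RRR)$ is the unique leading term with respect to the diagonal order (with coefficient $(|S|-t)!$), and solve. The conditions (i)--(iii) you isolate are precisely the properties of the paper's set $S$, and your exchange argument for the leading term is at the same level of detail as the paper's.

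However, you have explicitly deferred the one step that carries the content of the lemma: constructing $T$ and proving $|T|>n+m-d-r$. The paper does this by taking $k$ minimal with $x_1+\cdots+x_k>m-r$; minimality forces $x_k=1$, so some shadow line $L_t$ has its vertical ray at $x=k$, and since the vertical rays of $L_1,L_2,\dots$ occur at increasing abscissae, exactly $t$ of the first $k$ vertical lines carry rays. Backtracking from that ray southward to a rook on $L_t$, then west to $L_{t-1}$, south to a rook on $L_{t-1}$, and so on, yields rooks $(i_1,j_1),\dots,(i_t,j_t)$ with $i_1<\cdots<i_t=k$ and $j_1<\cdots<j_t$. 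Setting $S=\{i_1,\dots,i_t\}\sqcup\{i>i_t:\text{the line }x=i\text{ misses }\RRR\}$, the prefix-sum excess reads $t-\#\{i\le k:\text{line }x=i\text{ misses }\RRR\}>m-r$, whence $|S|>(n-d)+(m-r)$. Without this shadow-line backtracking and the accompanying count, your (i)--(iii) remain a wish list, so the argument as written is incomplete. Two smaller points: the ``equivalently'' in your (iii) is not an equivalence (the needed condition is that every empty column of $T$ lies to the right of every rook-occupied column of $T$, which is not implied by emptiness of the columns beyond $j_v$); and you pair the failure of the $x$-prefix sums with a subset of $[m]$, whereas those sums are indexed by $[n]$ and naturally produce the row-symmetrizer case --- harmless by symmetry, but a sign the construction was not carried through.
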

\begin{proof}
Applying Viennot's shadow construction to $\RRR$, we obtain shadow lines $L_1,\dots,L_p$, together with sequences $(x_i)_{i=1}^n$ and $(y_j)_{j=1}^m$ as in Proposition~\ref{prop:equivalent-description-of-shadow-set}. Because $\RRR$ is not the extended shadow set of any rook placements in $\UZ_{n,m,r}$, Proposition~\ref{prop:equivalent-description-of-shadow-set} implies that either $\sum_{i=1}^k x_i > m-r$ for some $1\le k\le n$ or $\sum_{j=1}^l y_j >n-r$ for some $1\le l\le m$. 

Without loss of generality, we assume that $\sum_{i=1}^k x_i >m-r$ for some $1\le k\le n$ and further require $k$ to be the smallest integer with this property. Then $x_k>0$ and thus $x_k=1$, implying that some shadow line $L_t$ has a vertical ray at $x=k$. We define a size $t$ subset $\RRR_1 = \{(i_1,j_1),\dots,(i_t,j_t)\}\subseteq\RRR$ as follows. Start at the vertical ray of $L_t$ and go south. Let $(i_t,j_t)$ be the first rook of $\RRR$ encountered (in particular, we have $i_t=k$). Then go west from $(i_t,j_t)$ until we encounter a vertical segment of the shadow line $L_{t-1}$ and turn left. Go south until we encounter a rook $(i_{t-1},j_{t-1})\in\RRR$. Repeating this process, we obtain a sequence of rooks $(i_t,j_t),(i_{t-1},j_{t-1}),\dots,(i_1,j_1)$ sequentially, such that $(i_u,j_u)$ is on the shadow line $L_u$ ($u=1,\dots,t$), with $i_1<i_2<\dots<i_t$, and $j_1<j_2<\dots<j_t$. Let $\RRR_2=\RRR\setminus\RRR_1$. 

For example, let $n=m=8$ and $r=7$. Let $\RRR=\{(2,2),(3,5),(4,3),(5,6),(6,1),(7,7)\}\in\ZZZ_{8,8,6}$ be a rook placement on the $8\times 8$ board. We mark all rooks of $\RRR$ using red points and draw black shadow lines $L_1,\dots,L_4$. The sequence $(x_i)_{i=1}^8$ is $-1,1,1,0,1,0,1,-1$ where the smallest $k$ such that $\sum_{i=1}^k x_i>m-r$ is $k=5$. Since $x=5$ corresponds to the vertical ray of the shadow line $L_3$, we start from this vertical ray and go south, then west, and so on, yielding $\{(i_1,j_1),(i_2,j_2),(i_3,j_3)\}$. Our track is shown in blue as follows.

\begin{center}
\begin{tikzpicture}[scale=0.8]
    \draw[gray!70, thin] (0,0) grid (8,8);
    \draw[thick, -Stealth] (0,0) -- (8,0) node[below] {$x$};
    \draw[thick, -Stealth] (0,0) -- (0,8) node[left]  {$y$};
    \filldraw[red] (2,2) circle (.1)
    (3,5) circle (.1)
    (4,3) circle (.1)
    (5,6) circle (.1)
    (6,1) circle (.1)
    (7,7) circle (.1);
    \draw[line width = 1.5pt] (2,2) -- (2,8)
    (2,2) -- (6,2)
    (6,2) -- (6,1)
    (6,1) -- (8,1)
    (3,5) -- (3,8)
    (3,5) -- (4,5)
    (4,5) -- (4,3)
    (4,3) -- (8,3)
    (5,6) -- (5,8)
    (5,6) -- (8,6)
    (7,7) -- (7,8)
    (7,7) -- (8,7);
    \draw[dashed, line width = 2.5pt, blue] (5,8) -- (5,6)
    (5,6) -- (3,6)
    (3,6) -- (3,5)
    (3,5) -- (2,5)
    (2,5) -- (2,2);
\end{tikzpicture}
\end{center}

Consequently we obtain $(i_1,j_1) = (2,2)$, $(i_2,j_2) = (3,5)$, and $(i_3,j_3) = (5,6)$, so \[\RRR_1=\{(2,2),(3,5),(5,6)\}\] and \[\RRR_2 = \RRR\setminus\{(2,2),(3,5),(5,6)\} = \{(4,3),(6,1),(7,7)\}\] in the example above. 

Back to our proof. Define $S\subseteq[n]$ by
\[S = \{i_1,i_2,\dots,i_t\}\sqcup\{i\in\{i_t+1,\dots,n\}\,:\,\text{the vertical line $x=i$ does not meet $\RRR$}\}.\]
\textbf{Claim:} $|S|>n+m-d-r$.

In fact, $\sum_{i=1}^k x_i > m-r$ implies that 
\[t>|\{i\in[i_t]\,:\, \text{the vertical line $x=i$ does not meet $\RRR$}\}| + m-r\]
and hence
\begin{align*}
   |S|= & t + |\{i\in\{i_t+1,\dots,n\}\,:\,\text{the vertical line $x=i$ does not meet $\RRR$}\}|  \\
   > & |\{i\in[i_t]\,:\, \text{the vertical line $x=i$ does not meet $\RRR$}\}| + m-r + \\ & |\{i\in\{i_t+1,\dots,n\}\,:\,\text{the vertical line $x=i$ does not meet $\RRR$}\}| \\
   = & |\{i\in[n]\,:\,\text{the vertical line $x=i$ does not meet $\RRR$}\}| +m-r \\
   = & n-d +m-r =n+m-d-r
\end{align*}
which completes the proof of the claim above.

Then Corollary~\ref{cor:ann-of-I_{n,m,r}-extend} reveals that
\begin{equation}\label{eq:linear-relation}
\sum_{w\in\symm_S}(w,1)\cdot\mmm(\RRR) \in I_{n,m,r}.
\end{equation}
Note that $\mmm(\RRR) = \mmm(\RRR_1)\cdot\mmm(R_2)$ where for all $w\in\symm_{S}$ the action of $(w,1)$ does not change $\mmm(\RRR_2)$. Therefore, we have that
\[\sum_{w\in\symm_S}(w,1)\cdot\mmm(\RRR) = \bigg(\sum_{w\in\symm_S}(w,1)\cdot\mmm(\RRR_1)\bigg)\cdot\mmm(\RRR_2)\]
where $\mmm(\RRR_1)$ is the initial monomial of $\sum_{w\in\symm_S}(w,1)\cdot\mmm(\RRR_1)$ under the monomial order $\diag$, indicating that $\mmm(\RRR_1)\cdot\mmm(\RRR_2) = \mmm(\RRR)$ is the initial monomial of $\sum_{w\in\symm_S}(w,1)\cdot\mmm(\RRR)$. As a result, the initial monomial of the expression on the left-hand side of Equation~\eqref{eq:linear-relation} is $\mmm(\RRR)$, finishing the proof.
\end{proof}

Now we are ready to provide a smaller spanning set than Lemma~\ref{lem:rook-span}. In the next section, we will see that this spanning set is actually the standard monomial basis of $\CC[\xxx_{n\times m}]/I_{n,m,r}$ with respect to $\diag$
\begin{lemma}\label{lem:shadow-span}
    The family of monomials $\{\es(\RRR)\,:\,\RRR\in\UZ_{n,m,r}\}$ descends to a spanning set of $\CC[\xxx_{n\times m}]/I_{n,m,r}$.
\end{lemma}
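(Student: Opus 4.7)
The plan is to deduce Lemma~\ref{lem:shadow-span} from Lemma~\ref{lem:rook-span} by iteratively eliminating non-shadow rook monomials using Lemma~\ref{lem:linear-relation-induction}. Since Lemma~\ref{lem:rook-span} already gives the larger spanning set $\{\mmm(\RRR) \,:\, \RRR \in \bigsqcup_{d=0}^{\min\{m,n\}}\ZZZ_{n,m,d}\}$ of $\CC[\xxx_{n\times m}]/I_{n,m,r}$, it suffices to show that each such $\mmm(\RRR)$ lies modulo $I_{n,m,r}$ in the $\CC$-linear span of the family $\{\es(\RRR')\,:\,\RRR'\in\UZ_{n,m,r}\}$.

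To accomplish this, I would fix a degree $d$ and argue by strong induction on $\mmm(\RRR)$ from small to large with respect to the diagonal monomial order $\diag$, noting that Lemma~\ref{lem:linear-relation-induction} preserves degree, and that $\diag$ restricted to the finite set of degree-$d$ rook monomials is a well-order. For the inductive step, take $\RRR \in \ZZZ_{n,m,d}$. If $\RRR = \eshadow(\tilde{\RRR})$ for some $\tilde{\RRR} \in \UZ_{n,m,r}$ — and Lemma~\ref{lem:extended-shadow-upper-right} guarantees that $\eshadow(\tilde{\RRR}) \subseteq [n]\times [m]$, so that $\es(\tilde{\RRR})$ is a genuine element of $\CC[\xxx_{n\times m}]$ — then $\mmm(\RRR)=\es(\tilde{\RRR})$ already belongs to the desired set. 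Otherwise, Lemma~\ref{lem:linear-relation-induction} writes $\mmm(\RRR)$ modulo $I_{n,m,r}$ as a $\CC$-linear combination of monomials $\mmm(\RRR'')$ with $\RRR''\in\ZZZ_{n,m,d}$ and $\mmm(\RRR'')\diag\mmm(\RRR)$, each of which lies in the desired span by the inductive hypothesis.

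The only edge case worth a moment's pause is the $\diag$-minimal degree-$d$ rook monomial: if it fails to be an extended shadow monomial, Lemma~\ref{lem:linear-relation-induction} expresses it as an empty sum of strictly smaller monomials modulo $I_{n,m,r}$, which simply means it lies in $I_{n,m,r}$ and vanishes in the quotient. This is entirely consistent. Summing over $d$ then gives the claim.

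I do not expect a serious obstacle here: all the technical content — namely, the annihilator computation in Corollary~\ref{cor:ann-of-I_{n,m,r}-extend} and the careful shadow-line bookkeeping behind Lemma~\ref{lem:linear-relation-induction} — has already been handled, and the remaining task is a clean well-founded induction on the diagonal monomial order that extracts the sharper spanning set from the coarser one.
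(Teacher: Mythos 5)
Your proposal is correct and follows essentially the same route as the paper: reduce to the rook-monomial spanning set of Lemma~\ref{lem:rook-span}, then run a well-founded induction on the diagonal order $\diag$ using Lemma~\ref{lem:linear-relation-induction} to replace each non-extended-shadow monomial by $\diag$-smaller ones. Your explicit treatment of the $\diag$-minimal edge case (an empty sum forcing the monomial into $I_{n,m,r}$) is a detail the paper leaves implicit, but the argument is the same.
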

\begin{proof}
    By Lemma~\ref{lem:rook-span}, it suffices to show that for all the rook placements $\RRR$ on the $n\times m$ board we have that
    \begin{equation*}
        \mmm(\RRR)\in\spa\{\es(\RRR^\prime)\,:\,\RRR^\prime\in\UZ_{n,m,r}\} + I_{n,m,r}.
    \end{equation*}
    This is immediately from Lemma~\ref{lem:linear-relation-induction} which enables us to do induction with respect to the monomial order $\diag$ on $\{\mmm(\RRR)\,:\,\text{$\RRR$ is a rook placement on the $n\times m$ board}\}$.
\end{proof}
\begin{remark}\label{rmk:shadow-span}
    In fact, the proof of Lemma~\ref{lem:shadow-span} gives a slightly stronger result than Lemma~\ref{lem:shadow-span} itself: For each rook placement $\RRR$ on the $n\times m$ board, we have that
    \[\mmm(\RRR)\in\spa\{\es(\RRR^\prime)\,:\,\RRR^\prime\in\UZ_{n,m,r},\,\, \es(\RRR^\prime)\eqdiag\mmm(\RRR)\} + I_{n,m,r}.\]
\end{remark}

\subsection{Standard monomial basis and Hilbert series}\label{subsec:basis-hilb}

As promised, we strengthen Lemma~\ref{lem:shadow-span} to obtain the standard monomial basis of $\CC[\xxx_{n\times m}]/I_{n,m,r}$.
\begin{theorem}\label{thm:basis}
    The family of monomials $\{\es(\RRR)\,:\,\RRR\in\UZ_{n,m,r}\}$ descends to the standard monomial basis of $\CC[\xxx_{n\times m}]/I_{n,m,r}$. Furthermore, we have $\gr \, \II(\UZ_{n,m,r}) = I_{n,m,r}$.
\end{theorem}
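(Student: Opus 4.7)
The plan is to mirror the dimension-comparison argument from Proposition~\ref{prop:toy-basis}, now carrying out the extra bookkeeping required when $r>0$. The main tasks are (i) to show $I_{n,m,r}\subseteq\gr\,\II(\UZ_{n,m,r})$, (ii) to squeeze all dimensions to $|\UZ_{n,m,r}|$, and (iii) to show that every monomial outside the shadow family belongs to $\text{in}_\diag I_{n,m,r}$.

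For (i), the single-monomial generators $x_{i,j}x_{i,j'}$ and $x_{i,j}x_{i',j}$ already lie in $\II(\UZ_{n,m,r})$ since a rook placement has at most one entry in any row or column. For the row-sum generator $\prod_{k=1}^{n-r+1}\big(\sum_{j=1}^m x_{i_k,j}\big)$ I would consider the inhomogeneous lift
\[
f_{\mathrm{row}} \;=\; \prod_{k=1}^{n-r+1}\bigg(\sum_{j=1}^m x_{i_k,j}-1\bigg),
\]
whose top-degree part is precisely this generator. Any $\RRR\in\UZ_{n,m,r}$ has at most $n-r$ rook-free rows, so among the $n-r+1$ chosen rows at least one must contain a rook; the corresponding factor vanishes at $\RRR$, giving $f_{\mathrm{row}}\in\II(\UZ_{n,m,r})$ and thus $\tau(f_{\mathrm{row}})\in\gr\,\II(\UZ_{n,m,r})$. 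The column-sum generators are handled by the symmetric construction. For (ii), the containment just proved produces a graded surjection $\CC[\xxx_{n\times m}]/I_{n,m,r}\twoheadrightarrow R(\UZ_{n,m,r})$, so $\dim_\CC\CC[\xxx_{n\times m}]/I_{n,m,r}\ge|\UZ_{n,m,r}|$. Lemma~\ref{lem:shadow-span} gives $\dim_\CC\CC[\xxx_{n\times m}]/I_{n,m,r}\le|\{\es(\RRR):\RRR\in\UZ_{n,m,r}\}|$, and the map $\RRR\mapsto\eshadow(\RRR)$ is injective on $\UZ_{n,m,r}$ by the constructive ``if'' direction in the proof of Proposition~\ref{prop:equivalent-description-of-shadow-set}: given $\eshadow(\RRR)$ one reconstructs the tableau pair of $\ex(\RRR)$ by iterating Viennot's shadow and appending the prescribed first rows indexed by the barred entries, then recovers $\ex(\RRR)$ via Schensted and reads off $\RRR$ from its upper-right $n\times m$ block. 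All three quantities therefore equal $|\UZ_{n,m,r}|$; the surjection becomes an isomorphism, forcing $\gr\,\II(\UZ_{n,m,r})=I_{n,m,r}$ and forcing the shadow family to be a basis of the quotient.

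For (iii), I classify any monomial $\mu\in\CC[\xxx_{n\times m}]$ into three mutually exclusive cases. If $\mu$ contains two variables from the same row or column then $\mu$ is literally a generator of $I_{n,m,r}$, hence $\mu\in\text{in}_\diag I_{n,m,r}$. If $\mu=\mmm(\RRR)$ for a rook placement $\RRR$ on the $n\times m$ board that is \emph{not} the extended shadow set of any element of $\UZ_{n,m,r}$, then Lemma~\ref{lem:linear-relation-induction} produces an element of $I_{n,m,r}$ whose $\diag$-initial monomial is $\mu$, again placing $\mu\in\text{in}_\diag I_{n,m,r}$. In the remaining case $\mu=\es(\RRR)$ for some $\RRR\in\UZ_{n,m,r}$. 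Hence every monomial not of the form $\es(\RRR)$ belongs to $\text{in}_\diag I_{n,m,r}$, and the dimension count from (ii) forces the shadow monomials to be precisely the standard monomials. The main obstacle is really the technical Lemma~\ref{lem:ideal-induction} behind Lemma~\ref{lem:linear-relation-induction}, which is already in hand; once that infrastructure and the inhomogeneous lift $f_{\mathrm{row}}$ above are secured, the rest of the proof is a clean dimension count.
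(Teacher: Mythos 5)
Your proposal is correct and follows essentially the same route as the paper: the containment $I_{n,m,r}\subseteq\gr\,\II(\UZ_{n,m,r})$ via the inhomogeneous lifts $\prod_{k}\big(\sum_{j}x_{i_k,j}-1\big)$, the dimension squeeze against $|\UZ_{n,m,r}|$ using Lemma~\ref{lem:shadow-span}, and Lemma~\ref{lem:linear-relation-induction} as the engine for the standard-monomial claim. The only cosmetic difference is the last step: you show directly that every monomial outside $\{\es(\RRR)\,:\,\RRR\in\UZ_{n,m,r}\}$ lies in the initial ideal of $I_{n,m,r}$ with respect to $\diag$ and then count, whereas the paper argues by contradiction that no $\es(\RRR)$ can occur as an initial monomial; these are equivalent, and your explicit injectivity argument for $\RRR\mapsto\eshadow(\RRR)$, while valid, also falls out for free from the dimension count.
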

\begin{remark}\label{rmk:thm:basis}
    Consider the case $r=0$. Recall that Proposition~\ref{prop:toy-basis} provides a basis $\{\mmm(\RRR)\,:\,\RRR\in\ZZZ_{n,m}\}$ which contains the standard monomial basis $\{\es(\RRR)\,:\,\RRR\in\UZ_{n,m,0} = \ZZZ_{n,m}\}$ given by Theorem~\ref{thm:basis}. Therefore, we have
    \[\{\mmm(\RRR)\,:\,\RRR\in\ZZZ_{n,m}\} = \{\es(\RRR)\,:\,\RRR\in\UZ_{n,m,0} = \ZZZ_{n,m}\},\]
    indicating that
    \[\ZZZ_{n,m} = \{\eshadow(\RRR)\,:\,\RRR\in\ZZZ_{n,m}\}.\]
    In fact, this fact is also immediate from Proposition~\ref{prop:equivalent-description-of-shadow-set}.
\end{remark}

In order to prove Theorem~\ref{thm:basis}, we need the following lemma on ideal containment.

\begin{lemma}\label{lem: ideal-containment}
    $I_{n,m,r}\subseteq\gr \, \II(\UZ_{n,m,r})$.
\end{lemma}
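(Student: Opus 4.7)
The plan is to verify the containment on each of the four families of generators of $I_{n,m,r}$ from Definition~\ref{def:ideal}. The generators of types (1) and (2), namely products $x_{i,j}\cdot x_{i,j'}$ and $x_{i,j}\cdot x_{i',j}$ of variables sharing a row or column, are easy: any rook placement has at most one rook per row and per column, so such a product vanishes identically on $\UZ_{n,m,r}$ and already lies in $\II(\UZ_{n,m,r})$. Since it is homogeneous, it equals its own top-degree part and therefore lies in $\gr\,\II(\UZ_{n,m,r})$.

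The substantive part is handling the row-sum and column-sum products. For the row-sum generator associated to $1\le i_1<\dots<i_{n-r+1}\le n$, I would consider the inhomogeneous lift
\[
f \;=\; \prod_{k=1}^{n-r+1}\Bigl(\sum_{j=1}^{m}x_{i_k,j}-1\Bigr).
\]
The key observation is that on any rook placement $\RRR$, the row sum $\sum_{j=1}^{m}x_{i_k,j}$ evaluates to $1$ if row $i_k$ contains a rook of $\RRR$ and to $0$ otherwise. Therefore each factor $\sum_{j}x_{i_k,j}-1$ evaluates to $0$ precisely when row $i_k$ contains a rook. A rook placement $\RRR\in\UZ_{n,m,r}$ of size $s\ge r$ has exactly $n-s\le n-r$ empty rows, so among the $n-r+1$ rows $i_1,\dots,i_{n-r+1}$ at least one must contain a rook by pigeonhole, forcing $f(\RRR)=0$. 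Hence $f\in\II(\UZ_{n,m,r})$, and since the top-degree part of $f$ is precisely $\prod_{k=1}^{n-r+1}\bigl(\sum_{j=1}^{m}x_{i_k,j}\bigr)$, this generator lies in $\gr\,\II(\UZ_{n,m,r})$. The column-sum generators are handled in exactly the same way, using the lift $\prod_{k=1}^{m-r+1}\bigl(\sum_{i=1}^{n}x_{i,j_k}-1\bigr)$ and the symmetric pigeonhole count on empty columns.

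There is no real obstacle here; the only subtle point is the correct choice of inhomogeneous lift, namely subtracting $1$ (the maximum value of a row/column sum on a rook placement) inside each factor so that the lift vanishes on every rook placement of size at least $r$ while the top-degree part reproduces the desired generator. Combining the four cases yields $I_{n,m,r}\subseteq\gr\,\II(\UZ_{n,m,r})$.
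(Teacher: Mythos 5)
Your proposal is correct and follows essentially the same route as the paper: the quadratic generators vanish identically on the locus because rook placements have at most one rook per row and column, and the row-sum and column-sum generators arise as top-degree parts of the inhomogeneous lifts $\prod_k\bigl(\sum_j x_{i_k,j}-1\bigr)$, which vanish by the same pigeonhole count on occupied rows/columns. No discrepancies to report.
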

\begin{proof}
    For any rook placement $\RRR\in\UZ_{n,m,r}$, each column (resp. row) of $[n]\times[m]$ has at most one rook. Consequently, for all $1\le i\le n$ and $1\le j,j^\prime\le m$ we have $x_{i,j}\cdot x_{i,j^\prime}\in\II(\UZ_{n,m,r})$ and thus $x_{i,j}\cdot x_{i,j^\prime}\in\gr \, \II(\UZ_{n,m,r})$ (resp. for all $1\le i,i^\prime\le n$ and $1\le j\le m$ we have $x_{i,j}\cdot x_{i^\prime,j}\in\gr \, \II(\UZ_{n,m,r})$). Additionally, since $\RRR$ has at least $r$ rooks, any union of $n-r+1$ distinct columns of $[n]\times[m]$ must meet $\RRR$. Thus, for all $1\le i_1<\dots<i_{n-r+1}\le n$ we have
    \[\prod_{k=1}^{n-r+1}\Bigg(\sum_{j=1}^m x_{i_k,j} - 1\Bigg) \in \II(\UZ_{n,m,r}),\] which implies that
    \[\prod_{k=1}^{n-r+1}\Bigg(\sum_{j=1}^m x_{i_k,j}\Bigg) \in \gr \, \II(\UZ_{n,m,r}).\] Similarly, for all $1\le j_1<\dots<j_{m-r+1}\le m$ we have
    \[\prod_{k=1}^{m-r+1}\Bigg(\sum_{i=1}^n x_{i,j_k}\Bigg) \in \gr \, \II(\UZ_{n,m,r}).\]
    The proof is thus complete as all generators of $I_{n,m,r}$ are in $\gr \, \II(\UZ_{n,m,r})$.
\end{proof}
With the lemma, we now give the proof of Theorem~\ref{thm:basis}
\begin{proof}[Proof of Theorem~\ref{thm:basis}]
    Lemma~\ref{lem: ideal-containment}yields a surjective $\CC$-linear map
    \[\CC[\xxx_{n\times m}]/I_{n,m,r}\twoheadrightarrow R(\UZ_{n,m,r}),\] implying that $\dim_\CC(\CC[\xxx_{n\times m}]/I_{n,m,r})\ge\dim_\CC(R(\UZ_{n,m,r}))$. However, Lemma~\ref{lem:shadow-span} implies that $\dim_\CC(\CC[\xxx_{n\times m}]/I_{n,m,r})\le|\UZ_{n,m,r}|=\dim_\CC(R(\UZ_{n,m,r}))$. Therefore, we conclude that \[\dim_\CC(\CC[\xxx_{n\times m}]/I_{n,m,r})=\dim_\CC(R(\UZ_{n,m,r})),\] so the spanning set $\{\es(\RRR)\,:\,\RRR\in\UZ_{n,m,r}\}$ in Lemma~\ref{lem:shadow-span} is indeed a basis of $\CC[\xxx_{n\times m}]/I_{n,m,r}$. Furthermore, the surjection $\CC[\xxx_{n\times m}]/I_{n,m,r}\twoheadrightarrow R(\UZ_{n,m,r})$ above is actually a linear isomorphism, and hence \[I_{n,m,r} = \gr \, \II(\UZ_{n,m,r}).\]

    As we have shown that $\{\es(\RRR)\,:\,\RRR\in\UZ_{n,m,r}\}$ descends to a basis of $\CC[\xxx_{n\times m}]/I_{n,m,r}$, it remains to show that $\{\es(\RRR)\,:\,\RRR\in\UZ_{n,m,r}\}$ descends to the standard monomial basis of $\CC[\xxx_{n\times m}]/I_{n,m,r}$. Assume, for the sake of contradiction, that there exists some $\RRR\in\UZ_{n,m,r}$ such that $\es(\RRR)$ is the initial monomial of some $f\in I_{n,m,r}$. Then we have that
    \begin{align}\label{eq:contradic-basis}
        \es(\RRR) \equiv \sum_{\substack{\RRR^\prime\in\ZZZ_{n,m,|\RRR|}\\ \mmm(\RRR^\prime) < \es(\RRR)}} c_{\RRR^{\prime}}\cdot\mmm(\RRR^\prime) \mod{I_{n,m,r}}
    \end{align}
    where $c_{\RRR^\prime}\in\CC$. However, Remark~\ref{rmk:shadow-span} implies that the right-hand side of Equation~\ref{eq:contradic-basis} can be replaced with a linear combination of some extended shadow monomials \[\{\es(\RRR^\prime)\,:\,\RRR^\prime\in\UZ_{n,m,r},\,\,\es(\RRR^\prime)\diag\es(\RRR)\},\]
    which means that extended shadow monomials $\{\es(\RRR)\,:\,\RRR\in\UZ_{n,m,r}\}$ are linearly dependent in $\CC[\xxx_{n\times m}]/I_{n,m,r}$. This yields a contradiction because we have shown the family of monomials $\{\es(\RRR)\,:\,\RRR\in\UZ_{n,m,r}\}$ descends to a basis of $\CC[\xxx_{n\times m}]/I_{n,m,r}$. Therefore, the family of monomials $\{\es(\RRR)\,:\,\RRR\in\UZ_{n,m,r}\}$ descends to the standard monomial basis of $\CC[\xxx_{n\times m}]/I_{n,m,r}$.
\end{proof}

Theorem~\ref{thm:basis} immediately gives the Hilbert series of $R(\UZ_{n,m,r}) = \CC[\xxx_{n\times m}]/I_{n,m,r}$. Given a permutation $w\in\symm_{n+m-r}$, a \emph{increasing subsequence of length $k$} is a sequence $1\le i_1<\dots<i_k\le n+m-r$ of integers such that $w(i_1)<\dots<w(i_k)$. We write
\[\mathrm{lis}(w)\coloneqq \text{length of the longest increasing subsequence of $w$}.\]
\begin{corollary}\label{cor:hilb}
    The Hilbert series of $R(\UZ_{n,m,r})$ is 
    \begin{equation}
       \Hilb(R(\UZ_{n,m,r});q) = \sum_{\RRR \in \UZ_{n,m,r}} q^{n+m-r-\mathrm{lis}(\ex(\RRR))}.
    \end{equation}
\end{corollary}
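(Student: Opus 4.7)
The plan is to read the Hilbert series off the standard monomial basis furnished by Theorem~\ref{thm:basis}, then translate the degrees into the $\mathrm{lis}$ statistic using the standard facts about Viennot's construction and the Schensted correspondence recalled in Section~\ref{subsec:Viennot-shadow}.

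First, I would invoke Theorem~\ref{thm:basis} to conclude that the family $\{\es(\RRR) : \RRR \in \UZ_{n,m,r}\}$ descends to a homogeneous basis of $R(\UZ_{n,m,r}) = \CC[\xxx_{n\times m}]/I_{n,m,r}$. Since $\es(\RRR) = \mmm(\eshadow(\RRR)) = \prod_{(i,j) \in \eshadow(\RRR)} x_{i,j}$ is a squarefree monomial of degree $|\eshadow(\RRR)|$, summing $q^{\deg}$ over basis elements immediately yields
\[
\Hilb(R(\UZ_{n,m,r});q) = \sum_{\RRR \in \UZ_{n,m,r}} q^{|\eshadow(\RRR)|}.
\]

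Second, I would verify that $|\eshadow(\RRR)| = (n+m-r) - \mathrm{lis}(\ex(\RRR))$ for each $\RRR \in \UZ_{n,m,r}$. Set $w = \ex(\RRR) \in \symm_{n+m-r}$ and let $t$ denote the number of shadow lines of $w$. A shadow line passing through $k$ points of the diagram of $w$ contributes exactly $k-1$ northeast corners (its southwesternmost point, which lies at the foot of the infinite vertical ray, is not a corner), so partitioning the $n+m-r$ points of the diagram among the $t$ shadow lines and summing yields $|\SSS(w)| = (n+m-r) - t$. Viennot's theorem identifies $t$ with the length of the first row of the insertion tableau $P(w)$, and Schensted's theorem identifies this length with $\mathrm{lis}(w)$. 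Combining the identifications gives the claim and hence the stated formula.

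There is no significant obstacle here; the result is essentially a bookkeeping corollary of Theorem~\ref{thm:basis}. The only subtle point is the corner count $|\SSS(w)| = (n+m-r) - t$, and that is easily justified as above.
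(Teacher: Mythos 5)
Your proposal is correct and follows essentially the same route as the paper: read the Hilbert series off the standard monomial basis from Theorem~\ref{thm:basis}, then identify $|\eshadow(\RRR)| = |\SSS(\ex(\RRR))|$ with $(n+m-r)-\mathrm{lis}(\ex(\RRR))$ via the number of shadow lines, Viennot's theorem, and Schensted's $\lambda_1 = \mathrm{lis}(w)$. Your explicit corner count (a shadow line through $k$ points contributes $k-1$ northeast corners) supplies a small step the paper leaves implicit, though your identification of which point fails to yield a corner is phrased slightly loosely; the count itself is right.
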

\begin{proof}
    By Theorem~\ref{thm:basis}, it suffices to show that
    $\deg(\es(\RRR)) = n+m-r-\mathrm{lis}(\ex(\RRR))$ for $\RRR\in\UZ_{n,m,r}$, which is equivalent to
    \[\lvert\eshadow(w)\rvert = n+m-r-\mathrm{lis}(\ex(\RRR)).\]
    Recall that $\eshadow(\RRR) = \SSS(\ex(\RRR))$ is the shadow set of $\ex(\RRR)\in\symm_{n+m-r}$. Consequently, it remains to show that for all $w\in\symm_{n+m-r}$ we have
    \begin{align}\label{eq:shdow-lis}\lvert\SSS(w)\rvert = n+m-r-\mathrm{lis}(w).\end{align}
    Let $k$ be the number of shadow lines of $w$ that we use to construct $\SSS(w)$. Write $w\mapsto(P,Q)$ according to the Schensted correspondence, where $P,Q$ have the same shape $\lambda$. The shadow line construction in Subsection~\ref{subsec:Viennot-shadow} implies that $k=\lambda_1$. In addition, Schensted correspondence possesses the standard property \cite[Thm. 1]{Schensted_1961} $\lambda_1 = \mathrm{lis}(w)$, so we have $k=\mathrm{lis}(w)$ and thus Equation~\eqref{eq:shdow-lis} holds.
\end{proof}

Surprisingly, although $\UZ_{n,m,r}$ looks larger than $\symm_n$, our extension operation $\ex$ embeds $\UZ_{n,m,r}$ into $\symm_{n+m-r}$, so that we can substitute $n=N-a,m=N-b,r=N-a-b$ and hence embed $\UZ_{n,m,r}$ into $\symm_N$. Therefore, while \cite[Corollary 3.13]{rhoades2024increasing} states that
\begin{align}\label{eq:rhoades-hilb}\Hilb(\CC[\xxx_{N\times N}]/I_{N,N,N};q) = \sum_{w\in\symm_N} q^{N-\mathrm{lis}(w)},\end{align}
the following modified version of Corollary~\ref{cor:hilb} refines the index set of Equation~\eqref{eq:rhoades-hilb}.
\begin{corollary}\label{cor:hilb-new}
    For $N\in\ZZ_{>0}$ and $0\le a,b<N$, we have
    \[\Hilb(\CC[\xxx_{(N-a)\times(N-b)}]/I_{N-a,N-b,N-a-b};q) = \sum_w q^{N-\mathrm{lis}(w)}\]
    where $w$ ranges over
    $\{w\in\symm_{N}\,:\,\text{$w(1)<w(2)<\dots<w(a)$ and $w^{-1}(1)<w^{-1}(2)<\dots<w^{-1}(b)$}\}$.
\end{corollary}
\begin{proof}
    Write $\mathrm{Inc\coloneqq}\{w\in\symm_{N}\,:\,\text{$w(1)<w(2)<\dots<w(a)$ and $w^{-1}(1)<w^{-1}(2)<\dots<w^{-1}(b)$}\}$ and let $n=N-a,m=N-b,r=N-a-b$, then we have
    \[\mathrm{Inc}=\{w\in\symm_{n+m-r}\,:\,\text{$w(1)<w(2)<\dots<w(m-r)$ and $w^{-1}(1)<w^{-1}(2)<\dots<w^{-1}(n-r)$}\}.\]
    According to Corollary~\ref{cor:hilb}, it suffices to show that
    \begin{align*}
        \ex\,:\,\UZ_{n,m,r}&\longrightarrow\mathrm{Inc} \\
        \RRR&\longmapsto\ex(\RRR)
    \end{align*}
    is a bijection.

    Since the diagram of $\RRR$ is contained in the diagram of $\ex(\RRR)$, it immediately follows that $\ex$ is injective. It remains to show that $\ex$ is surjective. For any $w\in\mathrm{Inc}$, choose maximal indices $1\le i_0\le m-r,1\le j_0\le n-r$ such that $w(i_0)\le n-r$ and $w^{-1}(j_0)\le m-r$. If such $i_0$ (resp. $j_0$) does not exist, let $i_0=0$ (resp. $j_0=0$). Then, we have $[i_0]\cap w^{-1}(\{j_0+1,\dots,n-r\}) = \varnothing$ and hence $w(i_0)\le j_0$. Similarly, we have $[j_0]\cap w(\{i_0+1,\dots,m-r\}) =\varnothing$ and hence $j_0\le w(i_0)$. These two inequalities force $w(i_0)=j_0$. Therefore, $w(1)<w(2)<\dots<w(i_0)$ indicates that $i_0\le j_0$, while $w^{-1}(1)<w^{-1}(2)<\dots<w^{-1}(j_0)$ indicates that $j_0\le i_0$. As a result, we have $i_0=j_0$ and $w(i)=i$ for all $i\in[i_0]$, so $w=\ex(\RRR)$ where $\RRR\in\UZ_{n,m,r}$ arises from the removal of the first $m-r$ columns and the first $n-r$ rows from the diagram of $w$.
\end{proof}




\section{Module structure}\label{sec:module}

We first provide some combinatorial results about symmetric functions. For convenience, we write $\langle s_{\lambda^{(1)}}\otimes s_{\lambda^{(2)}}\rangle F$ for the coefficient of $s_{\lambda^{(1)}}\otimes s_{\lambda^{(2)}}$ in the Schur expansion of any $F\in \Lambda\otimes_{\CC(q)}\Lambda$. Similarly, we write $\langle q^i\rangle f$ for the coefficient of $q^i$ in any generating function $f\in\CC[[q]]$.

\begin{lemma}\label{lem:coef}
    For $d,a,b,p,q\in\ZZ_{\ge 0}$, consider
    \[F := \sum_{\mu\vdash d}\{s_\mu\cdot h_a\}_{\lambda_1 =p}\otimes\{s_\mu\cdot h_b\}_{\lambda_1 = q}\in\Lambda\otimes_{\CC(q)}\Lambda\]
    and let $\lambda^{(1)}\vdash a+d$ and $\lambda^{(2)}\vdash b+d$ be partitions.
    
    \noindent If all the following conditions hold:
    \begin{itemize}
        \item $\lambda_1^{(1)} = p$,
        \item $\lambda_1^{(2)} = q$, and
        \item for all $i\ge 1$ we have $\min\{\lambda_i^{(1)},\lambda_i^{(2)}\}\ge\max\{\lambda_{i+1}^{(1)},\lambda_{i+1}^{(2)}\}$.
    \end{itemize}
    Then we have
    \[\langle s_{\lambda^{(1)}}\otimes s_{\lambda^{(2)}} \rangle F = 
        \Big\langle q^{d-\sum_{i=1}^\infty \max\{\lambda_{i+1}^{(1)},\lambda_{i+1}^{(2)}\}}\Big\rangle\Bigg(\prod_{i=1}^\infty\Bigg(\sum_{j=0}^{\min\{\lambda_i^{(1)},\lambda_i^{(2)}\} - \max\{\lambda_{i+1}^{(1)},\lambda_{i+1}^{(2)}\}}q^j\Bigg)\Bigg).
    \]
    Otherwise, we have
    \[\langle s_{\lambda^{(1)}}\otimes s_{\lambda^{(2)}} \rangle F = 
        0.
    \]
\end{lemma}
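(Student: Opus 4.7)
The identity is a Pieri-type count, so the strategy is to unpack $F$ via Pieri's rule, reduce the bigraded Schur coefficient to a count of partitions $\mu$, then convert that count into a coefficient of a generating function.

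First, Pieri's rule gives $s_\mu \cdot h_a = \sum_\nu s_\nu$, summed over partitions $\nu$ with $\nu/\mu$ a horizontal strip of size $a$. Applying $\{-\}_{\lambda_1 = p}$ restricts to $\nu$ with $\nu_1 = p$, and analogously for $s_\mu \cdot h_b$ with first part $q$. Therefore
\[
\langle s_{\lambda^{(1)}} \otimes s_{\lambda^{(2)}} \rangle F \;=\; \#\bigl\{ \mu \vdash d : \lambda^{(1)}/\mu \text{ and } \lambda^{(2)}/\mu \text{ are horizontal strips} \bigr\},
\]
provided $\lambda^{(1)}_1 = p$ and $\lambda^{(2)}_1 = q$; if either first-part condition fails, the truncations annihilate every term and the coefficient is $0$, matching the claim.

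Next, I would translate the pair of horizontal-strip conditions into the simultaneous interlacing inequalities
\[
\max\{\lambda^{(1)}_{i+1}, \lambda^{(2)}_{i+1}\} \;\le\; \mu_i \;\le\; \min\{\lambda^{(1)}_i, \lambda^{(2)}_i\} \qquad (i \ge 1),
\]
the standard interlacing characterization of horizontal strips applied to each of $\lambda^{(1)}/\mu$ and $\lambda^{(2)}/\mu$. For $\mu$ to exist, each box must be nonempty, which is exactly the third hypothesis of the lemma; if this fails, the count is $0$ and the lemma holds vacuously. Moreover, the box constraints together with this hypothesis force $\mu_{i+1} \le \min\{\lambda^{(1)}_{i+1},\lambda^{(2)}_{i+1}\} \le \max\{\lambda^{(1)}_{i+1},\lambda^{(2)}_{i+1}\} \le \mu_i$, so the partition requirement on $\mu$ is automatic and imposes no extra condition.

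Finally, write $\mu_i = \max\{\lambda^{(1)}_{i+1}, \lambda^{(2)}_{i+1}\} + j_i$ with $0 \le j_i \le \min\{\lambda^{(1)}_i, \lambda^{(2)}_i\} - \max\{\lambda^{(1)}_{i+1}, \lambda^{(2)}_{i+1}\}$. The constraint $|\mu| = d$ becomes
\[
\sum_{i \ge 1} j_i \;=\; d - \sum_{i \ge 1} \max\{\lambda^{(1)}_{i+1}, \lambda^{(2)}_{i+1}\},
\]
and the number of integer tuples $(j_i)$ with these box restrictions and this total is precisely the coefficient
\[
\Big\langle q^{d - \sum_{i \ge 1} \max\{\lambda^{(1)}_{i+1},\lambda^{(2)}_{i+1}\}} \Big\rangle \prod_{i \ge 1} \Bigl( 1 + q + \cdots + q^{\min\{\lambda^{(1)}_i,\lambda^{(2)}_i\} - \max\{\lambda^{(1)}_{i+1},\lambda^{(2)}_{i+1}\}} \Bigr),
\]
which is exactly the claimed expression. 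The only nontrivial step is the reduction to simultaneous horizontal strips via Pieri; once $\mu$ is parameterized by the $j_i$, the generating-function rewrite is formal. Since $\lambda^{(1)},\lambda^{(2)}$ have only finitely many nonzero parts, the infinite products and sums involved are in fact finite, so no convergence issue arises.
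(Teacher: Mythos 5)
Your proof is correct and follows essentially the same route as the paper's: apply Pieri's rule, observe that the coefficient counts partitions $\mu\vdash d$ satisfying the interlacing conditions $\max\{\lambda^{(1)}_{i+1},\lambda^{(2)}_{i+1}\}\le\mu_i\le\min\{\lambda^{(1)}_i,\lambda^{(2)}_i\}$, and encode that count as a coefficient of a product of truncated geometric series. Your explicit verification that the partition inequality $\mu_{i+1}\le\mu_i$ is automatic is a detail the paper leaves implicit, but otherwise the two arguments coincide.
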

\begin{proof}
    If $s_{\lambda_1^{(1)}}\otimes s_{s_{\lambda_1^{(2)}}}$ appears in the Schur expansion of $F$, we must have $\lambda_1^{(1)}=p$ and $\lambda_1^{(2)}=q$ by the definition of $F$. Additionally, we must have $\min\{\lambda_i^{(1)},\lambda_i^{(2)}\}\ge\max\{\lambda_{i+1}^{(1)},\lambda_{i+1}^{(2)}\}$ for all $i\ge 1$, since both $\lambda^{(1)}$ and $\lambda^{(2)}$ arise from attaching a horizontal strip to the same partition $\mu$. In order to calculate $\langle s_{\lambda^{(1)}}\otimes s_{\lambda^{(2)}} \rangle F$, we only need to count all the partitions $\mu\vdash d$ such that: $\max\{\lambda_{i+1}^{(1)},\lambda_{i+1}^{(2)}\}\le\mu_i\le\min\{\lambda_i^{(1)},\lambda_i^{(2)}\}$ for all $i\ge 1$. Therefore, we have that
    \begin{align*}
        \langle s_{\lambda^{(1)}}\otimes s_{\lambda^{(2)}}\rangle F 
        &= \langle q^{d}\rangle\Bigg(\prod_{i=1}^\infty\Bigg(\sum_{j=\max\{\lambda_{i+1}^{(1)},\lambda_{i+1}^{(2)}\}}^{\min\{\lambda_i^{(1)},\lambda_i^{(2)}\}}q^j\Bigg)\Bigg) \\ 
        &=\langle q^{d-\sum_{i=1}^\infty \max\{\lambda_{i+1}^{(1)},\lambda_{i+1}^{(2)}\}}\rangle\Bigg(\prod_{i=1}^\infty\Bigg(\sum_{j=\max\{\lambda_{i+1}^{(1)},\lambda_{i+1}^{(2)}\}}^{\min\{\lambda_i^{(1)},\lambda_i^{(2)}\}}q^{j-\sum_{i=1}^\infty \max\{\lambda_{i+1}^{(1)},\lambda_{i+1}^{(2)}\}}\Bigg)\Bigg) \\ 
        &= \bigg\langle q^{d-\sum_{i=1}^\infty \max\{\lambda_{i+1}^{(1)},\lambda_{i+1}^{(2)}\}}\bigg\rangle\Bigg(\prod_{i=1}^\infty\Bigg(\sum_{j=0}^{\min\{\lambda_i^{(1)},\lambda_i^{(2)}\} - \max\{\lambda_{i+1}^{(1)},\lambda_{i+1}^{(2)}\}}q^j\Bigg)\Bigg).
    \end{align*}
\end{proof}

\begin{lemma}\label{lem:schur-interchange}
    Let $d,a,b,p,q\in\ZZ_{\ge 0}$ be nonnegative integers. We have
    \[\sum_{\mu\vdash d}\{s_\mu\cdot h_a\}_{\lambda_1 = p}\otimes\{s_\mu\cdot h_b\}_{\lambda_1 = q} = \sum_{\mu\vdash \big(d+a+b-\max\{p,q\}\big)}\{s_\mu\cdot h_{\max\{p,q\}-b}\}_{\lambda_1 = p}\otimes\{s_\mu\cdot h_{\max\{p,q\}-a}\}_{\lambda_1 = q}.\]
\end{lemma}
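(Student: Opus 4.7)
The plan is to apply Lemma~\ref{lem:coef} to both sides and compare coefficients of $s_{\lambda^{(1)}}\otimes s_{\lambda^{(2)}}$ pair-by-pair in the Schur expansion. Writing $M := \max\{p,q\}$, the first step is to verify that both sides are supported on the same pairs $(\lambda^{(1)},\lambda^{(2)})$. The LHS indexes $\lambda^{(1)}\vdash a+d$ and $\lambda^{(2)}\vdash b+d$, while the RHS fits the form of Lemma~\ref{lem:coef} under the substitution $(d,a,b)\mapsto(d+a+b-M,\,M-b,\,M-a)$, so it indexes $\lambda^{(1)}\vdash(d+a+b-M)+(M-b)=a+d$ and $\lambda^{(2)}\vdash(d+a+b-M)+(M-a)=b+d$. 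Since the vanishing clause of Lemma~\ref{lem:coef}---the conditions $\lambda^{(1)}_1=p$, $\lambda^{(2)}_1=q$, and $\min\{\lambda^{(1)}_i,\lambda^{(2)}_i\}\ge\max\{\lambda^{(1)}_{i+1},\lambda^{(2)}_{i+1}\}$ for all $i\ge 1$---depends only on $\lambda^{(1)},\lambda^{(2)},p,q$, both sides vanish on exactly the same pairs.

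Next, on the common support, set $l_i := \max\{\lambda^{(1)}_{i+1},\lambda^{(2)}_{i+1}\}$, $u_i := \min\{\lambda^{(1)}_i,\lambda^{(2)}_i\}$, $L := \sum_{i\ge 1} l_i$, $U := \sum_{i\ge 1} u_i$, and
\[P(q):=\prod_{i\ge1}\Bigg(\sum_{j=0}^{u_i-l_i}q^j\Bigg);\]
all of these depend only on $\lambda^{(1)},\lambda^{(2)}$. Lemma~\ref{lem:coef} then expresses the LHS coefficient of $s_{\lambda^{(1)}}\otimes s_{\lambda^{(2)}}$ as $\langle q^{d-L}\rangle P(q)$ and, under the substitution above, the RHS coefficient as $\langle q^{(d+a+b-M)-L}\rangle P(q)$.

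The key observation is that each factor $\sum_{j=0}^{u_i-l_i}q^j$ is palindromic, so $P(q)$ is itself palindromic of degree $U-L$, i.e.\ $\langle q^k\rangle P(q)=\langle q^{U-L-k}\rangle P(q)$. Consequently the two coefficients are equal precisely when $(d-L)+(d+a+b-M-L)=U-L$, that is, $L+U=2d+a+b-M$. To prove this identity I would note that $M+L=\sum_{i\ge 1}\max\{\lambda^{(1)}_i,\lambda^{(2)}_i\}$ (the $i=1$ summand contributes $\max\{p,q\}=M$) and apply the entrywise identity $\max\{x,y\}+\min\{x,y\}=x+y$ to obtain
\[M+L+U=\sum_{i\ge 1}\bigl(\lambda^{(1)}_i+\lambda^{(2)}_i\bigr)=|\lambda^{(1)}|+|\lambda^{(2)}|=(a+d)+(b+d)=a+b+2d,\]
which yields $L+U=2d+a+b-M$ as required. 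The main obstacle I foresee is spotting the palindromicity of $P(q)$ and correctly isolating the $i=1$ contribution responsible for the $M$ term; the remainder is a routine accounting of partition parts.
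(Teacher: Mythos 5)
Your proposal is correct and follows essentially the same route as the paper's proof: apply Lemma~\ref{lem:coef} to both sides, observe that the product of truncated geometric series is palindromic, and reduce the coefficient comparison to the identity $\max\{p,q\}+\sum_i\min\{\lambda^{(1)}_i,\lambda^{(2)}_i\}+\sum_i\max\{\lambda^{(1)}_{i+1},\lambda^{(2)}_{i+1}\}=2d+a+b$, verified via $\max+\min=$ sum. No substantive difference from the paper's argument.
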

\begin{proof}
    We denote the left-hand side (resp. right-hand side) by $F$ (resp. $G$). It suffices to show that \[\langle s_{\lambda^{(1)}}\otimes s_{\lambda^{(2)}}\rangle F = \langle s_{\lambda^{(1)}}\otimes s_{\lambda^{(2)}}\rangle G\] for any pair of partitions $\lambda^{(1)}\vdash a+d$ and $\lambda^{(2)}\vdash b+d$.

    Consider three conditions:
    \begin{itemize}
        \item $\lambda_1^{(1)} = p$,
        \item $\lambda_1^{(2)} = q$, and
        \item for all $i\ge 1$ we have $\min\{\lambda_i^{(1)},\lambda_i^{(2)}\}\ge\max\{\lambda_{i+1}^{(1)},\lambda_{i+1}^{(2)}\}$. 
    \end{itemize}
    If one of them is not satisfied, Lemma~\ref{lem:coef} implies that
    $\langle s_{\lambda^{(1)}}\otimes s_{\lambda^{(2)}}\rangle F = 0 = \langle s_{\lambda^{(1)}}\otimes s_{\lambda^{(2)}}\rangle G$. We henceforth suppose that all these three conditions hold. Then Lemma~\ref{lem:coef} implies that
    \[\langle s_{\lambda^{(1)}}\otimes s_{\lambda^{(2)}} \rangle F = 
        \Big\langle q^{d-\sum_{i=1}^\infty \max\{\lambda_{i+1}^{(1)},\lambda_{i+1}^{(2)}\}}\Big\rangle f\]
        and
    \[\langle s_{\lambda^{(1)}}\otimes s_{\lambda^{(2)}} \rangle G = 
        \Big\langle q^{d+a+b-\max\{p,q\}-\sum_{i=1}^\infty \max\{\lambda_{i+1}^{(1)},\lambda_{i+1}^{(2)}\}}\Big\rangle f\]  
    where $f=\prod_{i=1}^\infty\Bigg(\sum_{j=0}^{\min\{\lambda_i^{(1)},\lambda_i^{(2)}\} - \max\{\lambda_{i+1}^{(1)},\lambda_{i+1}^{(2)}\}}q^j\Bigg)$. Therefore, it remains to show that
    \[\Big\langle q^{d-\sum_{i=1}^\infty \max\{\lambda_{i+1}^{(1)},\lambda_{i+1}^{(2)}\}}\Big\rangle f = \Big\langle q^{d+a+b-\max\{p,q\}-\sum_{i=1}^\infty \max\{\lambda_{i+1}^{(1)},\lambda_{i+1}^{(2)}\}}\Big\rangle f.\]
    Note that $f$ is actually a product of finitely many palindromic polynomials in $q$, $f$ itself is also a palindromic polynomial in $q$. Thus, it suffices to show that
    \[\Big(d-\sum_{i=1}^\infty \max\{\lambda_{i+1}^{(1)},\lambda_{i+1}^{(2)}\}\Big)+\Big(d+a+b-\max\{p,q\}-\sum_{i=1}^\infty \max\{\lambda_{i+1}^{(1)},\lambda_{i+1}^{(2)}\}\Big) = \deg(f),\]
    which is equivalent to
    \begin{align}\label{eq:schur-interchange}2d+a+b = \max\{p,q\} + \sum_{i=1}^\infty\min\{\lambda_i^{(1)},\lambda_i^{(2)}\} + \sum_{i=1}^\infty\max\{\lambda_{i+1}^{(1)},\lambda_{i+1}^{(2)}\}.\end{align}
    Since the right-hand side of Equation~\eqref{eq:schur-interchange} equals
    \begin{align*}
        &\max\{\lambda_1^{(1)},\lambda_1^{(2)}\} + \sum_{i=1}^\infty\min\{\lambda_i^{(1)},\lambda_i^{(2)}\} + \sum_{i=1}^\infty\max\{\lambda_{i+1}^{(1)},\lambda_{i+1}^{(2)}\} \\
        =&\sum_{i=1}^\infty\min\{\lambda_i^{(1)},\lambda_i^{(2)}\} + \sum_{i=1}^\infty\max\{\lambda_{i}^{(1)},\lambda_{i}^{(2)}\} \\
        =&\sum_{i=1}^\infty \Big(\min\{\lambda_i^{(1)},\lambda_i^{(2)}\} + \max\{\lambda_{i}^{(1)},\lambda_{i}^{(2)}\}\Big) \\
        =&\sum_{i=1}^\infty\Big(\lambda_i^{(1)} + \lambda_i^{(2)}\Big) = \lvert\lambda^{(1)}\rvert +\lvert\lambda^{(2)}\rvert = (a+d) +(b+d) =2d+a+b,
    \end{align*}
    which equals the left-hand side of Equation~\eqref{eq:schur-interchange}. Therefore, Equation~\eqref{eq:schur-interchange} holds, completing our proof.
\end{proof}

\begin{corollary}\label{cor:schur-sum-eq}
    \begin{align}\label{eq:schur-sum}\sum_{d=0}^{\min\{m,n\}}\Bigg\{\sum_{\mu\vdash d}(s_\mu\cdot h_{n-d})\otimes(s_\mu\cdot h_{m-d})\Bigg\}_{\lambda_1\le n+m-d-r} = \sum_{r^\prime =r}^{\min\{m,n\}}\sum_{\mu\vdash r^\prime}(s_\mu\cdot h_{n-r^\prime})\otimes(s_\mu\otimes h_{m-r^\prime})\end{align}
\end{corollary}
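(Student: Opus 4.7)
The plan is to expand the outer truncation on the left-hand side of \eqref{eq:schur-sum} by Schur-component indices $(p,q) = (\lambda^{(1)}_1, \lambda^{(2)}_1)$, apply Lemma~\ref{lem:schur-interchange} term by term, and then re-index the resulting sum to match the right-hand side.

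First I would rewrite
\[
\left\{\sum_{\mu\vdash d}(s_\mu\cdot h_{n-d})\otimes(s_\mu\cdot h_{m-d})\right\}_{\lambda_1\le n+m-d-r}
=\sum_{\substack{p,q\ge 0 \\ \max\{p,q\}\le n+m-d-r}}\sum_{\mu\vdash d}\{s_\mu h_{n-d}\}_{\lambda_1 = p}\otimes\{s_\mu h_{m-d}\}_{\lambda_1 = q}.
\]
Lemma~\ref{lem:schur-interchange} with $a = n-d$, $b = m-d$, and $M = \max\{p,q\}$ rewrites each inner sum as $\sum_{\mu\vdash(n+m-d-M)}\{s_\mu h_{M-(m-d)}\}_{\lambda_1=p}\otimes\{s_\mu h_{M-(n-d)}\}_{\lambda_1=q}$. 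Setting $r'\coloneqq n+m-d-M$, the identities $M-(m-d)=n-r'$ and $M-(n-d)=m-r'$ convert this to the symmetric form $\sum_{\mu\vdash r'}\{s_\mu h_{n-r'}\}_{\lambda_1=p}\otimes\{s_\mu h_{m-r'}\}_{\lambda_1=q}$.

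Next I would swap summation indices from $(d,p,q)$ to $(r',p,q)$. The original constraints $0\le d\le\min\{m,n\}$ and $M\le n+m-d-r$ translate to $r\le r'\le\min\{m,n\}$ together with one extra lower bound $r'\ge\max\{m,n\}-M$. Expanding the right-hand side in the same $(p,q)$-fashion yields the identical summand ranging over $r\le r'\le\min\{m,n\}$, but without the extra lower bound.

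The main obstacle is therefore to show that the "missing" terms with $r\le r'<\max\{m,n\}-M$ vanish identically, so that the two ranges produce equal sums. I would establish this by a short Pieri argument: any partition $\lambda\vdash n$ with $\lambda_1=p$ appearing in $s_\mu h_{n-r'}$ (for some $\mu\vdash r'$) satisfies $\lambda_i\le\mu_{i-1}$ for $i\ge 2$, hence $n-p=\sum_{i\ge 2}\lambda_i\le\sum_{i\ge 1}\mu_i=r'$; by symmetry, non-vanishing of the second tensor factor forces $r'\ge m-q$. A case check on whether $\max\{m,n\}=n$ or $m$, and on whether $M=p$ or $q$, then shows $\max\{n-p,m-q\}\ge\max\{m,n\}-M$ in every case, so every nonzero summand already satisfies $r'\ge\max\{m,n\}-M$. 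This pruning matches the two sides and completes the proof.
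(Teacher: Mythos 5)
Your proposal is correct and follows essentially the same route as the paper's proof: expand the truncation over the pairs $(p,q)$ of first parts, apply Lemma~\ref{lem:schur-interchange} termwise, reindex via $r' = n+m-d-\max\{p,q\}$, and then use Pieri bounds ($r'\ge n-p$ and $r'\ge m-q$ for nonzero terms) to show the residual constraint $r'\ge\max\{m,n\}-\max\{p,q\}$ is automatic. The only difference is bookkeeping — the paper carries the lower bounds $p\ge n-d$, $q\ge m-d$ explicitly and discards three inequalities at the end, while you absorb them into vanishing terms and discard one — which does not affect correctness.
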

\begin{proof}
    Pieri's rule and Lemma~\ref{lem:schur-interchange} imply that the left-hand side equals
    \begin{align}\nonumber
        &\sum_{d=0}^{\min\{m,n\}}\sum_{\mu\vdash d}\{s_\mu\cdot h_{n-d}\}_{n-d\le\lambda_1\le n+m-d-r}\otimes\{s_\mu\cdot h_{m-d}\}_{m-d\le\lambda_1\le n+m-d-r} \\ \nonumber
        =&\sum_{\substack{(d,p,q)\in\ZZ^3\\0\le d\le \min\{m,n\}\\ n-d\le p\le n+m-d-r \\
        m-d\le q\le n+m-d-r}} \sum_{\mu\vdash d}\{s_\mu\cdot h_{n-d}\}_{\lambda_1 = p}\otimes\{s_\mu\cdot h_{m-d}\}_{\lambda_1 = q} \\ \nonumber
        =&\sum_{\substack{(d,p,q)\in\ZZ^3\\0\le d\le \min\{m,n\}\\ n-d\le p\le n+m-d-r \\ 
        m-d\le q\le n+m-d-r}} \sum_{\mu\vdash (m+n-d-\max\{p,q\})}\{s_\mu\cdot h_{\max\{p,q\}-m+d}\}_{\lambda_1 = p}\otimes\{s_\mu\cdot h_{\max\{p,q\}-n+d}\}_{\lambda_1 = q} \\
        \label{exp:sum}=&\sum_{r^\prime =r}^{\min\{m,n\}}\!\sum_{\substack{(d,p,q)\in\ZZ^3\\0\le d\le\min\{m,n\}\\p\ge n-d\\q\ge m-d\\n+m-d-\max\{p,q\}=r^\prime}} \!\sum_{\mu\vdash (m+n-d-\max\{p,q\})}\!\{s_\mu\cdot h_{\max\{p,q\}-m+d}\}_{\lambda_1 = p}\otimes\{s_\mu\cdot h_{\max\{p,q\}-n+d}\}_{\lambda_1 = q}
    \end{align}
    where the second equal sign arises from Lemma~\ref{lem:schur-interchange}, and the last equal sign arises from the following two facts:
    \begin{itemize}
    \item $p,q\le n+m-d-r$ if and only if $n+m-d-\max\{p,q\}\ge r$, and
    \item $p\ge n-d$ and $q\ge m-d$ together imply that $n+m-d-\max\{p,q\}\le\min\{m,n\}$.
    \end{itemize}
    Note that we can eliminate $d$ in the expression \eqref{exp:sum} using the condition $n+m-d-\max\{p,q\}=r^\prime$ under the summation, i.e., we can substitute $d=n+m-\max\{p,q\}-r^\prime$ in the expression \eqref{exp:sum}. Therefore, the left-hand side of Equation~\eqref{eq:schur-sum} equals
    \begin{align}\label{exp:no-d}
        &\sum_{r^\prime =r}^{\min\{m,n\}}\!\sum_{\substack{(p,q)\in\ZZ^2\\0\le n+m-\max\{p,q\}-r^\prime\le\min\{m,n\}\\p\ge \max\{p,q\}+r^\prime-m\\q\ge \max\{p,q\}+r^\prime-n\\}} \!\sum_{\mu\vdash r^\prime}\!\{s_\mu\cdot h_{n-r^\prime}\}_{\lambda_1 = p}\otimes\{s_\mu\cdot h_{m-r^\prime}\}_{\lambda_1 = q}.
    \end{align}
    \textbf{Claim}: All three inequalities under the second summation in \eqref{exp:no-d} can be discarded. 
    
    In fact, for any non-empty term $\{s_\mu\cdot h_{n-r^\prime}\}_{\lambda_1 = p}\otimes\{s_\mu\cdot h_{m-r^\prime}\}_{\lambda_1 = q}$ in \eqref{exp:no-d}, Pieri's rule yields that 
    \begin{equation}\label{ineq1}
    \max\{n-r^\prime,\mu_1\}\le p\le\mu_1+n-r^\prime\,\,\,(\le n)
    \end{equation} and 
    \begin{equation}\label{ineq2}
    \max\{m-r^\prime,\mu_1\}\le q\le\mu_1+m-r^\prime\,\,\,(\le m).
    \end{equation} Then 
    \begin{equation}\label{ineq3}
    \max\{m,n\}-r^\prime\le\max\{p,q\}\le\max\{m,n\}.
    \end{equation}
    Therefore, $0\le\min\{m,n\}-r^\prime=n+m-\max\{m,n\}-r^\prime\le n+m-\max\{p,q\}-r^\prime\le n+m-(\max\{m,n\}-r^\prime)-r^\prime=\min\{m,n\}$, so we have $0\le n+m-\max\{p,q\}-r^\prime\le\min\{m,n\}$ and hence the first equality under the summation in \eqref{exp:no-d} can be discard. Now we suppose that $p\le q$ without loss of generality. Then \eqref{ineq2} implies that $\max\{p,q\}+r^\prime-m=q+r^\prime-m\le\mu_1$, and \eqref{ineq1} implies that $p\ge\mu_1$. Consequently, we have $p\ge\max\{p,q\}+r^\prime-m$, so we can discard the second inequality under the summation of \eqref{exp:no-d}. Additionally, $\max\{p,q\}+r^\prime-n=q+r^\prime-n\le q$ because $r^\prime\le\min\{m,n\}$. Thus, we can discard the last inequality under the summation of \eqref{exp:no-d}. Therefore, the claim above holds.

    By this claim, the expression \eqref{exp:no-d} equals the right-hand side of Equation~\eqref{eq:schur-sum}, finishing the proof.
\end{proof}

Recall that we have studied the module structure of $\CC[\xxx_{n\times m}]/I_{n,m}$ in Section~\ref{subsec: r=0} and note that $I_{n,m}\subseteq I_{n,m,r}$. This observation reveals an $\symm_n\times\symm_m$-equivariant surjection $\CC[\xxx_{n\times m}]/I_{n,m}\twoheadrightarrow\CC[\xxx_{n\times m}]/I_{n,m,r}$. Thanks to this surjection, we finally obtain the graded $\symm_n\times\symm_n$-module structure of $R(\UZ_{n,m,r}) = \CC[\xxx_{n\times m}]/I_{n,m,r}$:
\begin{theorem}\label{thm:module-str}
    \[\grFrob(\CC[\xxx_{n\times m}]/I_{n,m,r};q) = \sum_{d=0}^{\min\{m,n\}}q^d\cdot\Bigg\{\sum_{\mu\vdash d}(s_\mu\cdot h_{n-d})\otimes(s_\mu\cdot h_{m-d})\Bigg\}_{\lambda_1\le n+m-d-r}\]
\end{theorem}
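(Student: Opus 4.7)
\emph{Plan of proof.} The strategy is to obtain a Schur-positive upper bound on the degree-$d$ piece of $\grFrob(\CC[\xxx_{n\times m}]/I_{n,m,r};q)$ by combining the $r=0$ module structure with the symmetrizer annihilation from Section~\ref{subsec:ideal-str}, and then to promote the term-by-term inequality to an equality via a single total-dimension calculation supplied by Corollary~\ref{cor:schur-sum-eq}.

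Since $I_{n,m}=I_{n,m,0}\subseteq I_{n,m,r}$ and both ideals are homogeneous and $\symm_n\times\symm_m$-stable, there is a graded $\symm_n\times\symm_m$-equivariant surjection $\CC[\xxx_{n\times m}]/I_{n,m}\twoheadrightarrow\CC[\xxx_{n\times m}]/I_{n,m,r}$. Combined with Proposition~\ref{prop:toy-module-str}, this yields, in each degree $d$,
\[\Frob\big((\CC[\xxx_{n\times m}]/I_{n,m,r})_d\big)\leq\sum_{\mu\vdash d}(s_\mu\cdot h_{n-d})\otimes(s_\mu\cdot h_{m-d}).\]
For any subset $S\subseteq[n]$ with $|S|>n+m-d-r$, Corollary~\ref{cor:ann-of-I_{n,m,r}-extend} shows that $\sum_{w\in\symm_S}(w,1)$ kills each class $\mmm(\RRR)$ for $\RRR\in\ZZZ_{n,m,d}$, and by Lemma~\ref{lem:rook-span} these classes span the degree-$d$ component, so the symmetrizer annihilates the whole component. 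Lemma~\ref{lem:ann-length} then forces every irreducible $V^{\lambda^{(1)}}\otimes V^{\lambda^{(2)}}$ occurring in that component to satisfy $\lambda^{(1)}_1\leq n+m-d-r$; the parallel argument with the $\symm_m$-symmetrizer gives $\lambda^{(2)}_1\leq n+m-d-r$. Intersecting with the previous inequality sharpens the bound to
\[\Frob\big((\CC[\xxx_{n\times m}]/I_{n,m,r})_d\big)\leq\Bigg\{\sum_{\mu\vdash d}(s_\mu\cdot h_{n-d})\otimes(s_\mu\cdot h_{m-d})\Bigg\}_{\lambda_1\leq n+m-d-r}.\]

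To upgrade these term-by-term inequalities to equalities, I would sum over $d$. The orbit-harmonics isomorphism \eqref{eq: orbit-harmmonics} together with Equation~\eqref{eq:graded-component-module-str=rook} applied to each slice $\ZZZ_{n,m,r'}$ gives
\[\sum_d\Frob\big((\CC[\xxx_{n\times m}]/I_{n,m,r})_d\big)=\Frob(\CC[\UZ_{n,m,r}])=\sum_{r'=r}^{\min\{m,n\}}\sum_{\mu\vdash r'}(s_\mu\cdot h_{n-r'})\otimes(s_\mu\cdot h_{m-r'}),\]
and Corollary~\ref{cor:schur-sum-eq} identifies this very expression with the total $\sum_d\{\sum_{\mu\vdash d}(s_\mu\cdot h_{n-d})\otimes(s_\mu\cdot h_{m-d})\}_{\lambda_1\leq n+m-d-r}$ of the refined upper bounds. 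Hence the coefficient of each $s_{\lambda^{(1)}}\otimes s_{\lambda^{(2)}}$ in the difference is a polynomial in $q$ with nonnegative coefficients whose values at $q=1$ sum to zero, so the difference vanishes in every degree. The hard technical content — the annihilation result of Corollary~\ref{cor:ann-of-I_{n,m,r}-extend} and the nontrivial combinatorial identity of Corollary~\ref{cor:schur-sum-eq} — is already in hand; the remaining work is just this Schur-positivity squeeze.
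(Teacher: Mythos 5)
Your proposal is correct and follows essentially the same route as the paper: the surjection from the $r=0$ quotient gives the untruncated upper bound, Corollary~\ref{cor:ann-of-I_{n,m,r}-extend} together with Lemma~\ref{lem:ann-length} imposes the $\lambda_1\le n+m-d-r$ truncation, and Corollary~\ref{cor:schur-sum-eq} plus the orbit-harmonics identification $\CC[\xxx_{n\times m}]/I_{n,m,r}\cong\CC[\UZ_{n,m,r}]$ (which rests on Theorem~\ref{thm:basis}) forces the Schur-positive difference to vanish. The only addition worth making explicit is the citation of Theorem~\ref{thm:basis} when invoking the isomorphism with $\CC[\UZ_{n,m,r}]$; otherwise the argument matches the paper's proof step for step.
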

\begin{proof}
    The containment $I_{n,m}\subseteq I_{n,m,r}$ implies an $\symm_n\times\symm_m$-equivariant surjection
    \[\CC[\xxx_{n\times m}]/I_{n,m}\twoheadrightarrow\CC[\xxx_{n\times m}]/I_{n,m,r},\]
    which, together with Proposition~\ref{prop:toy-module-str}, means that
    \[\grFrob(\CC[\xxx_{n\times m}]/I_{n,m,r};q)\le\grFrob(\CC[\xxx_{n\times m}])/I_{n,m};q)=\sum_{d=0}^{\min\{m,n\}}q^d\cdot\sum_{\mu\vdash d}(s_\mu\cdot h_{n-d})\otimes(s_\mu\cdot h_{m-d}).\]
    Then Lemma~\ref{lem:ann-length} and Corollary~\ref{cor:ann-of-I_{n,m,r}-extend} implies that
    \begin{equation}\label{eq:graded-module-str-final}
        \grFrob(\CC[\xxx_{n\times m}]/I_{n,m,r};q)\le\sum_{d=0}^{\min\{m,n\}}q^d\cdot\Bigg\{\sum_{\mu\vdash d}(s_\mu\cdot h_{n-d})\otimes(s_\mu\cdot h_{m-d})\Bigg\}_{\lambda_1\le n+m-d-r}.
    \end{equation}
    However, Inequality~\eqref{eq:graded-module-str-final} implies that
    \begin{align}\label{eq:force-equal}
        \Frob(\CC[\xxx_{n\times m}]/I_{n,m,r})&\le\sum_{d=0}^{\min\{m,n\}}\Bigg\{\sum_{\mu\vdash d}(s_\mu\cdot h_{n-d})\otimes(s_\mu\cdot h_{m-d})\Bigg\}_{\lambda_1\le n+m-d-r} \\ \nonumber
        &=\sum_{r^\prime =r}^{\min\{m,n\}}\sum_{\mu\vdash r^\prime}(s_\mu\cdot h_{n-r^\prime})\otimes(s_\mu\otimes h_{m-r^\prime}) \\
        \nonumber
        &=\Frob\Bigg(\bigoplus_{r^\prime=r}^{\min\{m,n\}}\CC[\ZZZ_{n,m,r^\prime}]\Bigg) =\Frob(\CC[\UZ_{n,m,r}])   
    \end{align}
    where the first equal sign derives from Corollary~\ref{cor:schur-sum-eq}, and the second equal sign arises from Equation~\eqref{eq:graded-component-module-str=rook}. Now Theorem~\ref{thm:basis} tells us $R(\UZ_{n,m,r}) = \CC[\xxx_{n\times m}]/I_{n,m,r}$, so orbit harmonics yields an $\symm_n\times\symm_m$-module isomorphism 
    \[\CC[\xxx_{n\times m}]/I_{n,m,r} \cong \CC[\UZ_{n,m,r}],\]
    forcing all the inequalities in \eqref{eq:force-equal} to be equalities, and hence forcing Inequality~\eqref{eq:graded-module-str-final} to be the equality that we desire.
\end{proof}

\section{Conclusion}\label{sec:conclusion}
We provide an open problem regarding log-concavity. Recall that a sequence of positive real numbers $(a_1,a_2,\dots,a_N)$ is \emph{log-concave} if for all $1<i<N$ we have $a_i^2 \ge a_{i-1}\cdot a_{i+1}$. Chen conjectured \cite{chen2008logconcavityqlogconvexityconjectureslongest} that the sequence $(a_{n,1},a_{n,2},\dots,a_{n,n})$ is log-concave where $a_{n,k}$ is the number of permutations in $\symm_n$ with longest subsequence of length $k$. Using the terminology of orbit harmonics, Chen's conjecture means that the Hilbert series of the orbit harmonics ring $R(\symm_n)$ of the permutation matrix locus $\symm_n$ is log-concave.

We can generalize the definition of log-concavity by incorporating group action. Let $G$ be a group and $(V_1,V_2,\dots, V_N)$ is a sequence of $G$-modules. We say that $(V_1,V_2,\dots,V_N)$ is \emph{$G$-equivariant log-concave} if for all $1<i<N$ we have a $G$-module surjection $V_i\otimes V_i\twoheadrightarrow V_{i-1}\otimes V_{i+1}$ where the $G$-action is diagonal, i.e., $g\cdot(v\otimes w) = (g\cdot v)\otimes(g\cdot w)$. A graded $G$-module $V = \bigoplus_{d=0}^N V_d$ is \emph{$G$-equivariant log-concave} if the sequence $(V_1,V_2,\dots,V_N)$ is $G$-equivariant log-concave. Rhoades conjectured \cite{rhoades2024increasing} that $R(\symm_n)$ is $\symm_n\times\symm_n$-equivariant log-concave, which generalizes Chen's conjecture.

Our locus $\UZ_{n,m,r}$ coincides with the permutation matrix locus $\symm_n$ visited by Rhoades if $n=m=r$. Therefore, we raise the following conjecture, which further generalizes Rhoades's conjecture.
\begin{conjecture}\label{conj:log-concave}
    The graded $\symm_n\times\symm_m$-module $R(\UZ_{n,m,r})$ is $\symm_n\times\symm_m$-log-concave.
\end{conjecture}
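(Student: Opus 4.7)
The plan is to split the conjecture into a character-level inequality and a subsequent module-level lift. At the character level, Theorem~\ref{thm:module-str} gives an explicit formula for $\Frob(V_d)$, where $V_d$ denotes the degree-$d$ graded piece of $R(\UZ_{n,m,r})$, so the necessary Schur-positivity statement
\[\Frob(V_d)^2 - \Frob(V_{d-1}) \cdot \Frob(V_{d+1}) \geq 0\]
in $\Lambda \otimes_{\CC(q)} \Lambda$ becomes a concrete symmetric function inequality. Once this character-level claim is established, it must then be promoted to the actual $\symm_n\times\symm_m$-equivariant surjection $\phi \colon V_d\otimes V_d \twoheadrightarrow V_{d-1}\otimes V_{d+1}$ demanded by the conjecture.

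For the character inequality, first expand $\Frob(V_d)$ using Pieri's rule and the coefficient formula in Lemma~\ref{lem:coef} to rewrite the target as a signed sum indexed by quadruples of partitions. Applying the Littlewood--Richardson rule twice turns this into a signed count of quadruples of Yamanouchi skew tableaux. The natural approach is then to produce a sign-reversing injection from the negative contributions into the positive contributions, compatible with the truncation $\{-\}_{\lambda_1\le n+m-d-r}$. In the untruncated $r=0$ case, one may hope to adapt known Schur log-concavity results, such as those arising from RSK on pairs of semistandard Young tableaux or from Lam--Postnikov--Pylyavskyy-style Schur-positivity manipulations.

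To lift the character inequality to an actual module surjection, the most natural candidate is built from the multiplicative structure of $R(\UZ_{n,m,r})$. Using the shadow monomial basis of Theorem~\ref{thm:basis}, $V_d$ is the span of $\es(\RRR)$ over $\RRR\in\UZ_{n,m,r}$ with $\mathrm{lis}(\ex(\RRR)) = n+m-r-d$. One can then attempt to define $\phi$ combinatorially on pairs $(\RRR_1,\RRR_2)$ by an operation that removes a rook from one placement and inserts it into the other, producing a pair $(\RRR_1',\RRR_2')$ whose extended shadow sets have the required sizes. Equivariance is automatic from such a construction, and surjectivity must be checked by analyzing the induced map on shadow lines.

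The main obstacle is this second stage: the truncation $\{-\}_{\lambda_1\le n+m-d-r}$ encodes the row- and column-sum relations in $I_{n,m,r}$, and any candidate $\phi$ must respect those relations, so the combinatorial construction sketched above may fail to be well-defined modulo $I_{n,m,r}$ without substantial modification. Moreover, the specialization $n=m=r$ recovers Rhoades's open conjecture~\cite{rhoades2024increasing} for the permutation matrix locus $\symm_n$, so an unconditional proof is not expected without genuinely new ideas. A more realistic first step would be to verify the purely numerical log-concavity of $\Hilb(R(\UZ_{n,m,r});q)$, refining Chen's conjecture~\cite{chen2008logconcavityqlogconvexityconjectureslongest}, and then to tackle the equivariant refinement case by case for small $r$.
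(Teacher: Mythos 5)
This statement is a conjecture: the paper offers no proof, only computational verification for $n\le 8$, $m\le 10$, and it explicitly notes that the case $n=m=r$ is Rhoades's open conjecture for $R(\symm_n)$. You correctly recognize this, and your text is a research plan rather than a proof, so there is nothing to certify as correct. Still, two points in the plan deserve correction.

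First, the character-level inequality you write down is not the right one. The conjecture asks for a surjection $V_d\otimes V_d\twoheadrightarrow V_{d-1}\otimes V_{d+1}$ with the \emph{diagonal} $\symm_n\times\symm_m$-action, and the Frobenius image of a diagonal tensor product is the Kronecker (internal) product of the Frobenius images, not the ordinary (induction) product. So the relevant positivity statement is
\[
\Frob(V_d)\ast\Frob(V_d)-\Frob(V_{d-1})\ast\Frob(V_{d+1})\ \ge\ 0
\]
in the Kronecker sense, and neither Pieri's rule nor the Littlewood--Richardson rule computes these coefficients; there is no known combinatorial rule for Kronecker coefficients, which is precisely why this family of conjectures is hard. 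Your proposed reduction to ``a signed count of quadruples of Yamanouchi skew tableaux'' therefore does not get off the ground. Second, your ``lifting'' stage is not a genuine separate obstacle: since $\CC[\symm_n\times\symm_m]$ is semisimple, an equivariant surjection $A\twoheadrightarrow B$ exists if and only if every irreducible occurs in $B$ with multiplicity at most its multiplicity in $A$, so the module-level statement is \emph{equivalent} to the (Kronecker) character inequality, and no multiplicative or combinatorial construction of $\phi$ is needed once that inequality is known. Your closing assessment --- that an unconditional proof would require genuinely new ideas and that the Hilbert-series version refining Chen's conjecture is a more realistic first target --- is consistent with the paper's own framing.
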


Conjecture~\ref{conj:log-concave} has been checked by coding for $n\le 8,m\le 10$. The more flexibility provided by Conjecture~\ref{conj:log-concave} than Rhoades's conjecture, such as three parameters $n,m,r$ rather than $n$ itself, may permit more potential induction strategies and make the proof of log-concavity easier.

Another interesting future direction is to study the orbit harmonics ring $R(\ZZZ_{n,m,r})$. Since $\ZZZ_{n,m,r}\subseteq\UZ_{n,m,r}$, we need to add some elements to the generating set in Definition~\ref{def:ideal} to obtain the defining ideal $\gr \, \II(\ZZZ_{n,m,r})$, such as $\sum_{i=1}^n \sum_{j=1}^m x_{i,j}$ and $\mmm(\RRR)$ for $\RRR\in\UZ_{n,m,r+1}$. Some relevant results are presented in Zhu \cite{zhu2025rookplacementsorbitharmonics} focusing on the graded module structure.

Finally, we may also consider extending the results to colored rook placement loci. Let $N,k$ be positive integers, the \emph{$k$-colored permutation group} $\symm_{N,k}$ is the wreath product $(\ZZ/r\ZZ)\wr\symm_N$ which can be embedded into $\Mat_{N\times N}(\CC)$ by
\[\symm_{N,k} = \Big\{X\in\Mat_{N\times N}(\CC)\,:\,\begin{array}{cc}
     & \text{$X$ has exactly one nonzero entry in each row and column,} \\
     & \text{and nonzero entries of $X$ are $k$-th roots-of-unity}
\end{array}\Big\}.\]
Define the \emph{$k$-colored rook placement loci} $\ZZZ_{n,m,r}^{(k)}$ by
\[\ZZZ_{n,m,r}^{(k)}\coloneqq\Bigg\{X\in\Mat_{n\times m}(\CC)\,:\,\begin{array}{cc}
     & \text{$X$ has at most one nonzero entry in each row and column,} \\
     & \text{nonzero entries of $X$ are $k$-th roots-of-unity,} \\
     & \text{and $X$ possesses exactly $r$ nonzero entries}
\end{array}\Bigg\}\]
and let
\[\UZ_{n,m,r}^{(k)}\coloneqq\bigsqcup_{r^\prime =r}^{\min\{m,n\}}\ZZZ_{n,m,r^\prime}^{(k)}.\]
\begin{problem}\label{problem:colored}
    Find the graded $\symm_{n,k}\times\symm_{m,k}$-module structures of $R(\ZZZ_{n,m,r}^{(k)})$ and $R(\UZ_{n,m,r}^{(k)})$.
\end{problem}

\section{Acknowledgements}\label{sec:acknowledgements}
The authors are thankful to Brendon Rhoades for constructive suggestions about the content and structure of this paper.

\printbibliography

\end{document}